\newcommand{\Z}{\mathbb{Z}}
\newcommand{\R}{\mathbb{R}}
\newcommand{\con}{\nabla}
\newcommand{\C}{\mathbb{C}}
\newcommand{\E}{\mathbb{E}}
\newcommand{\mc}{\mathcal}
\newcommand{\mb}{\mathbb}
\newcommand{\eps}{\varepsilon}
\DeclareMathOperator{\Var}{Var}
\newcommand{\Pro}{\mathbb{P}}
\newcommand{\norme}[1]{\left \| #1 \right \|}
\newcommand{\abs}[1]{\left\lvert#1\right\rvert}
\newcommand{\vertiii}[1]{{\left\vert\kern-0.25ex\left\vert\kern-0.25ex\left\vert #1 
    \right\vert\kern-0.25ex\right\vert\kern-0.25ex\right\vert}}
\DeclareMathOperator{\diam}{diam}
\DeclareMathOperator{\osc}{osc}
\newtheorem{thm}{Theorem}
\newtheorem{Prop}[thm]{Proposition}
\newtheorem{Lem}[thm]{Lemma}
\newtheorem{Cor}[thm]{Corollary}
\newtheorem{Rem}[thm]{Remark}
\def\XXint#1#2#3{{\setbox0=\hbox{$#1{#2#3}{\int}$ }
\vcenter{\hbox{$#2#3$ }}\kern-.6\wd0}}
\title{Tightness of Liouville first passage percolation for $\gamma \in (0,2)$}
\author{Jian Ding \thanks{Department of Statistics, The Wharton School, University of Pennsylvania,  Philadelphia, PA 19104, USA. Partially supported by NSF grant DMS-1757479 and an Alfred Sloan fellowship.}  \and Julien Dub\'edat \thanks{Department of Mathematics, Columbia University, New York, NY 10027, USA. Partially supported by NSF grant DMS-1512853.} \and Alexander Dunlap \thanks{Department of Mathematics, Stanford University, Stanford, CA 94305, USA. Partially supported by an NSF Graduate Research Fellowship. Current address: Department of Mathematics, Courant Institute of Mathematical Sciences, New York University, New York, NY 10012, USA.} \and Hugo Falconet \thanks{Department of Mathematics, Columbia University, New York, NY 10027, USA.}}
\begin{document}

\maketitle

\begin{abstract}
We study Liouville first passage percolation metrics associated to a Gaussian free field $h$ mollified by the two-dimensional heat kernel $p_t$ in the bulk, and related star-scale invariant metrics. For $\gamma \in (0,2)$ and $\xi = \frac{\gamma}{d_{\gamma}}$, where $d_{\gamma}$ is the Liouville quantum gravity dimension defined in \cite{DG18}, we show that renormalized metrics $(\lambda_t^{-1} e^{
\xi p_t * h} ds)_{t \in (0,1)}$ are tight with respect to the uniform topology. We also show that subsequential limits are bi-H\"older with respect to the Euclidean metric, obtain tail estimates for side-to-side distances, and derive error bounds for the normalizing constants $\lambda_t$.
 \end{abstract}

\tableofcontents

\section{Introduction and main statement}

We consider the problem of rigorously constructing a metric for \emph{Liouville quantum gravity} (LQG), a random geometry formally given by reweighting Euclidean space by $e^{\gamma h}$, where $h$ is a Gaussian free field. LQG was originally introduced in the physics literature by Polyakov in 1981 \cite{P81}. In its mathematical form, the LQG measure is a special case of \emph{Gaussian multiplicative chaos}, introduced in \cite{Kahane85}.  In the last two decades, there has been an explosion of interest in the probability community towards rigorously constructing the relevant objects. In particular, the LQG measure was constructed rigorously in the regime $\gamma\le 2$, via a renormalization procedure, in \cite{DS11}. Other relevant work in this area includes \cite{RoV10,RV11,RV14,Shamov16,Berestycki17,RV17,APS18}.

Much remains open regarding the construction of the LQG metric. When $\gamma = \sqrt{8/3}$, LQG is intimately connected with the Brownian map \cite{LeGall10,LeGall13,Miermont13} and a metric for LQG has been constructed in \cite{MS15b,MS16a,MS16b}. Substantial work has also been devoted to understanding the distance exponents for natural discrete LQG metrics; see \cite{DZ16,DGo18,GHS16,GHS17,DG18,A19}. In \cite{DZ15,DZZ17} some non-universality results were established for first-passage percolation distance exponents for metrics of the form $e^{\gamma \phi_\delta}ds$, where $\phi_\delta$ is discretization of a log-correlated Gaussian field. This indicates that precisely understanding such exponents must involve rather fine information about the structure of the particular field in question.

The present study concerns the tightness of Liouville first-passage percolation (LFPP) metrics, which are natural smoothed LQG metrics. This proves the existence of subsequential limiting metrics. Given this, it remains to show that such limiting metrics are unique in law for each $\gamma\in(0,2)$ in order to complete the construction of the LQG metric in this regime. After this paper was posted, the latter task was carried out in the series of works \cite{GM19c,DFGPS19,GM19conf,GM19b,GM19existence}, thus completing the construction. The present study follows three main tightness results for discretized or smoothed LQG metrics. In \cite{DD16}, tightness of LFPP metrics (on a discrete lattice) was proved in the small noise regime for which $\gamma$ is very small. In \cite{DF18}, tightness was shown for metrics arising in the same way from $\star$-scale invariant fields, still in the small noise regime. In \cite{DD18}, tightness was shown for all $\gamma<2$ for the Liouville graph distance, which is a graph metric equal to the least number of Euclidean balls of a given LQG measure necessary to cover a path between a pair of points.

We consider a smoothed Gaussian field
\begin{equation}\label{eq:phideltadef}
\phi_{\delta}(x) := \sqrt{\pi} \int_{\delta^2}^1 \int_{\R^2} p_{\frac{t}{2}}(x-y) W(dy,dt)
\end{equation}
for $x\in\R^2$ and  $\delta \in (0,1)$, where $p_t(x-y) := \frac{1}{2\pi t}e^{-\frac{|x-y|^2}{2t}}$ and $W$ is a space-time white noise. This approximation is natural since it can be uniformly compared on a compact domain with a Gaussian free field $h$ mollified by the heat kernel defined on a slightly larger domain, viz. $\phi_{\sqrt{t}}$ and $p_{t/2} * h$ (where $*$ denotes the convolution operator) are comparable. Furthermore, this approximation provides some nice invariance and scaling properties on the full plane.

For $\gamma \in (0,2)$, we will use the notation 
\begin{equation}
\label{def:Exponent}
\xi := \gamma/d_{\gamma}
\end{equation} where $d_{\gamma}$ is the ``Liouville quantum gravity dimension" defined in \cite{DG18}. It is known (see Theorem 1.2 and Proposition 1.7 in \cite{DG18}) that the function $\gamma \mapsto \gamma/d_{\gamma}$ is strictly increasing and continuous on $(0,2)$. Therefore, in this article we will be interested in the range $\xi \in (0,(2/d_2)^-)$, where $(2/d_2)^-=\lim_{\gamma\uparrow 2}\gamma/d_\gamma$. 

We consider the length metric  $e^{\xi \phi_{\delta}} ds$ (equivalently, the metric whose Riemannian metric tensor is given by $e^{2 \xi \phi_{\delta}} ds^2$), restricted to the unit square $[0,1]^2$. We recall that a length metric is a metric such that the distance between two points is given by the infimum over the arc lengths of paths connecting the two points. We denote by $\lambda_{\delta}$ the median of the left-right distance of $[0,1]^2$ for the metric $e^{\xi \phi_{\delta}} ds$. Our main theorem is the following.

\begin{thm}
\label{thm:MainTheorem}
\begin{enumerate}\item If $\gamma \in (0,2)$, then $\left( \lambda_{\delta}^{-1} e^{\xi \phi_{\delta}} ds \right)_{\delta \in (0,1)}$ is tight with respect to the uniform topology on the space of continuous functions $[0,1]^2 \times [0,1]^2\to\R^+$. Furthermore, any subsequential limit is almost surely bi-H\"older with respect to the Euclidean metric on $[0,1]^2$. 
\item Let $K=[0,1]^2$. If $h$ is a Gaussian free field with zero boundary conditions on a bounded open domain $D$ containing $K$ (extended to zero outside of $D$), then the internal metrics  $(\lambda_{\sqrt{\delta}}^{-1} e^{\xi p_{\frac{\delta}{2}} * h} ds)_{\delta \in (0,1)}$ on $K$ are tight with respect to the uniform topology of continuous functions $K\times K\to \R^+$.
\end{enumerate}
Furthermore, the normalizing constants $(\lambda_{\delta})_{\delta \in (0,1)}$ satisfy
\begin{equation}
\label{eq:BornesExpo}
\lambda_{\delta} = \delta^{1-\xi Q} e^{O\left( \sqrt{| \log \delta |} \right)}
\end{equation}
where $Q = \frac{2}{\gamma} + \frac{\gamma}{2}$. 
\end{thm}

A year after our article was posted, the subsequent work \cite{DG20} proved a similar result to ours when $\xi\ge (2/d_2)^-$. However, in that case the tightness does not hold in the uniform topology and the Beer topology on lower semicontinuous functions was used.

In order to establish the tightness of the family of renormalized metrics $ (d_{\phi_{\delta}})_{\delta \in (0,1)} := (\lambda_{\delta}^{-1} e^{\xi 
\phi_{\delta}} ds)_{\delta \in (0,1)}$, we prove a number of uniform estimates for that family (which also hold when the approximation is the GFF mollified by the heat kernel). Such estimates that are closed under weak convergence also apply to subsequential limits. Let us summarize these properties. Let ${\mathcal D}$ denote the family of laws of $d_{\phi_{\delta}}$, $\delta\in (0,1)$ (i.e.\ seen as random continuous functions on $([0,1]^2)^2$), and $\overline{\mathcal D}$ denotes its closure under weak convergence (i.e., $\overline{\mathcal D}$ also includes the laws of all subsequential limits).  

\begin{enumerate}
\item Under any $\Pro\in\overline{\mathcal D}$, $d$ is $\Pro$-a.s.\ a length metric. This is clear for the renormalized metrics $d_{\phi_\delta}$ by definition, and the property of being a length metric extends to limits. (See \cite[Exercise~2.4.19]{BBI}.)

\item 
If $d$ is a metric on $\R^2$ and $R$ is a rectangle, we denote by $d(R)$ the left-right length of $R$ for $d$. We have the following tail estimates. There exists $c, C > 0$ such that for $s > 2$, uniformly in $\Pro\in\overline{\mathcal D}$ we have
\begin{align}
& c e^{-C s^2} \leq  \Pro \left( d(R)  \leq e^{-s} \right) \leq C e^{-c s^2},\label{eq:lowertailbd} \\
& c e^{-C s^2} \leq \Pro \left( d(R) \geq e^s \right) \leq C e^{-c \frac{s^2}{\log s}}.\label{eq:uppertailbd}
\end{align}
The upper bounds are proved in Section~\ref{Sec:TailEstimates}, while the lower bounds are consequences of the Cameron--Martin theorem, considering shifts of the field at the coarsest scale as in \cite[Section~5.4]{DF18}. 

\item
If $d$ is a metric on $\R^2$ and $R$ is a rectangle, we denote by $\mathrm{Diam}(R,d)$ the diameter of $R$ for $d$. We have the following uniform first moment bound:
\begin{equation}
\sup_{\Pro\in\overline{\mathcal D}} \E \left( \mathrm{Diam}(R,d) \right) < \infty.
\end{equation}
This is shown in the course of the proof of Proposition~\ref{pro:diam} below.

\item
Under any $\Pro\in\overline{\mathcal D}$, $d$ is $\Pro$-a.s. bi-H\"older with respect to the Euclidean metric and we have the following bounds for exponents: for $\alpha < \xi (Q-2)$, $\beta > \xi (Q+2)$, and $R$ a rectangle, the families
\begin{equation}
\left( \sup_{x,x' \in R} \frac{|x-x'|^{\alpha}}{d(x,x')}  \right)_{{\mathcal L}(d)\in\overline{\mathcal D}} \quad \text{and}  \quad
\left( \sup_{x,x' \in R} \frac{d(x,x')}{|x-x'|^{\beta}} \right)_{{\mathcal L}(d)\in\overline{\mathcal D}}
\end{equation}
are tight. Here $\mathcal{L}(d)$ means the law of $d$. These properties are shown in Proposition~\ref{Prop:Holder} below.
\end{enumerate}

Let us also mention that subsequential limits are consistent with the Weyl scaling: for a function $f$ in the Cameron-Martin space of the Gaussian free field $h$, for any coupling $(h,d)$ associated to a subsequential limit of the sequence of laws of $((h, \lambda_{\sqrt{\delta}}^{-1} e^{\xi p_{\frac{\delta}{2}} * h} ds))_{\delta>0}$, the couplings $(h,d)$ and $(h+f,e^{\xi f} \cdot d)$ are mutually absolutely continuous with respect to each other and the associated Radon-Nikod\'ym derivative is the one of the first marginal. This can be proved using similar arguments to those of \cite[Section~7]{DF18}. An analogue of this property for the Liouville measure together with the  conservation of the Liouville volume average is enough to characterize the Liouville measure, as seen by Shamov in \cite{Shamov16}.

It may be interesting to draw a parallel between our work and those in random planar maps, since both aim to obtain the scaling limits of random metrics and the limiting objects are related for $\gamma = \sqrt{8/3}$. We start with a brief overview on the convergence of random planar maps. Chassaing and Schaeffer \cite{CS04} identified $n^{1/4}$  as the proper scaling and compute certain limiting functionals for random quadrangulations. Marckert and Mokkadem \cite{MM06} established limit theorems (in a sense weaker than Gromov-Hausdorff) and introduced the Brownian map. Le Gall \cite{LeGall07} showed tightness for rescaled $2p$-angulations in the Gromov-Hausdorff topology and shows that the limiting topology is the same as the Brownian map. Le Gall and Paulin \cite{LeGallPaulin} showed that the limiting topology is that of the 2-sphere and Le Gall studied properties of geodesics in \cite{LeGall10}.  Finally, Le Gall \cite{LeGall13} (resp. Miermont \cite{Miermont13}) proved the uniqueness of subsequential limits of uniform triangulations and $2p$-angulations (resp. quadrangulations). In both cases, the proof relied on a careful study of geodesics and in particular on confluence properties, together with a rough bound quantifying an approximate equivalence of the two metrics to match.

In our framework, an important result was obtained in \cite{DG18}, where the authors identified the exponent of LFPP distances to be $1 -\xi Q + o(1)$. By contrast, in the random planar map setting, the normalization is exactly $n^{1/4}$ and Chassaing and Schaeffer \cite{CS04} obtained the convergence in law of some observables. In our case, the tightness of any observable renormalized by its median is far from obvious. This is a common feature of some concentration problems for extrema of random fields. As an analogy, one can consider the problem of the tightness of the maxima of branching random walks (BRW), where, also by subadditivity, the expected value of the maxima of BRW on a $d$-ary tree at level $n$ is of order $(x^{*} + o(1))n$ for some $x^{*}$ (which depends on the rate function of the distribution of the increments). A powerful and well-understood method in proving tightness for BRW is by an explicit truncated second moment estimate which computes the expected maxima up to additive $O(1)$ constant (see \cite{BDZbrw} for maxima of BRW and \cite{BDZgff} for maxima of discrete GFF). In contrast, in our setup, explicit computation on the distance seems really difficult; in fact it remains a major challenge to compute the value of the distance exponent, let alone computing the distance up to constant. In order to circumvent this difficulty, we had to build our proof by exploring delicate intrinsic structure of the distance. We point out here that it was shown in \cite{A19} that for $\gamma \in (0,2)$, $1-\xi Q \geq 0$ (and it is believed to be $>0$), therefore  the normalization $\lambda_{\delta}$ should be thought as small.

Furthermore, in our setting where the metrics are on a compact subset of $\C$, we can directly use the uniform topology instead of working with the Gromov-Hausdorff topology (note that the former is stronger than the latter). In this paper, we show tightness for the full subcritical range $\gamma \in (0,2)$ of renormalized side-to-side crossing lengths, point-to-point distance and metrics. Limiting metrics are bi-H\"older with respect to the Euclidean metric.

\subsection{Strategy of the proof and comparison with previous works}

In contrast with previous works on the LQG measure, the variational  problem defining the LQG metric means that most direct computations are impossible, and in particular most of techniques used in the theory of Gaussian multiplicative chaos and LQG measure are unavailable. This necessitates the more intricate multiscale geometric arguments that we employ.

Our tightness proof relies on two key ingredients, a Russo-Seymour-Welsh argument and multiscale analysis. In both parts we extend and refine many arguments used in the previous works \cite{DD16,DF18,DD18} on the tightness of various types of LQG metrics. 

\paragraph{Russo-Seymour-Welsh.}

The RSW argument relates, to within a constant factor, quantiles of the left--right LFPP crossing distances of a ``portrait'' rectangle and of a ``landscape'' rectangle. (By a crossing distance we simply mean the distance between two opposite sides of a rectangle.) In \cite{DD16,DD18}, these crossings are referred to as ``easy'' and ``hard'' respectively. The utility of such a result is that crossings of larger rectangles necessarily induce easy crossings of subrectangles, while hard crossings of smaller rectangles can be glued together to create crossings of larger rectangles. Thus, multiscale analysis arguments can establish lower bounds in terms of easy crossings and upper bounds in terms of hard crossings. RSW arguments then allow these bounds to be compared.

RSW arguments originated in the works \cite{Russo78, SW78, Russo81} for Bernoulli percolation, and have since been adapted to many percolation settings. The work \cite{DD16} introduced an RSW result for LFPP  in the small noise regime based on an RSW result for Voronoi percolation devised by Tassion \cite{Tassion16}. Tassion's result is beautiful but intricate, and becomes quite complex when it is adapted to take into account the weights of crossing in the first-passage percolation setting, as was done in \cite{DD16}.

The RSW approach of this paper is based on the much simpler approach introduced in \cite{DF18}, which relies on an approximate conformal invariance of the field. (We recall that the Gaussian free field is exactly conformally invariant in dimension $2$, and that the LQG measure enjoys an exact conformal covariance.) Roughly speaking, the conformal invariance argument relies on writing down a conformal map between the portrait and landscape rectangles, and analyzing the effect of such a map on crossings of the rectangle. We note that the approximate conformal invariance used in this paper relies in an important way on the exact independence of different ``scales'' of the field, which is manifest in the independence of the white noise at different times in the expression \eqref{eq:phideltadef}. Thus, the argument we use here is not immediately applicable to mollifications of the Gaussian free field by general mollifiers (for example, the common ``circle-average approximation'' of the GFF). The RSW argument of \cite{DF18} was also adapted in \cite{DD18} to the Liouville graph distance case. 

\paragraph{Tail estimates.}

Once the RSW result is established, we derive tail estimates with respect to fixed quantiles. The lower tail estimate is unconditional, while the upper tail estimate depends on a quantity $\Lambda_n$ measuring the concentration at the current scale, which will later be uniformly bounded by an inductive argument. 

\paragraph{Multiscale analysis.}

With RSW and tail estimates in hand, we turn to the multiscale analysis part of the paper. This argument turns on the Condition~(T) formulated in \eqref{eq:AssA} below, which, informally, states that the arclength of the crossing is not concentrated on a small number of subarcs of small Euclidean diameter. The argument of \cite{DD18} requires similar input, which is a key role of the subcriticality $\gamma<2$. While \cite{DD18} relies directly on certain scaling symmetries of the Liouville graph distance to use subcriticality, the present work relies on the characterization of the Hausdorff dimension $d_\gamma$ obtained in \cite{DG18}, along with some weak multiplicativity arguments and concentration obtained from percolation arguments.

\paragraph{Condition (T).}

Our formulation of Condition~(T), which has not appeared in previous works, precisely captures the property of the metric needed to obtain the tightness of the left--right crossing distances, the existence of the exponent, and the tail estimates (via a uniform bound on the $\Lambda_n$).

Condition~(T) makes sense for LFPP with any underlying field and any parameter $\xi$. In particular, this condition or a variant thereof could possibly hold for LFPP for some $\xi >2/d_2$. 
Therefore, a byproduct of the present work is a simple criterion (that implies, as noted above, tightness of the crossing distances, existence of exponents, and tail estimates)  that may be applicable more generally.

The utility of Condition~(T) is that it allows us to use an Efron--Stein argument to obtain a contraction in an inductive bound on the crossing distance logarithm variance. Informally, since the crossing distance feels the effect of many different subboxes, the subbox crossing distances are effectively being averaged to form the overall crossing distance. This yields a contraction in variance. (Of course, the coarse scales also contribute to the variance, and hence the variance of the crossing distance does not decrease as the discretization scale decreases but rather stays bounded.) 

The way we verify Condition (T) is quite rough: we bound the field uniformly over a coarse grained geodesic by the supremum of the field over the unit square. It turns out that this bound together with the identification of the exponent $1-\xi Q$ is enough to establish the condition.

\paragraph{Tightness of the metrics.}

Once the tightness of the left--right crossing distance is established, we turn to the tightness of the diameter and of the metric itself. This is done by a chaining argument, and requires again $\xi<2/d_2$. The diameter is not expected to be tight when $\xi>2/d_2$, since there are points that become infinitely distant from the bulk of the space as the discretization scale goes to $0$.

\section{Description and comparison of approximations}
\label{Sec:Convolution}
We recall that
a white noise $W$ on $\mb{R}^d$ is a random Schwartz distribution such that for every smooth and compactly supported test function $f$, $\langle W, f \rangle$ is a centered Gaussian variable with variance $\| f \|_{L^2(\mb{R}^d)}$ (see e.g. \cite{DaPrato}). The main approximation of the Gaussian free field that we consider in this paper is defined for $\delta \in (0,1)$ by
\begin{equation}
\label{Def:PhiDelta}
\phi_{\delta}(x) := \sqrt{\pi} \int_{\delta^2}^1 \int_{\R^2} p_{\frac{t}{2}}(x-y) W(dy,dt) 
\end{equation}
where $p_t(x-y) := \frac{1}{2\pi t}e^{-\frac{|x-y|^2}{2t}}$ and $W$ is a space-time white noise on $[0,1] \times \R^2$. This approximation is different than the one considered in \cite{DF18}
which is
$$
\tilde{\phi}_{\delta}(x) := \int_{\delta}^1 \int_{\R^2} k \left( \frac{x-y}{t} \right) t^{-3/2} W (dy, dt) 
$$
for a smooth nonnegative bump function $k$, radially symmetric and with compact support. Up to a change of variable in $t$, the difference is essentially replacing $p_1$ by $k$. Both fields are normalized in such a way that $\E ( \phi_{0}(x) \phi_{0}(y) ) = - \log |x-y| + g(x,y)$ with $g$ continuous (see e.g. Section 2 in \cite{DF18}): this is the reason for the factor $\sqrt{\pi}$ in \eqref{Def:PhiDelta}.

Let us mention that $\star$-scale invariant Gaussian fields with compactly-supported bump function $k$ 
\begin{enumerate}
\item
\label{item:a}
are invariant under Euclidean isometries,

\item
\label{item:b}
have finite-range correlation at each scale,

\item
\label{item:c}
and have
convenient scaling properties.
\end{enumerate}
The Gaussian field $\phi_{\delta}$ introduced above satisfies \ref{item:a} and \ref{item:c} but not \ref{item:b}. Because of the lack of finite-range correlation, we will also use a field $\psi_{\delta}$ (defined in the next section) which satisfies \ref{item:a} and \ref{item:b} such that $\sup_{n \geq 0} \norme{ \phi_{0,n} - \psi_{0,n}}_{L^{\infty}([0,1]^2)}$ has Gaussian tails, where we use the notation $\phi_{0,n}$ for $\phi_{\delta}$ with $\delta = 2^{-n}$. 

\subsection{Basic properties of $\phi_{\delta}$ and $\psi_{\delta}$}

\paragraph{Scaling property of $\phi_{\delta}$.} We use the scale decomposition
$$
\phi := \sum_{n \geq 0} \phi_n ~~ \text{where} ~~ \phi_{n}(x)  = \sqrt{\pi} \int_{2^{-2(n+1)}}^{2^{-2n}} \int_{\R^2} p_{\frac{t}{2}}(x-y) W(dy,dt)
$$
If we denote by $C_n$ the covariance kernel of $\phi_n$, so $C_n(x,x') = \E (\phi_n(x) \phi_n(x'))$, then we have
$$
C_n(x,x') = \int_{2^{-2(n+1)}}^{2^{-2n}} \frac{1}{2 t} e^{-\frac{|x-x'|^2}{2t}} dt = C_0(2^{n}x, 2^n x').
$$
Therefore, the law of $(\phi_{n}(x))_{x \in [0,1]^2}$ is the same as $(\phi_{0}(2^n x))_{x \in [0,1]^2}$. Because of the  $\frac{1}{2t}$ above,  we choose $\delta^2$ and not $\delta$ in \eqref{Def:PhiDelta} so that the pointwise variance $\phi_{\delta}$ is $\log \delta^{-1}$. Similarly, for $0 < a < b$ and $x \in \R^2$, set \begin{equation}\phi_{a,b}(x) := \sqrt{\pi} \int_{a^2}^{b^2} \int_{\R^2} p_{\frac{t}{2}}(x-y) W(dy,dt)\label{eq:phiabdef}\end{equation} and note that we have the scaling identity $\phi_{a,b}(r\cdot) \overset{(d)}{=} \phi_{a/r,b/r}(\cdot)$. Indeed, we have
$$
\E (\phi_{a,b}(rx) \phi_{a,b}(rx')) = \pi \int_{a^2}^{b^2} \int_{\R^2} p_{\frac{t}{2}}(rx-y) p_{\frac{t}{2}}(y-rx') dy dt= \pi \int_{a^2}^{b^2} p_t(r(x-x')) dt = \int_{a^2}^{b^2} \frac{1}{2t} e^{- \frac{r^2 |x-x'|^2}{2t}} dt ,
$$
and by the change of variable $t = r^2 u $, this gives $$
\int_{a^2}^{b^2} \frac{1}{2t} e^{- \frac{r^2 |x-x'|^2}{2t}} dt  = \int_{(a/r)^2}^{(b/r)^2} \frac{1}{2t} e^{- \frac{ |x-x'|^2}{2t}} dt = \E (\phi_{a/r,b/r}(x) \phi_{a/r,b/r}(x')) .
$$ 
We will use the notation $\phi_{k,n}$ when $a = 2^{-n}$ and $b = 2^{-k}$ for $0 \leq k \leq n$.

\paragraph{Maximum and oscillation of $\phi_{\delta}$.} We have the same estimates for the supremum of the field $\phi_{0,n}$ as those for the $\star$-scale invariant case considered in \cite{DF18} (it is essentially a union bound combined with a scaling argument). The following proposition corresponds to Lemma 10.1 and Lemma 10.2 in \cite{DF18}.
\begin{Prop}[Maximum bounds] 
\label{Prop:MaxBound}
We have the following tail estimates for the supremum of $\phi_{0,n}$ over the unit square: for $a > 0$, $n \geq 0$,
\begin{equation}
\label{eq:MaxBoundTail}
\Pro \left( \max_{[0,1]^2} |\phi_{0,n}| \geq a(n + C \sqrt{n}) \right) \leq C 4^n e^{-\frac{a^2}{\log 4} n}
\end{equation}
as well as the following moment bound: if $\gamma < 2$, then
\begin{equation}
\label{eq:MaxBoundExp}
\E (e^{\gamma \max_{[0,1]^2} | \phi_{0,n} |}) \leq 4^{\gamma n + O(\sqrt{n})}
\end{equation}
\end{Prop}
We will also need some control on the oscillation of the field $\phi_{0,n}$. We introduce the following notation for the $L^{\infty}$-norm on a subset of $\R^d$. If $A$ is a subset of $\R^d$ and $f: A \to \R^m$, we set
\begin{equation}
\label{Def:Norme2}
\norme{f}_A := \sup_{x \in A} |f(x)|
\end{equation}
We introduce the following notation to describe the oscillation of a smooth field $\phi$: if $A \subset \R^2$ we set
\begin{equation}
\label{Def:Osc}
\osc_A(\phi) := \diam(A) \norme{ \con \phi }_A,
\end{equation}
so that if $A$ is convex then $\sup_{x,y \in A} | \phi(x) - \phi(y)| \leq \osc_A(\phi)$ and \[\max_{P \in \mc{P}_n, P \subset [0,1]^2} \osc_{P}(\phi_{0,n}) \leq C 2^{-n} \norme{\con \phi_{0,n}}_{[0,1]^2},\] where ${\mc P}_n$ denotes the set of dyadic blocks at scale $n$, viz. \begin{equation}\label{Pndef}\mc{P}_n := \lbrace 2^{-n}([i,i+1]\times[j,j+1]) : i,j \in \Z \rbrace.
\end{equation}
In order to simplify the notation $P \in \mc{P}_n, P \subset [0,1]^2$ later on, we also set 
\begin{equation}
\label{eq:PK1}
\mc{P}_n^1 := \lbrace P \in \mc{P}_n : P \subset [0,1]^2 \rbrace.
\end{equation}
\begin{Prop}[Oscillation bounds]
We have the following tail estimates for the oscillation of $\phi_{0,n}$: there exists $C> 0$, $\sigma^2 > 0$, so that, for all $x,\eps > 0$, $n \geq 0$, 
\begin{equation}
\Pro \left( 2^{-n} \norme{ \con \phi_{0,n} }_{[0,1]^2} \geq x \right) \leq C 4^n e^{-\frac{x^2}{2\sigma^2}}
\label{eq:OscBoundTail}
\end{equation}
as well as the following moment bound: for $a >0$, there exists $c_a > 0$ so that for $n \geq 0$,
\begin{equation}
\label{eq:OscBoundExp}
\E \left( e^{a n^{\eps} 2^{-n} \norme{ \con \phi_{0,n} }_{[0,1]^2} }\right) \leq e^{c_a n^{\frac{1}{2}+\eps} + O(n^{2\eps})}
\end{equation}
\end{Prop}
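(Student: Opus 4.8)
The plan is to combine the scale decomposition $\phi_{0,n}=\sum_{k=0}^{n-1}\phi_k$, the scaling identity $\con\phi_k(\cdot)\overset{(d)}{=}2^{k}(\con\phi_0)(2^{k}\cdot)$, and the Borell--TIS Gaussian concentration inequality applied to $M_n:=2^{-n}\norme{\con\phi_{0,n}}_{[0,1]^2}$. Using $|w|=\sup_{|v|=1}v\cdot w$, this $M_n$ is the supremum over the compact index set $[0,1]^2\times\{|v|=1\}$ of the a.s.\ bounded centered Gaussian process $(x,v)\mapsto 2^{-n}v\cdot\con\phi_{0,n}(x)$. The structural fact underlying every estimate below is that $\phi_0=\phi_{0,1}$, a white-noise integral against a kernel with no diagonal singularity (the time integration starts at $t=\tfrac14$), is a smooth stationary isotropic Gaussian field with covariance $C_0(z)=\int_{1/4}^{1}\tfrac1{2t}e^{-|z|^2/(2t)}\,dt$, which is real-analytic with Gaussian decay; consequently $\con\phi_0$ is a smooth stationary Gaussian field with rapidly decaying covariance.

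I would first establish two inputs. \emph{(i) Worst-case pointwise variance.} The $\phi_k$ use disjoint time-windows of $W$, hence are independent, and $\Cov(\con\phi_k(x))=4^{k}\Cov(\con\phi_0(0))$ by scaling, so $\Cov(\con\phi_{0,n}(x))=\bigl(\sum_{k=0}^{n-1}4^{k}\bigr)\Cov(\con\phi_0(0))$ and therefore
\[
\sup_{x\in[0,1]^2,\ |v|=1}\Var\bigl(2^{-n}\,v\cdot\con\phi_{0,n}(x)\bigr)\ \le\ \tfrac13\,v_0\ =:\ \sigma^2 ,
\]
with $v_0:=\Var(v\cdot\con\phi_0(0))$ (independent of the unit vector $v$, by isotropy) a constant not depending on $n$. \emph{(ii) Mean bound.} By the triangle inequality and the scaling identity, $\E M_n\le 2^{-n}\sum_{k=0}^{n-1}2^{k}\,\E\norme{\con\phi_0}_{[0,2^{k}]^2}$; partitioning $[0,L]^2$ into $\asymp L^2$ unit cells and combining stationarity with Borell--TIS on each cell gives $\E\norme{\con\phi_0}_{[0,L]^2}\le C\sqrt{\log(2+L)}$, and since the geometric weights $2^{k-n}$ concentrate the sum near $k\approx n$ this yields $\E M_n\le C\sqrt{n+1}$.

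With (i) and (ii) available, Borell--TIS gives $\Pro(M_n\ge\E M_n+t)\le e^{-t^2/(2\sigma^2)}$ for all $t\ge0$. For $x\ge 2\E M_n$ this already gives $\Pro(M_n\ge x)\le e^{-x^2/(8\sigma^2)}$, while for $x<2\E M_n\le C'\sqrt{n+1}$ one simply writes $\Pro(M_n\ge x)\le 1\le C4^{n}e^{-x^2/(2\sigma^2)}$, which holds once $\sigma^2$ is enlarged so that $x^2\le C'^2(n+1)\le 2\sigma^2 n\log 4$ (enlarging $\sigma^2$ only weakens the stated bound, so this is harmless). Combining the two cases yields \eqref{eq:OscBoundTail}. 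For the moment bound, writing $M_n=\E M_n+(M_n-\E M_n)$ and setting $\lambda:=an^{\eps}$,
\[
\E\,e^{\lambda M_n}=e^{\lambda\,\E M_n}\,\E\,e^{\lambda(M_n-\E M_n)}\ \le\ e^{\lambda C\sqrt{n+1}}\cdot C\,(1+\lambda\sigma)\,e^{\lambda^2\sigma^2/2},
\]
where the last factor comes from integrating the subgaussian tail in (i). Since $\lambda C\sqrt{n+1}\le c_a n^{1/2+\eps}$, $\lambda^2\sigma^2/2=O(n^{2\eps})$ and $\log\bigl(C(1+\lambda\sigma)\bigr)=O(\log n)=O(n^{2\eps})$, this is $e^{c_a n^{1/2+\eps}+O(n^{2\eps})}$, i.e.\ \eqref{eq:OscBoundExp}. (The cases $n\in\{0,1\}$ are trivial, since $\con\phi_{0,0}\equiv 0$ and $\con\phi_{0,1}=\con\phi_0$.)

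The step I expect to be the main obstacle is input (ii): checking that $\con\phi_0$ is regular enough that $\E\norme{\con\phi_0}_{[0,L]^2}$ grows only like $\sqrt{\log L}$, and then that the geometric weighting genuinely collapses $\sum_{k}2^{k-n}\sqrt k$ to $O(\sqrt n)$ rather than $O(n)$ — a linear-in-$n$ mean bound would be fatal for both conclusions. By contrast the variance identity is an exact geometric sum, and the passage from the Gaussian tail to the exponential moment is routine.
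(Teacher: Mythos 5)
Your proof is correct, and it is genuinely more self-contained than the paper's. The paper simply cites \cite{DF18} (``between Equation (10.3) and Equation (10.4)'') for the tail estimate \eqref{eq:OscBoundTail} and spends its proof only on the exponential-moment bound \eqref{eq:OscBoundExp}, which it obtains by directly integrating the cited tail bound: setting $x_n := a_n\sigma^2+\alpha\sigma\sqrt n$ with $\alpha^2/2=\log 4$ to exactly cancel the $4^n$ prefactor, one gets $\E(e^{a_nO_n})\le e^{a_nx_n}+\int_{e^{a_nx_n}}^\infty\Pro(e^{a_nO_n}\ge x)\,dx\le e^{a_nx_n}+e^{O(n^{2\eps})}$. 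You instead reprove \eqref{eq:OscBoundTail} via Borell--TIS applied to the process $(x,v)\mapsto 2^{-n}v\cdot\con\phi_{0,n}(x)$, with the pointwise variance computed exactly by scaling and independence of scales (a clean $\sum_{k<n}4^{k-n}\sigma_0^2\le\sigma_0^2/3$) and the mean controlled by the geometric collapse $\sum_{k<n}2^{k-n}\sqrt{\log(2+2^k)}=O(\sqrt n)$ after a union-bound over unit cells. The two derivations of \eqref{eq:OscBoundExp} are then slightly different: the paper carries the $4^n$ prefactor through and kills it with the choice of $x_n$, while you exploit the sharper Borell--TIS concentration around the mean (no $4^n$ at all) and get the moment bound by a standard tail integration. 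Both are fine; your intermediate concentration estimate $\Pro(M_n\ge \E M_n+t)\le e^{-t^2/(2\sigma^2)}$ is in fact strictly stronger than \eqref{eq:OscBoundTail}, and you correctly note that the $4^n$ and the enlarged $\sigma^2$ are only needed to make \eqref{eq:OscBoundTail} hold trivially in the small-$x$ regime where $x\lesssim\sqrt n$, which costs nothing for the application.

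One minor presentational remark: when you write $\E\|\con\phi_0\|_{[0,L]^2}\le C\sqrt{\log(2+L)}$, you should note explicitly (as you half-do) that the constant $\mu:=\E\|\con\phi_0\|_{[0,1]^2}$ is finite because $\con\phi_0$ is a continuous Gaussian field on a compact set, which is what licenses the subsequent Borell--TIS + union-bound step over unit cells; this is the only place where one needs the lack of short-distance singularity in $C_0$, and flagging it makes the argument airtight.
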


\begin{proof}
Inequality
\eqref{eq:OscBoundTail} was obtained between Equation (10.3) and Equation (10.4) in \cite{DF18}. Now, we prove \eqref{eq:OscBoundExp}.
Set $a_n := a n^{\eps}$, $O_n = 2^{-n} \norme{ \con \phi_{0,n}}_{[0,1]^2}$, and take $x_n = a_n \sigma^2 + \alpha \sigma \sqrt{n}$ with $\alpha > 0$ so that $\frac{\alpha^2}{2} = \log 4$. We have, using \eqref{eq:OscBoundTail}, 
$$
\int_{e^{a_n x_n}}^{\infty} \Pro \left( e^{a_n O_n} \geq x \right)  dx = \int_{a_n x_n}^{\infty} \Pro \left( e^{a_n O_n} \geq e^s \right) e^s  ds \leq C 4^n \int^\infty_{a_n x_n} e^{-\frac{s^2}{2 a_n^2 \sigma^2}} e^s ds
$$
By a change of variable ($s \leftrightarrow a_n \sigma s + (a_n \sigma)^2$), we get 
\[\int_{a_n x_n}^\infty e^{-\frac{s^2}{2 a_n^2 \sigma^2}} e^s ds =  a_n \sigma  e^{\frac{1}{2} a_n^2 \sigma^2} \int_{\frac{x_n}{\sigma} - a_n \sigma}^{
\infty} e^{-\frac{s^2}{2}} ds = a_n \sigma e^{\frac{1}{2} a_n^2 \sigma^2} \int_{\alpha \sqrt{n}}^{\infty} e^{-\frac{s^2}{2}} ds\] since $x_n = a_n \sigma^2 + \alpha \sigma \sqrt{n}$. Using that $\int_{a}^{\infty} e^{-b x^2} dx \leq (2ab)^{-1} e^{-ba^2}$, we get $\int_{e^{a_n x_n}}^{\infty} \Pro \left( e^{a_n O_n} \geq x \right)  dx \leq  e^{O(n^{2\eps})}$. The result follows from writing $\E (e^{a_n O_n}) \leq e^{a_n x_n} + \int_{e^{a_n x_n}}^{\infty} \Pro(e^{a_n O_n} \geq x) dx$.
\end{proof}

\paragraph{Definition of $\psi_{\delta}$.} We fix a smooth, nonnegative, radially symmetric  bump function $\Phi$ such that $0 \leq \Phi \leq 1$ and $\Phi$ is equal to one on $B(0,1)$ and to zero outside $B(0,2)$. We also fix small constants $r_0 > 0$ and $\eps_0 > 0$. We will specify these constants later on. In particular, $\eps_0$ appears in the main proof in \eqref{Def:PK} and its final effect is in \eqref{eq:EffectE0}. All other constants $C,c$ will implicitly depend on $r_0$ and $\eps_0$. Then, we introduce for each $\delta \in [0,1]$, the field
$$
\psi_{\delta}(x) := \int_{\delta^2}^1 \int_{\R^2} \Phi_{\sigma_t} (x-y) p_{\frac{t}{2}}(x-y)  W(dy,dt) = \int_{\delta^2}^1 \int_{\R^2}  p^{\mathrm{Tr}}_{\frac{t}{2}}(x-y)  W(dy,dt) 
$$
\begin{equation}
\label{Def:sigma}
\text{where} \quad \sigma_t = r_0 \sqrt{t} | \log t |^{\eps_0}, \quad \Phi_{\sigma_t}(\cdot) := \Phi(\cdot/\sigma_t) \quad \text{and} \quad p^{\mathrm{Tr}}_{\frac{t}{2}} := p_{\frac{t}{2}} \Phi_{\sigma_t}.
\end{equation}
Thanks to the truncation, the fields $(\psi_{\delta})_{\delta \in [0,1]}$ have finite correlation length $8 r_0 \sup_{t \in [0,1]} \sqrt{t} | \log t |^{\eps_0}$. 

\paragraph{Decomposition in scales and blocks of $\psi_{\delta}$.} We have the scale decomposition 
\begin{equation} 
\psi(x)  \coloneqq \int_{0}^1 \int_{\R^2}  p^{\mathrm{Tr}}_{\frac{t}{2}}(x-y)  W(dy,dt)
= \sum_{k = 1}^{\infty} \sum_{P \in \mc{P}_k}  \int_{2^{-2k}}^{2^{-2k + 2}} \int_{P}  p^{\mathrm{Tr}}_{\frac{t}{2}}(x-y)  W(dy,dt)  = \sum_{k \geq 1} \sum_{P \in \mc{P}_k} \psi_{k,P}(x)
\end{equation}
where $\psi_{k,P}$ is defined for $P \in \mc{P}_k$ by $\psi_{k,P}(x) :=  \int_{2^{-2k}}^{2^{-2k + 2}} \int_{P}  p^{\mathrm{Tr}}_{\frac{t}{2}}(x-y)  W(dy,dt)$ and thus has correlation length less than $C k^{\eps_0} 2^{-k}$. In particular, a fixed block field is only correlated with fewer than $C k^{2 \eps_0}$ other block fields at the same scale. In fact, when we apply the Efron-Stein inequality (see \eqref{eq:EfronStein}) we will use the following decomposition:
\begin{equation}
\label{eq:BlockDecompoPsi}
\psi_{0,n} = \psi_{0,K} + \sum_{P \in \mc{P}_K} \psi_{K,n,P}(x) \quad
\text{where} \quad \psi_{K,n,P}(x) :=  \int_{2^{-2n}}^{2^{-2K + 2}} \int_{P}  p^{\mathrm{Tr}}_{\frac{t}{2}}(x-y)  W(dy,dt).
\end{equation}
We note that there is a formal conflict in notation between \eqref{eq:phiabdef} and \eqref{eq:BlockDecompoPsi}, but it will always be clear from context whether the second subscript is a number or an element of $\mathcal{P}_k$ (a set), so confusion should not arise. 

\paragraph{Variance bounds for $\phi_{\delta}$ and $\psi_{\delta}$.} Later on we will need the following lemma.
\begin{Lem}
\label{lem:FieldScale}
There exists $C > 0$ so that for $\delta \in [0,1]$ and $x,x' \in \R^2$, we have
\begin{equation}
\label{eq:sndbound}
\Var \left( \phi_{\delta}(x) - \phi_{\delta}(x') \right) +\Var \left( \psi_{\delta}(x) - \psi_{\delta}(x') \right) \leq C \frac{| x - x'|}{\delta}.
\end{equation}
\end{Lem}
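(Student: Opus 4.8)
The plan is to compute each variance exactly via the It\^o isometry for the white noise $W$, and then reduce to estimating a single integral in the scale $t$. For $\phi_\delta$: writing $\phi_\delta(x)-\phi_\delta(x')=\sqrt\pi\int_{\delta^2}^1\int_{\R^2}\bigl(p_{t/2}(x-y)-p_{t/2}(x'-y)\bigr)W(dy,dt)$, the isometry, followed by expanding the square and using the semigroup identity $\int_{\R^2}p_{t/2}(x-y)p_{t/2}(x'-y)\,dy=p_t(x-x')$ (so also $\int p_{t/2}^2=p_t(0)=\frac1{2\pi t}$), gives
\begin{equation*}
\Var\bigl(\phi_\delta(x)-\phi_\delta(x')\bigr)=\int_{\delta^2}^1\frac1t\bigl(1-e^{-|x-x'|^2/(2t)}\bigr)\,dt .
\end{equation*}
Bounding $1-e^{-u}\le\min(1,u)\le\sqrt u$ with $u=|x-x'|^2/(2t)$ makes the integrand at most $\frac{|x-x'|}{\sqrt2}\,t^{-3/2}$, and $\int_{\delta^2}^1 t^{-3/2}\,dt\le 2/\delta$, so $\Var(\phi_\delta(x)-\phi_\delta(x'))\le\sqrt2\,|x-x'|/\delta$.

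For $\psi_\delta$: the isometry gives $\Var(\psi_\delta(x)-\psi_\delta(x'))=\int_{\delta^2}^1 I(t)\,dt$ with $I(t):=\int_{\R^2}\bigl(\tilde p_{t/2}(x-y)-\tilde p_{t/2}(x'-y)\bigr)^2\,dy$. Writing $\tilde p_{t/2}=p_{t/2}\Phi_{\sigma_t}$ and inserting the intermediate term $p_{t/2}(x'-y)\Phi_{\sigma_t}(x-y)$,
\begin{equation*}
\tilde p_{t/2}(x-y)-\tilde p_{t/2}(x'-y)=\bigl(p_{t/2}(x-y)-p_{t/2}(x'-y)\bigr)\Phi_{\sigma_t}(x-y)+p_{t/2}(x'-y)\bigl(\Phi_{\sigma_t}(x-y)-\Phi_{\sigma_t}(x'-y)\bigr),
\end{equation*}
so that, using $(a+b)^2\le2a^2+2b^2$, then $0\le\Phi_{\sigma_t}\le1$ for the first piece and $|\Phi_{\sigma_t}(x-y)-\Phi_{\sigma_t}(x'-y)|\le\min\bigl(1,\norme{\con\Phi}_{\R^2}\,|x-x'|/\sigma_t\bigr)$ together with $\min(1,v^2)\le v$ and $\int p_{t/2}(x'-y)^2\,dy=\frac1{2\pi t}$ for the second, one obtains $I(t)\le C\bigl(\frac1t(1-e^{-|x-x'|^2/(2t)})+|x-x'|/(t\sigma_t)\bigr)$ for an absolute constant $C$. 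The first term integrates to $O(|x-x'|/\delta)$ exactly as in the $\phi_\delta$ case, and since $\sigma_t=r_0\sqrt t|\log t|^{\eps_0}$ with $\eps_0<1$ we have $\int_{\delta^2}^1\frac{dt}{t\sigma_t}=\frac1{r_0}\int_{\delta^2}^1\frac{dt}{t^{3/2}|\log t|^{\eps_0}}=O(1/\delta)$, the factor $|\log t|^{-\eps_0}$ being bounded on $[\delta^2,1/2]$ and integrable on $[1/2,1]$ against the bounded factor $t^{-3/2}$. Summing the two contributions gives the $\psi_\delta$ bound, hence the lemma (for $\delta=0$ the right-hand side is $+\infty$ and there is nothing to prove).

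\textbf{Main obstacle.} The argument is a direct second-moment computation, so there is no deep difficulty; the one place that needs care is the truncation $\Phi_{\sigma_t}$ when $t$ is of order $\delta^2$. There one must combine the Lipschitz bound on the increment of $\Phi_{\sigma_t}$ with the trivial bound $\le1$ --- that is, use the minimum of the two --- since the Lipschitz bound alone yields a term of order $|x-x'|^2/\delta^2$ rather than the claimed $|x-x'|/\delta$. The elementary ``square-root gains'' $1-e^{-u}\le\sqrt u$ and $\min(1,v^2)\le v$ are exactly what convert the naive $1/\delta^2$ into $1/\delta$.
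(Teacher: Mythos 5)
Your proof is correct, and for the $\phi_\delta$ part it is essentially identical to the paper's. For the $\psi_\delta$ part, however, the route is genuinely different. The paper never estimates the $L^2$ norm of the kernel increment directly; instead it factors the covariance kernel as $\tilde p_{t/2}\ast\tilde p_{t/2}(z)=p_t(z)\,q_t(z)$, where $q_t$ is a probability-like correction taking values in $[0,1]$, and then telescopes the difference of products, $p_t(0)q_t(0)-p_t(z)q_t(z)=p_t(0)\bigl(q_t(0)-q_t(z)\bigr)+q_t(z)\bigl(p_t(0)-p_t(z)\bigr)$. The term with $q_t(z)$ in front is handled exactly as for $\phi_\delta$ because $0\le q_t\le1$, and the term with $p_t(0)$ in front is treated with a Lipschitz bound $|q_t(0)-q_t(z)|\le C|z|/\sigma_t$ obtained after a Gaussian change of variables. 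Since the difference enters \emph{linearly} at that point, no $\min(1,\cdot)$ device is needed. Your approach instead estimates $\lVert\tilde p_{t/2}(x-\cdot)-\tilde p_{t/2}(x'-\cdot)\rVert_{L^2}^2$ by inserting an intermediate term and applying $(a+b)^2\le2a^2+2b^2$; because that places the $\Phi_{\sigma_t}$-increment under a square, the Lipschitz bound alone would produce $|x-x'|^2/\delta^2$, and you correctly identify that combining it with the trivial bound $\le1$ via $\min(1,v^2)\le v$ recovers the first-order bound. So both arguments are valid; the paper's factorization sidesteps the squaring issue by working with the convolved kernel rather than with the $L^2$ increment, while your version is more self-contained (it avoids the $p_{t/2}(y)p_{t/2}(x-y)=p_t(x)p_{t/4}(y-x/2)$ identity and the ensuing change of variables) at the cost of the extra $\min$ observation. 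As a small bonus, your handling of the $t$-integral near $t=1$ --- splitting at $t=1/2$ and noting that $|\log t|^{-\eps_0}$ is merely integrable there rather than bounded --- is actually more careful than the paper's claim that $\sup_{t\in[0,1]}\sqrt t/\sigma_t<\infty$ (which fails as $t\to1^-$, although the integral bound the paper needs still holds).
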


\begin{proof}
We start by estimating the first term. Using the inequality $1 - e^{-z} \leq z \leq \sqrt{z}$ for $z \in [0,1]$ and $1-e^{-z} \leq 1 \leq \sqrt{z}$ for $z \geq 1$ we get
\begin{multline*}
\Var \left( \phi_{\delta}(x) - \phi_{\delta}(x')   \right) = C \int_{\delta^2}^1 \left( p_{\frac{t}{2}} \ast p_{\frac{t}{2}} (0)  - p_{\frac{t}{2}} \ast p_{\frac{t}{2}} (x-x') \right) dt \\
  = C  \int_{\delta^2}^1 \left( p_t(0) - p_t(x-x') \right) dt = C \int_{\delta^2}^1 \frac{1}{t} (1 - e^{-\frac{|x-x'|^2}{2t}}) dt \leq C |x-x'| \int_{\delta^2}^1 \frac{dt}{t^{3/2}} = C \frac{|x-x'|}{\delta}.
\end{multline*}
Similarly, for the second term, we have
$$
\Var \left( \psi_{\delta}(x) - \psi_{\delta}(x')   \right)  =  C \int_{\delta^2}^1 \left( p^{\mathrm{Tr}}_{\frac{t}{2}} \ast p^{\mathrm{Tr}}_{\frac{t}{2}} (0)  - p^{\mathrm{Tr}}_{\frac{t}{2}} \ast p^{\mathrm{Tr}}_{\frac{t}{2}} (x-x') \right) dt.
$$
Set $p^{\mathrm{Tr}}_{\frac{t}{2}} \ast p^{\mathrm{Tr}}_{\frac{t}{2}} =: p_t(x) q_t(x)$. Using the identity $p_{t/2}(y) p_{t/2}(x-y) = p_t(x) p_{t/4}(y-x/2)$ we get
$$
q_t(x) = \int_{\R^2} \frac{p_{t/2}(y) p_{t/2}(x-y)}{p_t(x)} \Phi_{\sigma_t}(y) \Phi_{\sigma_t}(x-y) dy = \int_{\R^2} p_{t/4}(y-x/2) \Phi_{\sigma_t}(y) \Phi_{\sigma_t}(x-y) dy.
$$
We rewrite the variance in terms of $q_t$: replacing $x-x'$ by $z$ we look at
\begin{align*}
\Var \left( \psi_{\delta}(x) - \psi_{\delta}(x')   \right)  &= C \int_{\delta^2}^1 (p_t(0) q_t(0) - p_t(z)q_t(z))dt \\&= C \int_{\delta^2}^1p_{t}(0) (q_t(0) - q_t(z)) dt + C \int_{\delta^2}^1 q_t(z) (p_t(0) - p_t(z))dt.
\end{align*}
We deal with these two terms separately. For the second one, since $0 \leq \Phi \leq 1$, we have $0 \leq q_t \leq 1$. Therefore, following what we did for $\phi_{\delta}$ above we directly have $0 \leq \int_{\delta^2}^1 q_t(z) (p_t(0) - p_t(z))dt \leq C \frac{|z|}{\delta}$. For the first term, since $p_t(0) = C t^{-1}$, it is enough to get the bound $\sqrt{t} | q_t(0) - q_t(z) | \leq C |z|$ to complete the proof of the lemma. Changing variables, we have
$$
q_t(z) = C \int_{\R^2} e^{-2 |y|^2} \Phi_{\sigma_t}(\sqrt{t}y + z/2) \Phi_{\sigma_t}(\sqrt{t}y - z/2)  dy.
$$
Therefore, using that $ 0 \leq \Phi \leq 1$, 
\begin{align*}
| q_t(z) - q_t(0) | & \leq C \int_{\R^2} e^{-2 |y|^2} | \Phi_{\sigma_t}(\sqrt{t}y + z/2) - \Phi_{\sigma_t}(\sqrt{ty}) | dy + C \int_{\R^2} e^{-2 |y|^2} | \Phi_{\sigma_t}(\sqrt{t}y - z/2) - \Phi_{\sigma_t}(\sqrt{ty}) | dy \\
& \leq C |z| \int_{\R^2} e^{-2 |y|^2} \norme{ \con \Phi_{\sigma_t} }_{\R^2}  dy 
 \leq C \frac{|z|}{\sigma_t} \norme{ \con \Phi }_{\R^2} \int_{\R^2} e^{-2 |y|^2} dy.
\end{align*}
Since $\sigma_t = r_0 \sqrt{t} | \log t |^{\eps_0}$, we see that $\sup_{t \in [0,1]} \frac{\sqrt{t}}{\sigma_t} < \infty$, and the result follows.
\end{proof}

\subsection{Comparison between $\phi_{\delta}$ and $\psi_{\delta}$}

The following proposition justifies the introduction of the field $\psi_{\delta}$.
\begin{Prop}
\label{Prop:ComparisonFields}
There exist $C >0$ and $c>0$  such that for all $x > 0$, we have
\begin{equation}
\Pro \left( \sup_{n \geq 0} \norme{ \phi_{0,n} - \psi_{0,n} }_{[0,1]^2} \geq x \right) \leq C e^{-c x^2}.
\end{equation}
\end{Prop}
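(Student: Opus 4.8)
The plan is to reduce the statement to a single variance estimate for the difference field and then feed it into the chaining/concentration machinery already used for the maximum and oscillation bounds above. Using the scale decompositions of $\phi_\delta$ and $\psi_\delta$ (and normalising $\psi_\delta$ by the same factor $\sqrt\pi$, so that $\tilde p_{t/2}=\sqrt\pi\,p_{t/2}\,\Phi_{\sigma_t}$), the field $\phi_{0,n}-\psi_{0,n}$ is the centred Gaussian field with white-noise kernel $\sqrt\pi\,p_{t/2}(x-y)\bigl(1-\Phi_{\sigma_t}(x-y)\bigr)$ integrated over the time range $t\in(2^{-2n},1)$. Let $D$ denote the same white-noise integral taken over all scales $t\in(0,1)$, and $D^{(n)}$ the integral over the fine scales $t\in(0,2^{-2n})$; then $\phi_{0,n}-\psi_{0,n}=D-D^{(n)}$ and $D=D^{(0)}$, so that
\[
\sup_{n\ge0}\norme{\phi_{0,n}-\psi_{0,n}}_{[0,1]^2}\ \le\ 2\sup_{n\ge0}\norme{D^{(n)}}_{[0,1]^2}.
\]
Hence it suffices to bound $\Pro\bigl(\sup_{n\ge0}\norme{D^{(n)}}_{[0,1]^2}\ge x\bigr)$, and (modulo the summation over $n$ carried out in the last step) this reduces to a per-scale tail bound for $\norme{D^{(n)}}_{[0,1]^2}$ with a variance proxy that decays in $n$.

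The key point is the following uniform variance estimate. Since $1-\Phi_{\sigma_t}$ vanishes on $\{|z|<\sigma_t\}$ and $\sigma_t^2/t=r_0^2|\log t|^{2\eps_0}$, the Gaussian identity $\int_{|z|\ge R}p_{t/2}(z)^2\,dz=\tfrac1{2\pi t}e^{-2R^2/t}$ gives, uniformly in $x\in\R^2$ and $n\ge0$,
\[
\Var\bigl(D^{(n)}(x)\bigr)\ \le\ \pi\int_0^{2^{-2n}}\!\int_{|z|\ge\sigma_t}p_{t/2}(z)^2\,dz\,dt\ =\ \tfrac12\int_0^{2^{-2n}}\frac1t\,e^{-2r_0^2|\log t|^{2\eps_0}}\,dt\ =\ \tfrac12\int_{2n\log2}^{\infty}e^{-2r_0^2u^{2\eps_0}}\,du\ =:\ v_n ,
\]
after the change of variable $u=-\log t$. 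The integral converges precisely because $\eps_0>0$ --- this is the one place the choice $\sigma_t=r_0\sqrt t\,|\log t|^{\eps_0}$ rather than $r_0\sqrt t$ is used --- so $v_0<\infty$, and a Laplace estimate of the tail yields $v_n\le C\,e^{-c\,n^{2\eps_0}}$. An entirely parallel computation --- splitting the $t$-integral of $\Var\bigl(D^{(n)}(x)-D^{(n)}(x')\bigr)$ at $t=|x-x'|^2$ and estimating $\con_x\bigl(p_{t/2}(x-\cdot)\,(1-\Phi_{\sigma_t}(x-\cdot))\bigr)$ on the coarse part $t\ge|x-x'|^2$ --- yields a modulus of continuity $\Var\bigl(D^{(n)}(x)-D^{(n)}(x')\bigr)\le\min\{\,2v_n,\ \omega(|x-x'|)\,\}$ with $\omega(r)\to0$ as $r\to0$ and $\omega$ independent of $n$; in particular every $D^{(n)}$ has a continuous version on $[0,1]^2$.

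With these inputs the per-scale tail bound follows from standard Gaussian tools. For fixed $n$, the modulus $\omega$ makes the Dudley entropy integral of $D^{(n)}$ over $[0,1]^2$ finite, with value $\mu_n$, and since this integral is dominated by $\int_0^{\sqrt{2v_n}}$ of a fixed integrand we have $\mu_n\to0$ and $\mu_*:=\sup_n\mu_n<\infty$; thus $\E\norme{D^{(n)}}_{[0,1]^2}\le\mu_n$, and the Borell--TIS inequality together with the variance bound above gives, for $x\ge2\mu_*$,
\[
\Pro\bigl(\norme{D^{(n)}}_{[0,1]^2}\ge x\bigr)\ \le\ 2\exp\!\Bigl(-\frac{(x-\mu_n)^2}{2v_n}\Bigr)\ \le\ \exp\!\bigl(-c\,x^2\,e^{c'n^{2\eps_0}}\bigr),
\]
using $1/v_n\ge C^{-1}e^{c'n^{2\eps_0}}$; note that Borell--TIS controls the supremum directly, so no $4^n$ factor from a discretisation grid enters here. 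Finally, since $2\eps_0>0$ we have $n^{2\eps_0}/\log n\to\infty$, hence $e^{c'n^{2\eps_0}}\ge\kappa\,n$ for all $n\ge1$ with $\kappa:=\inf_{n\ge1}e^{c'n^{2\eps_0}}/n>0$; so for $x$ large enough that $e^{-c\kappa x^2}\le\tfrac12$,
\[
\sum_{n\ge0}\Pro\bigl(\norme{D^{(n)}}_{[0,1]^2}\ge x\bigr)\ \le\ e^{-cx^2}+\sum_{n\ge1}e^{-c\kappa x^2 n}\ \le\ C\,e^{-cx^2},
\]
and for $x$ in a bounded range the bound $C\,e^{-cx^2}$ is trivial after enlarging $C$. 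Together with the reduction this proves the proposition.

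I expect the main obstacle to be the uniform-in-$n$ modulus of continuity $\omega$ feeding the Dudley bound. Unlike $\phi_{0,n}$ itself, the difference fields $D$ and $D^{(n)}$ keep arbitrarily fine scales and are \emph{not} pointwise differentiable, so oscillation estimates cannot be read off from a bound on $\norme{\con D^{(n)}}_{[0,1]^2}$; they must come from the increment computation sketched above, which requires carefully balancing the contribution of $\con\Phi_{\sigma_t}$ (of order $1/\sigma_t$) against the Gaussian tail and the $|\log t|^{\eps_0}$ factor, both as $t\to0$ and as $t\to1$. A secondary subtlety is that the gain $v_n\lesssim e^{-c\,n^{2\eps_0}}$ is only stretched-exponential in $n$; it is nonetheless summable after the union bound over scales --- precisely because $n^{2\eps_0}\gg\log n$ --- which is what upgrades the per-scale estimates back to a genuine Gaussian tail, uniformly in $x>0$.
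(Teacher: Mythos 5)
Your proof is correct in outline and takes a genuinely different route from the paper's. The paper works with the per-scale differences $D_k := \phi_{k-1,k}-\psi_{k-1,k}$ and establishes only two elementary per-scale estimates --- the pointwise variance decay $\Var D_k(x)\le Ce^{-ck^{2\eps_0}}$ and the Lipschitz-in-variance bound $\Var(\phi_k(x)-\phi_k(y))+\Var(\psi_k(x)-\psi_k(y))\le 2^k|x-y|$ --- then feeds them into a cited lemma from \cite{DZZ18} (a Fernique/grid-discretization style argument). You instead telescope to the cumulative fine-scale tail fields $D^{(n)}=\sum_{k>n}D_k$, bound their pointwise variance by the same change of variables, and replace the grid-plus-union-bound step with a Dudley entropy estimate and Borell--TIS. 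The tradeoff is that the scale-by-scale Lipschitz bound is exactly what becomes unusable for $D^{(n)}$ (as you observe, $\sum_{k>n}2^k$ diverges), so your modulus of continuity $\omega$ is necessarily much weaker: carrying out the computation you sketch, the $p_{t/2}\nabla\Phi_{\sigma_t}$ term dominates and one gets only $\omega(r)\lesssim e^{-c|\log r|^{2\eps_0}}$, not $O(r)$. This is still enough --- the canonical metric satisfies $d(x,x')\lesssim e^{-c|\log|x-x'||^{2\eps_0}/2}$, so $\log N(\eps)\lesssim|\log\eps|^{1/(2\eps_0)}$ and the Dudley integral $\int_0^{\cdot}|\log\eps|^{1/(4\eps_0)}\,d\eps$ converges --- but this convergence check is the one nontrivial point you leave implicit and should be made explicit; a vague ``$\omega(r)\to0$'' is not by itself sufficient for Dudley. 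The payoff of your route is that it is self-contained (no appeal to \cite{DZZ18}) and conceptually clean (no $4^n$ discretization factor); the payoff of the paper's route is that each per-scale estimate is an easy explicit computation and the concentration work is outsourced.

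Two small points worth fixing if you write this up in full: the increment bound is $\Var(D^{(n)}(x)-D^{(n)}(x'))\le 4v_n$ rather than $2v_n$ (via $(\sqrt{v_n}+\sqrt{v_n})^2$), and Dudley gives $\E\sup D^{(n)}\le C\mu_n$ for a universal constant $C$ together with a pinning term $\E|D^{(n)}(x_0)|\le\sqrt{v_n}$, so $\E\norme{D^{(n)}}_{[0,1]^2}\lesssim\mu_n+\sqrt{v_n}$; neither affects the final stretched-exponential-in-$n$ decay of the Borell--TIS tails, and the union bound over $n$ then goes through exactly as you wrote it.
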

\begin{proof}
For $k \geq 1$, we introduce the quantity $D_k(x) := \phi_{k-1,k}(x) -  \psi_{k-1,k}(x)$.
The proof follows from an adaptation of Lemma 2.7 in \cite{DZZ18} as soon as we have the estimates
\begin{equation}
\label{eq:VarDiff1P}
\Var D_k(x) \leq C e^{-c k^{2\eps_0}}
\end{equation}
and
\begin{equation}
\label{eq:VarDiff2P}
\Var \left( \phi_{k}(x) - \phi_k(y) \right) +\Var \left( \psi_k(x) - \psi_k(y) \right) \leq 2^k | x - y|.
\end{equation}
(The estimate \eqref{eq:VarDiff1P} is weaker than that used in \cite[Lemma 2.7]{DZZ18} but still much stronger than required for the proof given there.)
Note that \eqref{eq:VarDiff2P} follows from Lemma  \ref{lem:FieldScale} and 
for \eqref{eq:VarDiff1P} we proceed as follows: first note that
$$
\E\left( \left( \phi_{k-1,k}(x) - \psi_{k-1,k}(x) \right)^2 \right)  = \int_{2^{-2k}}^{2^{-2k+2}} \int_{\R^2} p_{\frac{t}{2}}(y)^2 (1 - \Phi_{\sigma_t}(y))^2 dy dt.
$$
For every $y$, we have $p_{t/2}(y) (1-\Phi_{\sigma_t}(y)) \leq (2\pi t)^{-1} e^{-\sigma_t^2/t}$ since $0 \leq \Phi_{\sigma_t} \leq 1$ and $\Phi_{\sigma_t}(y) = 1$ for $|y| \leq \sigma_t$. Therefore,
$$
\E\left( \left( \phi_{k-1,k}(x) - \psi_{k-1,k}(x) \right)^2 \right)   \leq \int_{2^{-2k}}^{2^{-2k+2}} \frac{e^{-\frac{\sigma_t^2}{t}}}{2 \pi t} \int_{\R^2} p_{\frac{t}{2}}(y) dy dt \leq C e^{-c k^{2\eps_0}}.\qedhere
$$
\end{proof}
Let us point out that in fact $\sum_{n \geq 0} \E ( \norme{\phi_{n,n+1} - \psi_{n,n+1}}_{[0,1]^2}) < \infty$ holds but we won't use it. Since we will be working with two different approximations of the Gaussian free field, we introduce here some notation, referring to one field or the other. We will denote by $R_{a,b} := [0,a] \times [0,b]$ the rectangle of size $(a,b)$. We define
\begin{equation}
\label{DefX}
X_{a,b} := \sup_{n \geq 0} \norme{\phi_{0,n} - \psi_{0,n} }_{R_{a,b}}
\end{equation}
and  $X_a := X_{a,a} $ for the supremum norm of the difference between the two fields on various rectangles. 

\subsection{Length observables}
The symbol $L_{a,b}^{(n)}(\phi)$ (and similarly $L_{a,b}^{(n)}(\psi)$) will refer to the left-right distance of the rectangle $R_{a,b}$ for the length functional $e^{\xi \phi_{0,n}} ds$:
\begin{equation}
\label{DefLength}
L_{a,b}^{(n)}(\phi) := \inf_\pi \int_{\pi} e^{\xi \phi_{0,n}} ds,
\end{equation}
where $ds$ refers to the Euclidean length measure and the infimum is taken over all smooth curves $\pi$ connecting the left and right sides of $R_{a,b}$. We will sometimes consider a geodesic associated to this variational problem. Such a path exists by the Hopf-Rinow theorem and a compactness argument.

We introduce some notation for the quantiles associated to this observable: $\ell_{a,b}^{(n)}(\phi,p)$ (similarly $\ell_{a,b}^{(n)}(\psi,p)$) is such that $\Pro \left( L_{a,b}^{(n)}(\phi) \leq \ell_{a,b}^{(n)}(\phi)\right)  = p$. For high quantiles, we introduce $\bar{\ell}_{a,b}^{(n)}(\phi,p) := \ell_{a,b}^{(n)}(\phi,1-p)$. Note that $\ell_{a,b}^{(n)}(\phi,p)$ is increasing in $p$ whereas $\bar{\ell}_{a,b}^{(n)}(\phi,p)$ is decreasing in $p$. Note that both are well-defined, i.e., there are no Dirac deltas in the law of $L_{a,b}^{(n)}$. This follows from an application of the Cameron--Martin formula. We will also need  the notation  
\begin{equation}
\label{DefQuant}
\Lambda_{n}(\phi,p) := \max_{k \leq n} \frac{ {\bar{\ell}_k}(\phi,p)}{{\ell_k}(\phi,p)} \quad \text{ where } {\ell_k}(\phi,p) := \ell_{1,1}^{(k)}(\phi,p) \quad \text{ and } \quad {\bar{\ell}_k}(\phi,p) := \bar{\ell}_{1,1}^{(k)}(\phi,p).
\end{equation}

The following inequalities are straightforward:
\begin{equation}
e^{- \xi X_{a,b}} L_{a,b}^{(n)}(\psi)   \leq L_{a,b}^{(n)}(\phi) \leq e^{\xi X_{a,b}} L_{a,b}^{(n)}(\psi)  
\end{equation}
Therefore, using Proposition \ref{Prop:ComparisonFields} (and a union bound, if necessary), we obtain that for some $C > 0$ (depending only on $a$ and $b$), for any $\eps>0$ we have
\begin{align*}
e^{-\xi C \sqrt{ |\log \eps/C} |} \bar{\ell}_{a,b}^{(n)}(\psi,p+\eps) \leq \bar{\ell}_{a,b}^{(n)}(\phi,p) \leq e^{\xi C \sqrt{ |\log \eps/C} |} \bar{\ell}_{a,b}^{(n)}(\psi,p-\eps) \\
e^{-\xi C \sqrt{ |\log \eps/C} |} \ell_{a,b}^{(n)}(\psi,p-\eps) \leq \ell_{a,b}^{(n)}(\phi,p) \leq e^{\xi C \sqrt{ |\log \eps/C} |} \ell_{a,b}^{(n)}(\psi,p+\eps)
\end{align*}
In particular, there exists $C_p > 0$ such that, uniformly in $n$,
\begin{equation}
\label{eq:RatiosPsiPhi}
{\ell_n}(\psi,p/2) \geq \sqrt{C_p}^{-1} {\ell_n}(\phi,p), \quad {\bar{\ell}_n}(\psi,p/2) \leq \sqrt{C_p} {\bar{\ell}_n}(\phi,p) \quad \text{ and }  \quad \Lambda_n(\psi,p/2) \leq C_p \Lambda_n(\phi,p).
\end{equation} 

Now, we discuss how the scaling property of the field $\phi$ translates at the level of lengths. We will use  the following equality in law: for $a,b >0$ and $0 \leq m \leq n$,
\begin{equation}
\label{eq:scaling-length}
L_{a,b}^{(m,n)}(\phi) \overset{(d)}{=} 2^{-m} L_{a \cdot 2^m,b \cdot 2^m}^{(n-m)}(\phi).
\end{equation}%

Finally, for a rectangle $P$ with two marked opposite sides, we define
$L^{(n)}(P,\phi)$ to be the crossing distance between the two marked sides under the field $e^{\xi \phi_{0,n}}$. The marked sides will be clear from context: if we call $P$ a ``long rectangle,'' then we mean that the marked sides are the two shorter sides, so that $L^{(n)}(P,\phi)$ is the distance across $P$ ``the long way.''

\subsection{Outline of the proof and roles of $\phi_{\delta}$ and $\psi_{\delta}$}

The key idea of the proof is to obtain a self-bounding estimate associated to a measure of concentration of some observables, say rectangle crossing lengths. This is naturally expected because of the tree structure of our model. We introduce a general condition, which we call Condition (T), (see \eqref{eq:AssA}) which ensures a contraction in the  self-bounding estimate \eqref{eq:RecursiveIneq}, which relates a measure of concentration at scale $n$, the variance, with the measure of concentration that we inductively bound, $\Lambda_{n-K}$ (see \eqref{DefQuant}), which is at a smaller scale.

We then prove that this condition, which depends only on $\xi$ and on the field considered, is satisfied when $\xi \in (0,(2/d_2)^-)$. This proof uses a result taken from \cite{DG18} about the existence of an exponent for circle average Liouville first passage percolation and this is the reason we don't consider the simpler $\star$-scale invariant field with compactly-supported kernel but the field $\phi_{\delta}$, which can be compared to the circle average process by a result obtained in \cite{DGo18}. 

The roles of $\phi_{\delta}$ and $\psi_{\delta}$ in the proof are the following.
\begin{enumerate}
\item
Prove Russo-Seymour-Welsh estimates for $\phi$.

\item 
Prove tail estimates w.r.t low and high quantiles for both $\phi$ and $\psi$: 

\begin{enumerate}
\item
Lower tails: Use directly the RSW estimates together with a Fernique-type argument for the field $\psi$ with local independence properties.

\item
Upper tails: use a percolation/scaling argument, percolation using  $\psi$ and scaling using $\phi$.
\end{enumerate}

\item
Concentration of the log of the left-right distance: use Efron-Stein for the field $\psi$ (because of the local independence properties at each scale). This gives the same result for $\phi$.

\item
To conclude for the concentration of diameter and metric, this is essentially a chaining/scaling argument using only the field $\phi$. 
\end{enumerate}

\section{Russo-Seymour-Welsh estimates}

\label{Sec:AppCIGauss}

\subsection{Approximate conformal invariance}

In order to establish our RSW result, we first show an approximate conformal invariance property of the field. The arguments in this section are similar to those of \cite[Section 3.1]{DF18}. The difference is that the Gaussian kernel has infinite support.

Recall that $\phi_{\delta}(x) = \int_{\delta^2}^1 \int_{\R^2} p_{\frac{t}{2}}(x-y) W(dy,dt)$ where $p_t(x-y) = \frac{1}{2\pi t}e^{-\frac{|x-y|^2}{2t}}$. Consider a conformal map $F$ between two bounded, convex, simply-connected open sets $U$ and $V$ such that $\abs{F'} \geq 1$ on $U$, $\norme{F'}_U < \infty$ and $\norme{F''}_U < \infty$. (We point out here that the assumption $|F'| \geq 1$ will be obtained later on by starting from a very small domain; this is exactly the content of Lemma \ref{Lem:CrossLower}.) We consider another field $\tilde{\phi}_{\delta}(x) =  \int_{\delta^2}^1 \int_{\R^2} p_{\frac{t}{2}}(x-y) \tilde{W}(dy,dt)$  where $\tilde{W}$ is a white noise that we will couple with $W$ in order to compare $\phi_{\delta}$ and $ \tilde{\phi}_{\delta} \circ F$. The coupling goes as follows: for $y \in U$, $t \in (0,\infty)$, let $y' = F(y) \in V$ and $t' = t |F'(y)|^2 $ and set $\tilde{W}(dy',dt') = |F'(y)|^2 W(dy,dt)$. That is, for every $L^2$ function $\omega$ on $V \times (0, \infty)$,
$$
\int \omega(y',t') \tilde{W}(dy',dt') = \int \omega(F(y),t |F'(y)|^2) \abs{F'(y)}^{2} W(dy,dt)
$$
and both sides have variance $\norme{\omega}_{L^2}^2$. The rest of the white noises are chosen to be independent, i.e., $W_{|U^c \times (0, \infty)}$, $W_{|U \times (0, \infty)}$ and $\tilde{W}_{|V^c \times (0, \infty)}$ are jointly independent.

\begin{Lem}
Under this coupling, we can compare the two fields $\tilde{\phi} _{\delta} (F(x))$ and  $\phi_{\delta} (x)$ on a compact, convex subset $K$ of $U$ as follows,
\begin{equation}
\label{eq:Decompo}
\tilde{\phi} _{\delta} (F(x)) - \phi_{\delta} (x)  =  \phi_{\mathrm{L}}^{(\delta)}(x) +  \phi_{\mathrm{H}}^{(\delta)}(x),
\end{equation}
where $\phi_{\mathrm{L}}^{(\delta)}$ (L for low frequency noise)  is a smooth Gaussian field  whose $L^{\infty}$-norm on $K$ has uniform Gaussian tails, and $\phi_{\mathrm{H}}^{(\delta)}$ (H for high frequency noise) is a smooth Gaussian field with uniformly bounded pointwise variance (in $\delta$ and $x \in K$). Furthermore, $\phi_{\mathrm{H}}^{(\delta)}$ is independent of $(\phi_{\delta}, \phi_{\mathrm{L}}^{(\delta)})$. 
\end{Lem}
This aforementioned independence property will be crucial for our argument.

\begin{proof}

\textit{Step 1:} Decomposition. For fixed $F$ and small $\delta$, we decompose $\phi_{\delta} (x) - \tilde{\phi} _{\delta} (F(x)) = \phi_1^{(\delta)}(x) +  \phi_2^{(\delta)}(x) +\phi_3^{(\delta)}(x)$, where
\begin{align*}
\phi_1^{(\delta)}(x) = & \int_{U} \int_{\delta^2}^{\abs{F'(y)}^{-2}} \left( p_{\frac{t}{2}}(x-y) - p_{{\frac{t}{2}} \abs{F'(y)}^2}  \left( F(x) - F(y) \right) \abs{F'(y)}^2 \right)  W(dy,dt)   \\
= &\int_{U} \int_{\delta^2}^{\abs{F'(y)}^{-2}} \left( p_{\frac{t}{2}}(x-y) - p_{{\frac{t}{2}}}  \left( \frac{ F(x) - F(y)}{F'(y)} \right) \right)  W(dy,dt)\\
 \phi_2^{(\delta)}(x) = & \int_{U^c} \int_{\delta^2}^1 p_{\frac{t}{2}}(x-y) W(dy,dt) - \int_{V^c} \int_{\delta^2}^1 p_{\frac{t}{2}} \left( F(x)-y \right) \tilde{W}(dy,dt) \\
&  + \int_{U} \int_{\abs{F'(y)}^{-2}}^1 p_{\frac{t}{2}}(x-y)  W(dy,dt)    \\
\phi_3^{(\delta)}(x) = & - \int_{U} \int_{\delta^2 \abs{F'(y)}^{-2}}^{\delta^2} p_{\frac{t}{2}} \left( \frac{F(x)-F(y)}{ F'(y)} \right) W(dy,dt)
\end{align*}

Remark also that $ \phi_3^{(\delta)}$ is independent of $\phi_{\delta}$, $ \phi_1^{(\delta)}$, and $ \phi_2^{(\delta)}$. 

\textit{Step 2:} Conclusion, assuming uniform estimates. We will estimate $\phi_i^{(\delta)}$, $i=1,2,3$, over $K$. In what follows, we take $x,x' \in K$. We assume first the following uniform estimates:
$$
\E ( ( \phi_1^{(\delta)}(x) - \phi_1^{(\delta)}(x') )^2) \leq C \abs{x-x'}, \quad
 \E ( ( \phi_2^{(\delta)}(x) -  \phi_2^{(\delta)}(x') )^2 )  \leq C \abs{x-x'}, \quad  \E \left(\phi_{3}^{(\delta)} (x) \right) \leq C.
$$
An application of Kolmogorov's continuity criterion and Fernique's theorem give uniform Gaussian tails for $\phi_1^{(\delta)}$ and $ \phi_2^{(\delta)}$. We then set $ \phi_{\mathrm{H}}^{(\delta)} :=  \phi_3^{(\delta)} $ and  $ \phi_{\mathrm{L}}^{(\delta)} :=  \phi_1^{(\delta)} +  \phi_2^{(\delta)}$.

\textit{Step 3:} Uniform estimates.

\textbf{First term.} We prove that $\E ( ( \phi_1^{(\delta)}(x) -  \phi_1^{(\delta)}(x') )^2) \leq C \abs{x-x'}$ by controlling
$$
\int_0^1 \int_U \left( p_{\frac{t}{2}}(x-y) - p_{\frac{t}{2}} \left(\frac{F(x) -F(u)}{F'(y)} \right) -  p_{\frac{t}{2}}(x'-y) +  p_{\frac{t}{2}} \left(\frac{F(x') -F(u)}{F'(y)}\right)\right)^2 dy dt 
$$
By introducing $p(x) = e^{-\frac{|x|^2}{2}}$ and by a change of variable $ t \leftrightarrow 2 t^2$, it is equivalent (up to a multiplicative constant) to bound from above the quantity
\begin{equation}
\label{eq:Bound}
\int_0^1 \frac{dt}{t^3} \int_U \left( p\left(\frac{x-y}{t}\right) - p\left(\frac{F(x) -F(y)}{t F'(y)} \right) -  p\left(\frac{x'-y}{t}\right) +  p \left(\frac{F(x') -F(y)}{tF'(y)}\right)\right)^2 dy .
\end{equation}
We will estimate this term by considering the case where $t \leq \sqrt{|x-x'|}$ and the case where $t \geq \sqrt{|x-x'|}$.

\textit{Step 3.(A):} Case $t \geq \sqrt{|x-x'|}$. Using the identity $| x- y |^2 + |x'-y|^2 = \frac{1}{2} |x-x'|^2 + 2 | y - \frac{x+x'}{2} |^2$ and the inequality $1-e^{-z} \leq z$, we get
\begin{equation}
\label{eq:BoundBis}
\int_{U} \left( p\left(\frac{x-y}{t}\right) - p\left( \frac{x'-y}{t} \right) \right)^2 dy \leq C t^2 (1-e^{-\frac{|x-x'|^2}{4t^2}})  \leq C \abs{x-x'}^2.
\end{equation}
Similarly,
\begin{equation}
\label{eq:BoundBis2}
\int_{U} \left( p\left(\frac{F(x)-F(y)}{tF'(y)}\right) - p\left( \frac{F(x')-F(y)}{tF'(y)} \right) \right)^2 dy  \leq C \abs{F(x)-F(x')}^2\leq C\abs{x-x'}^2,
\end{equation}
where the constant $C$ depends on $\|F'\|_U$. Then the corresponding part in \eqref{eq:Bound} is bounded from above by $|x-x'|^2 \int_{\sqrt{|x-x'|}}^1 \frac{dt}{t^3} \leq C |x-x'|$.  

\textit{Step 3.(B):} For $t \leq \sqrt{|x-x'|}$, using the Taylor inequality $| F(x) - F(y) - F'(y)(x-y) | \leq \frac{1}{2} \norme{F''}_{U} |x-y|^2$  and the mean value inequality (as we have assumed that $K$ is convex),
\begin{equation}
\label{eq:Taylor}
\abs{p\left(\frac{x-y}{t}\right) - p\left(\frac{F(x) -F(y)}{t F'(y)} \right) } 
\leq C \frac{|x-y|^2}{t} \left( \frac{|x-y|}{t} + \frac{|x-y|^2}{t} \right) e^{-\frac{1}{2t^2} \inf_{\alpha \in(0,1)} \abs{\alpha(x-y) + (1-\alpha) \frac{F(x)-F(y)}{F'(y)}}^2}.
\end{equation}

\textit{Step 3.(B): case (a).} If $y\in B(x,\eps)$ for $\eps$ small enough (depending only on $\norme{F''}_U$), we have, using again $| F(x) - F(y) - F'(y)(x-y) | \leq \frac{1}{2} \norme{F''}_{U} |x-y|^2$, uniformly in $\alpha \in (0,1)$,
$$
\abs{\alpha(x-y) + (1-\alpha) \frac{F(x)-F(y)}{F'(y)}} \geq |x-y| - \frac{1}{2} \norme{F''}_{U} |x-y|^2 \geq \frac{1}{2} |x-y|.
$$
Therefore, for such $y$'s we have, coming back to \eqref{eq:Taylor},
$$
\abs{p\left(\frac{x-y}{t}\right) - p\left(\frac{F(x) -F(y)}{t F'(y)} \right) }\leq C \frac{|x-y|^3}{t^2} e^{-\frac{|x-y|^2}{4t^2}}.
$$
For this case we get the bound
$$
\int_{B(x,\eps)} \left( p\left(\frac{x-y}{t}\right) - p\left(\frac{F(x) -F(y)}{t F'(y)} \right)  \right)^2 dy \leq C \int_{B(x,\eps)} \frac{|x-y|^6}{t^4} e^{-\frac{|x-y|^2}{2t^2}}  =  C t^{-2} \E (\abs{B_{t^2}}^6) \leq C t^4.
$$
where $B_t$ denotes a two-dimensional Gaussian variable with covariance matrix $t$ times the identity. This term contributes to \eqref{eq:Bound} as $ C\int_{0}^{\sqrt{|x-x'|}} \frac{dt}{t^3} t^4 \leq C |x-x'|$. 

\textit{Step 3.(B): case (b).} Now, for $t \leq \sqrt{|x-x'|}$ and $y \in U \setminus B(x,\eps)$ we write 
$$
\int_0^{\sqrt{|x-x'|}}\frac{dt}{t^3} \int_{U \setminus B(x,\eps)} p\left(\frac{x-y}{t}\right)^2 dy  \leq C \int_0^{\sqrt{|x-x'|}}\frac{dt}{t} \Pro ( \abs{B_{t^2}} > \eps ) \leq  C \int_0^{\sqrt{|x-x'|}}\frac{dt}{t} e^{-\frac{\eps^2}{2 t^2}} \leq C \abs{x-x'},
$$
and similarly
$$
\int_0^{\sqrt{|x-x'|}}\frac{dt}{t^3} \int_{U \setminus B(x,\eps)} p\left(\frac{F(x)-F(y)}{tF'(y)}\right)^2 dy  \leq  C \abs{x-x'},
$$
where the constant $C$ depends on $\|F'\|_U$ and $\|(F^{-1})'\|_U$.

Applying Step~3.(A) and then Step~3.(B) twice (once for $x$ and then again for $x'$) to \eqref{eq:Bound}, we get $\E ( ( \phi_1^{(\delta)}(x) - \phi_1^{(\delta)}(x') )^2) \leq C \abs{x-x'}$.

\textbf{Second term.}  We want to prove here that  $\E ( ( \phi_2^{(\delta)}(x) -  \phi_2^{(\delta)}(x') )^2 )  \leq C \abs{x-x'}$. Note  that three terms contribute to $\delta \phi_2$. The third one is a nice Gaussian field independent of $\delta$. The first two terms are similar, so we will just focus on the first one, namely $ \phi_{2,1}^{(\delta)}(x) :=  \int_{U^c} \int_{\delta^2}^1 p_{\frac{t}{2}}(x-y) W(dy,dt)$. We have, similarly to \eqref{eq:Bound} and \eqref{eq:BoundBis},
\begin{align*} 
\E \left( \left(  \phi_{2,1}^{(\delta)}(x) - \phi_{2,1}^{(\delta)}(x') \right)^2 \right) & = \int_{\delta^2}^1 \int_{U^c} \left( p_{\frac{t}{2}}(x-y) -  p_{\frac{t}{2}}(x'-y)  \right)^2 dy dt \\
& \leq C \int_{0}^1 \frac{dt}{t^3} \int_{U^c} \left( p\left( \frac{x-y}{t} \right) - p\left( \frac{x'-y}{t}\right)  \right)^2 dy \\
& \leq C \int_0^{\sqrt{|x-x'|}} \frac{dt}{t^3} \int_{U^c} p\left( \frac{x-y}{t} \right) + p\left( \frac{x'-y}{t}\right) dy  + C | x-x'|.
\end{align*}
The remaining term  can be controlled as follows (noting the symmetry between $x$ and $x'$):
$$
\int_{0}^{\sqrt{|x-x'|}} \frac{dt}{t} \int_{U^c}  \frac{1}{t^2} e^{- \frac{|x-y|^2}{2 t^2}} dy \leq C \int_{0}^{\sqrt{|x-x'|}}  \frac{dt}{t} \Pro(\abs{ B_{t^2} } > d) \leq  C \int_{0}^{\sqrt{|x-x'|}}  \frac{dt}{t} e^{-\frac{d^2}{2t^2}} \leq C  \abs{ x-x'}.
$$
where  $d = d(K,U^c)$. Thus $\E ( ( \phi_2^{(\delta)}(x) -  \phi_2^{(\delta)}(x') )^2 )  \leq C \abs{x-x'}$.

\textbf{Third term.} We give here a bound on the pointwise variance of $\phi_{3}^{(\delta)}$. By using $\abs{\frac{F(x)-F(y)}{F'(y)}} \geq \frac{\abs{x-y}}{C}$ we get $\E (  \phi_3^{(\delta)}(x)^2 ) \leq \int_{c \delta^2}^{\delta^2} \frac{dt}{t} \int_{\R^2} \frac{ e^{-\frac{|x-y|^2}{Ct}}}{t} dy \leq C$.
\end{proof}

\subsection{Russo-Seymour-Welsh estimates}

\label{Sec:RSWreminder}

The main result of this section is the following RSW estimate. It shows that appropriately-chosen quantiles of crossing distances of ``long'' and ``short'' rectangles at the same scale can be related by a multiplicative factor that is uniform in the scale. This is the equivalent of Theorem 3.1 from \cite{DF18} but with the field mollified by the heat kernel instead of a compactly-supported kernel. It holds for any fixed $\xi > 0$.
\begin{Prop}[RSW estimates for $\phi_{\delta}$] If $[A,B] \subset (0,\infty)$, there exists $C > 0$ such that for $(a,b)$, $(a',b') \in [A,B]$ with $\frac{a}{b} < 1 < \frac{a'}{b'}$, for $n \geq 0$ and $\eps < 1/2$, we have,
\label{Prop:RSWphi}
\begin{equation}
\label{eq:LowQuantilesPhi}
\ell_{a',b'}^{(n)}(\phi,\eps/C) \leq C \ell_{a,b}^{(n)}(\phi,\eps)  e^{C \sqrt{\log | \eps/C |}};
\end{equation}
\begin{equation}
\label{eq:HighQuantilesPhi}
\bar{\ell}_{a',b'}^{(n)}(\phi,3\eps^C) \leq C \bar{\ell}_{a,b}^{(n)}(\phi,\eps) e^{C \sqrt{\log | \eps/C |}}.
\end{equation}
\end{Prop}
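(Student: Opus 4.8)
The plan is to convert the approximate conformal invariance of Section~\ref{Sec:AppCIGauss}, which culminates in the decomposition \eqref{eq:Decompo}, into a one-sided comparison of left--right crossing distances between a portrait and a landscape rectangle, and then to read off the quantile inequalities by elementary bookkeeping. Given $R_{a,b}$ with $a/b<1$ and $R_{a',b'}$ with $a'/b'>1$, all parameters in $[A,B]$, I would fix (as in \cite{DF18}) a conformal map $F$ from a bounded simply connected neighborhood $U\supset\overline{R_{a,b}}$ onto a neighborhood of $\overline{R_{a',b'}}$, with $\norme{F'}_U,\norme{F''}_U<\infty$ and $\abs{F'}\ge 1$ on a compact set $K$ with $\overline{R_{a,b}}\subset K\subset U$, chosen so that $F$ carries the quad $(R_{a,b};\,\{0\}\times[0,b],\,\{a\}\times[0,b])$ onto a quad containing $(R_{a',b'};\,\{0\}\times[0,b'],\,\{a'\}\times[0,b'])$; via the standard topological lemma on crossings of quads, any left--right crossing $\pi$ of $R_{a,b}$ is then mapped to a curve $F(\pi)$ which contains a left--right crossing of $R_{a',b'}$. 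The condition $\abs{F'}\ge1$ is not automatic for arbitrary aspect ratios, but can be forced after dilating the target rectangle by a bounded factor; that dilation is afterwards removed using the scaling property of $\phi_\delta$ from Section~\ref{Sec:Convolution}, the bounded number of extra coarse scales (and the bounded shift of the top scale) being absorbed into a multiplicative factor with uniform-in-$n$ Gaussian tails via the maximum and oscillation bounds \eqref{eq:MaxBoundTail}, \eqref{eq:OscBoundTail}.

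With $F$ so chosen and the coupling of $W,\tilde W$ of Section~\ref{Sec:AppCIGauss}, for every left--right crossing $\pi$ of $R_{a,b}$ the curve $F(\pi)$ contains a left--right crossing of $R_{a',b'}$ and, using \eqref{eq:Decompo},
\[
L^{(n)}_{a',b'}(\tilde\phi)\;\le\;\int_{F(\pi)}e^{\xi\tilde\phi_{0,n}}\,ds\;=\;\int_\pi e^{\xi\phi_{0,n}}\,e^{\xi(\delta\phi_l+\delta\phi_h)}\,\abs{F'}\,ds\;\le\;M\int_\pi e^{\xi\phi_{0,n}}\,ds,\qquad M:=\norme{F'}_U\,e^{\xi\norme{\delta\phi_l}_K+\xi\norme{\delta\phi_h}_K}
\]
(here $\abs{F'}\le\norme{F'}_U$; the hypothesis $\abs{F'}\ge1$ enters only in making $F(\pi)$ genuinely contain a crossing of $R_{a',b'}$ and in controlling the scales in \eqref{eq:Decompo}). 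Since $\delta\phi_h$ has bounded pointwise variance and $\delta\phi_l$ has uniform-in-$n$ Gaussian tails, $\norme{\delta\phi_l}_K$ and $\norme{\delta\phi_h}_K$ have Gaussian upper tails uniformly in $n$ and in the admissible rectangles, whence $\Pro(\log M\ge s)\le Ce^{-cs^2}$ for $s>0$. Taking the infimum over $\pi$ and using that $\tilde\phi_{0,n}\overset{(d)}{=}\phi_{0,n}$, we obtain a coupling of two copies of $\phi_{0,n}$ in which $L^{(n)}_{a',b'}(\phi)\overset{(d)}{=}L^{(n)}_{a',b'}(\tilde\phi)\le M\,L^{(n)}_{a,b}(\phi)$.

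For the low quantiles, put $t:=\ell^{(n)}_{a,b}(\phi,\eps)$, so that $\Pro(L^{(n)}_{a,b}(\phi)\le t)=\eps$; then for any $s>0$,
\[
\Pro\big(L^{(n)}_{a',b'}(\phi)\le e^{s}t\big)\;\ge\;\Pro\big(L^{(n)}_{a,b}(\phi)\le t,\ M\le e^{s}\big)\;\ge\;\eps-Ce^{-cs^2},
\]
and choosing $s=C'\sqrt{\log(1/\eps)}$ with $C'$ large enough that $Ce^{-cs^2}\le\eps/2$ gives $\ell^{(n)}_{a',b'}(\phi,\eps/2)\le e^{s}\ell^{(n)}_{a,b}(\phi,\eps)$, which after adjusting constants is \eqref{eq:LowQuantilesPhi} (note $\sqrt{\log(1/\eps)}$ and $\sqrt{\log\abs{\eps/C}}$ have the same order). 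For the high quantiles, run the same estimate with $t:=\bar\ell^{(n)}_{a,b}(\phi,\eps)$, i.e. $\Pro(L^{(n)}_{a,b}(\phi)\le t)=1-\eps$: this yields $\Pro(L^{(n)}_{a',b'}(\phi)\le e^{s}t)\ge 1-\eps-Ce^{-cs^2}$, and with $s\asymp\sqrt{\log(1/\eps)}$ and $\eps<1/2$ (so that $Ce^{-cs^2}$ is a small power of $\eps$), both $\eps$ and $Ce^{-cs^2}$ are dominated by $3\eps^{C}$ for a suitable exponent $C\in(0,1]$, which is \eqref{eq:HighQuantilesPhi}. The multiplicative errors produced by the removal of the dilation in the first paragraph are of exactly the same nature ($e^{O(\sqrt{\log(1/\eps)})}$ with Gaussian tails) and are absorbed in the same way.

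The analytic core — the decomposition \eqref{eq:Decompo} together with the uniform-in-$\delta$ tail bounds on $\delta\phi_l$, $\delta\phi_h$ — is already in hand, so the genuine work is concentrated in the first step: exhibiting, uniformly over all admissible aspect ratios, a conformal map that simultaneously respects the crossing structure, has bounded $F'$ and $F''$, and satisfies $\abs{F'}\ge 1$ (which forces the bounded dilation), and then carrying out the scaling reduction while keeping every error term of the admissible form $e^{O(\sqrt{\log(1/\eps)})}$ with Gaussian tails. I expect this — rather than the comparison of integrals or the quantile arithmetic — to be the main obstacle; a secondary, purely topological nuisance is to verify that $F(\pi)$ really contains a left--right crossing of $R_{a',b'}$ and not merely a path joining two distant points, which is why it is convenient to phrase everything in terms of quads and to insist that $F$ be expanding.
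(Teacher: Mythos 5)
Your proof outline correctly identifies the analytic inputs (the coupling \eqref{eq:Decompo}, the tail bounds on $\delta\phi_l$ and $\delta\phi_h$) and the quantile bookkeeping, but the first step — the existence of the conformal map $F$ — is not a ``nuisance to verify''; it is impossible as you have set it up, and dilation does not repair it. Conformal maps preserve the modulus of a quad: if $F$ carries the quad $\bigl(R_{a,b};\{0\}\times[0,b],\{a\}\times[0,b]\bigr)$ onto a quad $Q'$, then $Q'$ has modulus $a/b<1$. On the other hand, if every left--right crossing of $Q'$ contained a left--right crossing of $R_{a',b'}$, then monotonicity of extremal length (a curve family each of whose members contains a member of another family has at least the same extremal length) would force the modulus of $Q'$ to be at least $a'/b'>1$. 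Since the modulus is scale invariant, no amount of dilating the target rectangle removes this obstruction. This is exactly the issue flagged in the paper (``there are only three degrees of freedom in the choice of a conformal map''), and it is why the paper does \emph{not} map $R_{a,b}$ directly onto a quad enclosing $R_{a',b'}$.

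The paper's actual route is the combination of Lemmas \ref{Lem:CrossLower}, \ref{Lem:ComparisonEllipses} and \ref{Lem:FKG}: first descend $p$ dyadic scales inside $R_{a,b}$ (paying $j^p$ in probability for low quantiles, or raising to the power $1/j^p$ via the square-root trick for high quantiles) so that a crossing of $R_{a,b}$ induces a crossing of some copy of $[0,a/2^p]\times[0,b/2^p]$; enclose that small rectangle in an ellipse $E_p$ and $R_{a',b'}$ around an ellipse $E'$; subdivide the marked arcs of $E_p$ into $m$ subarcs and observe (Lemma \ref{Lem:ComparisonEllipses}) that for \emph{each} pair of subarcs there is a conformal map $E_p\to E'$ with $\abs{F'}\ge1$ sending that pair into the marked arcs of $E'$; finally use the square-root trick to locate a pair of subarcs that a given crossing uses with not-too-small probability, and only then apply Proposition \ref{Prop:RSWconf}, which is the quad-level version of your second and third paragraphs. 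The freedom to choose a different $F$ for each pair of subarcs is what circumvents the modulus constraint; your proposal has no substitute for it. Secondarily, you bound the high-frequency correction by $\norme{\delta\phi_h}_K$, which would need a chaining/Borell--TIS argument to get uniform Gaussian tails; the paper's Lemma \ref{Inequality} sidesteps this by conditioning on the geodesic and exploiting the independence of $\delta\phi_h$, using only its pointwise variance.
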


\noindent The following corollary then follows from Propositions~\ref{Prop:ComparisonFields} and~\ref{Prop:RSWphi}.
\begin{Cor}[RSW estimates for $\psi_{\delta}$] Under the same assumptions as used in Proposition~\ref{Prop:RSWphi}, we have
\label{Cor:RSWpsi}
\begin{equation}
\label{eq:LowQuantilesPsi}
\ell_{a',b'}^{(n)}(\psi,\eps/C) \leq C \ell_{a,b}^{(n)}(\psi,\eps)  e^{C \sqrt{\log | \eps/C |}}
\end{equation}
and
\begin{equation}
\label{eq:HighQuantilesPsi}
\bar{\ell}_{a',b'}^{(n)}(\psi,3\eps^C) \leq C \bar{\ell}_{a,b}^{(n)}(\psi,\eps) e^{C \sqrt{\log | \eps/C |}}.
\end{equation}
\end{Cor}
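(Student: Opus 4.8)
The plan is to deduce Corollary \ref{Cor:RSWpsi} from Proposition \ref{Prop:RSWphi} purely via the field comparison of Proposition \ref{Prop:ComparisonFields}, with no new probabilistic input. The bridge is the deterministic sandwich $e^{-\xi X_{a,b}} L_{a,b}^{(n)}(\psi) \leq L_{a,b}^{(n)}(\phi) \leq e^{\xi X_{a,b}} L_{a,b}^{(n)}(\psi)$ together with the Gaussian tail bound $\Pro(X_{a,b} \geq x) \leq C e^{-cx^2}$. The latter is Proposition \ref{Prop:ComparisonFields} on $[0,1]^2$, extended to $R_{a,b}$ for $(a,b) \in [A,B]^2$ by the scaling properties of $\phi_\delta$ and $\psi_\delta$; the constants are then allowed to depend on $B$, which is harmless since $[A,B]$ is fixed throughout.

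First I would record, exactly as in the display preceding \eqref{eq:RatiosPsiPhi}, the quantile comparison between the two fields: for any $\eps' > 0$,
\[
e^{-\xi C \sqrt{|\log(\eps'/C)|}}\, \ell_{a,b}^{(n)}(\psi,p-\eps') \leq \ell_{a,b}^{(n)}(\phi,p) \leq e^{\xi C \sqrt{|\log(\eps'/C)|}}\, \ell_{a,b}^{(n)}(\psi,p+\eps'),
\]
and symmetrically for $\bar{\ell}$ with $+\eps'$ and $-\eps'$ interchanged; the same holds with the roles of $\phi$ and $\psi$ swapped. This is because $\{X_{a,b} \leq x\}$ has probability at least $1-\eps'$ for $x$ a constant multiple of $\sqrt{|\log \eps'|}$, and on that event $L^{(n)}(\phi)$ and $L^{(n)}(\psi)$ differ by at most the factor $e^{\xi x}$.

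Then for the low quantiles I would chain, for a parameter $\eps'=\eps'(\eps)$ to be fixed,
\[
\ell_{a',b'}^{(n)}(\psi,\eps/C'') \;\lesssim\; \ell_{a',b'}^{(n)}(\phi,\eps/C''+\eps') \;\lesssim\; \ell_{a,b}^{(n)}(\phi,C(\eps/C''+\eps')) \;\lesssim\; \ell_{a,b}^{(n)}(\psi,C(\eps/C''+\eps')+\eps'),
\]
where the first and last steps use the $\phi$--$\psi$ quantile comparison and the middle step uses \eqref{eq:LowQuantilesPhi}. Choosing $\eps'$ a small constant multiple of $\eps$ and $C''$ suitably large, every quantile level appearing along the chain stays $\geq \eps/C$ for an appropriate $C$, each of the three comparisons contributes a multiplicative factor $e^{O(\sqrt{|\log \eps|})}$, and collecting them yields the claimed bound $C\,\ell_{a,b}^{(n)}(\psi,\eps)\, e^{C\sqrt{|\log(\eps/C)|}}$. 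For the high quantiles the identical chain applies starting from $\bar{\ell}_{a',b'}^{(n)}(\psi,3\eps^{C})$: the $\phi$-step \eqref{eq:HighQuantilesPhi} turns a level $\eps$ into $3\eps^{C}$, so here one should take $\eps'$ polynomially small in $\eps$ (e.g. $\eps' = \eps^{2C}$) so that the additive shifts $\pm\eps'$ from the two $\phi$--$\psi$ comparisons are dominated by the polynomial levels, and the bookkeeping closes with a possibly larger final exponent.

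The only genuinely delicate point is exactly this bookkeeping of quantile parameters: one must verify that, after absorbing the additive shifts $\pm \eps'$ coming from the two $\phi$--$\psi$ comparisons into the multiplicative/polynomial perturbations already present in Proposition \ref{Prop:RSWphi}, the resulting estimate still has the required form ($\eps/C$ for the low quantiles, $3\eps^{C}$ for the high quantiles) with uniform constants. This is routine but requires care because the high-quantile statement is not scale-invariant in $\eps$. Everything else --- the sandwich inequality, the Gaussian tails of $X_{a,b}$, and the extension of Proposition \ref{Prop:ComparisonFields} to $R_{a,b}$ --- is immediate from what precedes.
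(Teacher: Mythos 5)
Your approach is exactly what the paper intends: Corollary~\ref{Cor:RSWpsi} is stated without proof because it follows immediately from Proposition~\ref{Prop:RSWphi} via the quantile sandwich preceding~\eqref{eq:RatiosPsiPhi}, which is precisely the chain you set up. One small caveat on wording: in the high-quantile bookkeeping, tracing the chain through~\eqref{eq:HighQuantilesPhi} with additive shifts $\pm\eps'$ forces the exponent in $3\eps^{C}$ on the left-hand side to be \emph{smaller} than the one in Proposition~\ref{Prop:RSWphi}, not larger --- the smaller exponent makes $3\eps^{C}$ larger and hence $\bar\ell_{a',b'}^{(n)}(\psi,3\eps^{C})$ smaller, which is the weaker (provable) direction. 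This is harmless since the constant $C$ in the corollary is free to differ from that in the proposition, but the direction is worth getting right.
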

\noindent We point out that the constants $C$ in \eqref{eq:LowQuantilesPsi} and \eqref{eq:HighQuantilesPsi} are not equal to those in \eqref{eq:LowQuantilesPhi} and \eqref{eq:HighQuantilesPhi}. The remaining parts of the section will only deal with approximations associated with $\phi$ so we will omit this dependence in the various observables.

We describe below the main lines of the argument. Consider $R_{a,b}$ and $R_{a',b'}$, two rectangles with respective side lengths $(a,b)$ and $(a',b')$ satisfying $\frac{a}{b} < 1 < \frac{a'}{b'}$. Suppose that we could take a conformal map $F : R_{a,b} \to R_{a',b'}$ mapping the long left and right sides of $R_{a,b}$ to the short left and right sides of $R_{a',b'}$. (This is not in fact possible since there are only three degrees of freedom in the choice of a conformal map, but for the sake of illustration we will consider this idealized setting first.) Then the proof goes as follows. 

Take a geodesic $\tilde{\pi}$ for $\tilde{\phi}_{0,n}$ for the left-right crossing of $R_{a,b}$. Then, using the coupling \eqref{eq:Decompo}, we have
\begin{align*}
L^{\phi_{0,n}}(R_{a',b'})  \leq L^{\phi_{0,n}}(F(\tilde{\pi}))   = \int_{0}^{T} e^{\xi \phi_{0,n} (F(\tilde{\pi}(t)))} | F'(\tilde{\pi} (t)) | \cdot | \tilde{\pi}'(t) | dt  &\leq \norme{ F' }_{R_{a,b}} \int_{\tilde{\pi}} e^{\xi  (\tilde{\phi}_{0,n}  + \delta \phi_{\mathrm{L}} + \delta \phi_{\mathrm{H}})} ds \\
& \leq \norme{ F' }_{R_{a,b}} e^{\xi \norme{\delta \phi_{\mathrm{L}} }_{R_{a,b}}} \int_{\tilde{\pi}}  e^{\xi  \tilde{\phi}_{0,n}  } e^{\xi \delta \phi_{\mathrm{H}}} ds .
\end{align*}
It is essential that $\tilde{\pi}$ is $\tilde{\phi}_{0,n}$ measurable and $\tilde{\phi}_{0,n}$ is independent of $\delta \phi_{\mathrm{H}}$. Then, we can use the following lemma. 

\begin{Lem}
\label{Inequality} If $\Gamma$ is a continuous field and $\Psi$ is an independent continuous centered Gaussian field with pointwise variance bounded above by $\sigma^2 >0$, then we have, as long as $\eps$ is sufficiently small compared to $\sigma^2$,
\begin{enumerate}
\item
$\ell_{1,1} (\Gamma + \Psi,\eps) \leq e^{\sqrt{2 \sigma^2  \log \eps^{-1} }} \ell_{1,1}(\Gamma,2\eps) $;

\item 
$\bar{\ell}_{1,1}(\Gamma + \Psi,2\eps) \leq e^{\sqrt{2 \sigma^2  \log \eps^{-1} }} \bar{\ell}_{1,1}(\Gamma,\eps)$.
\end{enumerate} 
\end{Lem}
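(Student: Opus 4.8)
The plan is to prove Lemma \ref{Inequality} by a direct conditioning argument, exploiting the independence of $\Phi$ and $\Psi$ together with a union bound over the (random, $\Phi$-measurable) geodesic. First I would fix the relevant quantile: let $\pi$ be a $\Phi$-measurable left--right geodesic of $[0,1]^2$ for the length element $e^{\xi\Phi}\,ds$, so that $L^{\Phi}([0,1]^2)=\int_\pi e^{\xi\Phi}\,ds$. Along this curve, $\int_\pi e^{\xi(\Phi+\Psi)}\,ds\le e^{\xi\,\sup_\pi\Psi}\int_\pi e^{\xi\Phi}\,ds$, but a pointwise sup of a Gaussian field over a curve is awkward to control; instead I would integrate against $\Psi$ pointwise and use that for fixed $x$, $\mathbb P(\Psi(x)\ge u)\le e^{-u^2/(2\sigma^2)}$. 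The cleaner route, matching the $\sqrt{2\sigma^2\log\eps^{-1}}$ in the statement, is: condition on $\Phi$ (hence on $\pi$), and observe that $L^{\Phi+\Psi}([0,1]^2)\le\int_\pi e^{\xi\Phi}e^{\xi\Psi}\,ds$; then bound the probability that this exceeds $e^{\xi u}\int_\pi e^{\xi\Phi}\,ds=e^{\xi u}L^\Phi([0,1]^2)$ by the probability that $\sup_{x\in\pi}\Psi(x)\ge u$. Borell--TIS (or just a crude chaining/covering bound, since only a Gaussian-tail order of magnitude is needed) gives $\mathbb P(\sup_\pi\Psi\ge u\mid\Phi)\le \eps/2$ once $u\gtrsim\sqrt{2\sigma^2\log\eps^{-1}}$ plus a lower-order correction; here one uses that $\pi$ lies in a fixed bounded region so the metric entropy is universally controlled.

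The core inequality then runs as follows for part (1). Set $u=\sqrt{2\sigma^2\log\eps^{-1}}$ (absorbing any constant/additive correction into the statement's constants, or noting the paper's $C$'s already allow this slack). Write $A=\{L^{\Phi}([0,1]^2)\le \ell_{1,1}(\Phi,2\eps)\}$, an event of probability $2\eps$, measurable w.r.t.\ $\Phi$. On $A$, if additionally $\sup_\pi\Psi<u$, then $L^{\Phi+\Psi}([0,1]^2)\le e^{\xi u}L^\Phi([0,1]^2)\le e^{\xi u}\ell_{1,1}(\Phi,2\eps)$. Since conditionally on $\Phi$ (hence on $\pi$) we have $\mathbb P(\sup_\pi\Psi\ge u\mid\Phi)\le \eps$ uniformly, integrating over the event $A$ gives
\begin{equation*}
\mathbb P\big(L^{\Phi+\Psi}([0,1]^2)\le e^{\xi u}\ell_{1,1}(\Phi,2\eps)\big)\ge \mathbb P(A)-\eps = 2\eps-\eps=\eps,
\end{equation*}
which by definition of the quantile yields $\ell_{1,1}(\Phi+\Psi,\eps)\le e^{\xi u}\ell_{1,1}(\Phi,2\eps)=e^{\xi\sqrt{2\sigma^2\log\eps^{-1}}}\ell_{1,1}(\Phi,2\eps)$. (Note the $\xi$ in the exponent should match what the paper intends; if the field enters as $e^{\Phi}$ rather than $e^{\xi\Phi}$ then $\xi=1$ here, which is presumably the convention in this lemma.)

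For part (2), the high-quantile statement, I would argue dually with the reverse one-sided bound: now use that every left--right crossing $\pi$ (not just the geodesic) satisfies $\int_\pi e^{\xi(\Phi+\Psi)}\,ds\ge e^{-\xi\sup_{[0,1]^2}(-\Psi)}\int_\pi e^{\xi\Phi}\,ds$ is the wrong direction — instead take a $(\Phi+\Psi)$-geodesic $\pi^*$ and bound $L^\Phi([0,1]^2)\le\int_{\pi^*}e^{\xi\Phi}\,ds\le e^{\xi\sup_{\pi^*}(-\Psi)}\int_{\pi^*}e^{\xi(\Phi+\Psi)}\,ds=e^{\xi\sup_{\pi^*}(-\Psi)}L^{\Phi+\Psi}([0,1]^2)$. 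The subtlety is that $\pi^*$ is now $\Psi$-dependent, so I cannot condition as cleanly; the fix is to take the sup of $-\Psi$ over the whole square $[0,1]^2$ rather than over $\pi^*$, at the cost of a slightly larger metric-entropy constant, and then $\sup_{[0,1]^2}(-\Psi)$ is a genuine Gaussian supremum independent of $\Phi$, so $\mathbb P(\sup_{[0,1]^2}(-\Psi)\ge u)\le\eps$ for $u\asymp\sqrt{2\sigma^2\log\eps^{-1}}$. Then on $\{\sup_{[0,1]^2}(-\Psi)<u\}\cap\{L^{\Phi+\Psi}([0,1]^2)\le\bar\ell_{1,1}(\Phi+\Psi,2\eps)\}$ we get $L^\Phi([0,1]^2)\le e^{\xi u}\bar\ell_{1,1}(\Phi+\Psi,2\eps)$, and a probability bookkeeping identical to the above gives $\bar\ell_{1,1}(\Phi+\Psi,2\eps)\le e^{\xi u}\bar\ell_{1,1}(\Phi,\eps)$ after rearranging. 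The main obstacle is precisely this asymmetry between the two parts: in (1) the geodesic is $\Phi$-measurable so one conditions and localizes the Gaussian sup to a curve, whereas in (2) one must either work with the whole-box Gaussian sup or set up a union bound over a countable dense family of crossings; either way the Gaussian-supremum tail bound (Borell--TIS, or an explicit covering argument using that $\Psi$ is a.s.\ continuous on a compact set with controlled modulus) is the technical heart, and care is needed that all the entropy/continuity constants are uniform in the approximation parameter so the lemma can be applied at every scale.
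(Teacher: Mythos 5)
The core difficulty here is that the lemma's hypotheses give you \emph{only} a pointwise variance bound on $\Psi$, with no control whatsoever on its modulus of continuity or intrinsic metric entropy. Your proposed route for both parts is to bound $\sup_{\pi}\Psi$ (or $\sup_{[0,1]^2}(-\Psi)$) via Borell--TIS or a covering/chaining argument, and you acknowledge that this produces an additive lower-order correction that you hope to ``absorb.'' But there is nothing to absorb it into: the exponent in the lemma is exactly $\sqrt{2\sigma^2\log\eps^{-1}}$, while a Borell--TIS bound has the form $\Pro(\sup\Psi\ge \E\sup\Psi + u)\le e^{-u^2/(2\sigma^2)}$, and $\E[\sup\Psi]$ is \emph{not} controlled by pointwise variance alone --- one can make $\Psi$ oscillate arbitrarily fast with pointwise variance $\sigma^2$, driving $\E[\sup_{[0,1]^2}\Psi]$ to infinity. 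So the chaining/Borell--TIS step fails under the stated hypotheses, and the ``universal metric-entropy'' claim in your sketch is false. The paper avoids this entirely: it does not bound $\sup_\pi\Psi$ at all. Instead it writes $\int_\pi e^{\Phi+\Psi}\,ds / L_{1,1}(\Phi) = \int e^{\Psi}\,d\mu$ with $\mu$ the $e^\Phi\,ds$-normalized occupation measure on $\pi(\Phi)$, applies Jensen with $\alpha>1$ to get $\bigl(\int e^{\Psi}\,d\mu\bigr)^\alpha\le\int e^{\alpha\Psi}\,d\mu$, and then Markov together with $\E[e^{\alpha\Psi(x)}]\le e^{\alpha^2\sigma^2/2}$ \emph{pointwise} and Fubini. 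Only the pointwise MGF is used, which is precisely what the hypotheses provide. This is the ``integrate pointwise'' idea you raised and then set aside --- you should have pursued it.

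There is a second, structural misreading in your part~(2). You argue that since one wants to bound $L_{1,1}(\Phi+\Psi)$ from above, one must pass through a $(\Phi+\Psi)$-geodesic $\pi^*$ and then deal with its $\Psi$-dependence by taking a global supremum. This is backwards. The one-sided conditional estimate $\Pro(\int_{\pi(\Phi)}e^{\Phi+\Psi}\,ds>e^sL_{1,1}(\Phi)\,|\,\Phi)\le\eps$ already suffices for \emph{both} tails, because $L_{1,1}(\Phi+\Psi)\le\int_{\pi(\Phi)}e^{\Phi+\Psi}\,ds$ trivially (the $\Phi$-geodesic is a competitor for $L_{1,1}(\Phi+\Psi)$). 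For the right tail, the paper decomposes on the event $\{L_{1,1}(\Phi)\le\bar\ell_{1,1}(\Phi,\eps)\}$ and on that event applies the same conditional estimate with the same $\Phi$-measurable geodesic. No $(\Phi+\Psi)$-geodesic, no global Gaussian supremum, and no entropy is ever needed. Your probability bookkeeping for part~(1) is fine, but because the key estimate is established by the wrong tool, the argument does not go through as written.
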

\begin{proof}
Fix $s := \sqrt{2 \sigma^2 \log \eps^{-1}}$ throughout the proof. Let $\pi(\Gamma)$ be a geodesic associated with the left--right crossing length for the field $\Gamma$, and define the measure $\mu$ on $\pi(\Gamma)$ by $\mu(ds) = L_{1,1}(\Gamma)^{-1}e^{\Gamma}ds$, so $\int_{\pi(\Gamma)} e^{\Gamma}ds = 1$. 
Conditionally on $\Gamma$, 
using Jensen's inequality with $\alpha = \frac{s}{2 \sigma^2} = \sqrt{(\log \eps^{-1})/(2\sigma^2)}$, which is greater than $1$ for small enough $\eps$, and Chebyshev's inequality, we have
\begin{equation}
\label{eq:MomentEst}
\Pro \left(  \int_{\pi(\Gamma)} e^{\Gamma + \Psi} ds > e^s L_{1,1}(\Gamma) ~ | ~ \Gamma \right)  \leq \Pro \left( \int_{\pi(\Gamma)} e^{\alpha \Psi}d\mu \geq e^{\alpha s} ~ | ~ \Gamma \right) \leq e^{\frac{1}{2} \alpha^2 \sigma^2} e^{-\alpha s} = e^{-\frac{s^2}{2\sigma^2}} = \eps.
\end{equation}
To bound from above $L_{1,1}(\Gamma + \Psi)$, we take a geodesic for $\Gamma$ and use the moment estimate \eqref{eq:MomentEst}.  We start with the left tail. Still with $s := \sqrt{2 \sigma^2 \log \eps^{-1}}$, we have 
\begin{align*}
\Pro \left( L_{1,1}(\Gamma) \leq \ell_{1,1}(\Gamma + \Psi,\eps) e^{-s}  \right) & \leq \Pro \left( L_{1,1}(\Gamma + \Psi) \leq e^s L_{1,1}(\Gamma), L_{1,1}(\Gamma) \leq \ell_{1,1}(\Gamma + \Psi,\eps) e^{-s} \right) \\
& \quad + \Pro \left( L_{1,1}(\Gamma + \Psi) > e^s L_{1,1}(\Gamma) \right) \\
& \leq \Pro \left( L_{1,1}(\Gamma + \Psi) \leq \ell_{1,1}(\Gamma + \Psi,\eps)  \right)  +  \Pro \left(  \int_{\pi(\Gamma)} e^{\Gamma + \Psi} ds > e^s L_{1,1}(\Gamma)\right) \leq 2 \eps.
\end{align*}
For the right tail, we have similarly that
\begin{align*}
\Pro &\left( L_{1,1}(\Gamma + \Psi) \geq \bar{\ell}_{1,1}(\Gamma, \eps) e^s  \right)  \\& \leq \Pro \left( L_{1,1}(\Gamma + \Psi) \geq \bar{\ell}_{1,1}(\Gamma, \eps) e^s,   \bar{\ell}_{1,1}(\Gamma,\eps) \geq L_{1,1}(\Gamma) \right) +  \Pro \left( L_{1,1}(\Gamma) \geq  \bar{\ell}_{1,1}(\Gamma,\eps) \right)  \\
& \leq \Pro \left(L_{1,1}(\Gamma + \Psi) \geq e^s L_{1,1}(\Gamma)\right)   +  \eps  \leq 2 \eps,
\end{align*}
which concludes the proof of the lemma.
\end{proof}

The previous reasoning does not apply directly to rectangle crossing lengths but provides the following proposition. Recall that $K$ is a compact subset of $U$. Let $A,B$ be two boundary arcs of $K$ and denote by $L$ the distance from $A$ to $B$ in $K$ for the metric $e^{\xi\phi_{0,n}}ds$; we denote $A' :=F(A)$, $B' :=F(B)$, $K' :=F(K)$, and $L'$ is the distance from $A'$ to $B'$ in $K'$ for $e^{\xi \tilde{\phi}_{0,n}}ds$. Recall that we have $| F'| \geq 1$ on $U$. In the application we will achieve this by scaling $U$ to be sufficiently small. 
\begin{Prop} 
\label{Prop:RSWconf}We have the following comparisons between quantiles. There exists $C > 0$ such that
\begin{enumerate}
\item
if for some  $l > 0$ and  $\eps < 1/2$, $\mathbb{P}\left(  L \leq l \right) \geq \eps$, then $\mathbb{P} \left( L' \leq l' \right) \geq \eps/4$ with $l' = \norme{F'}_K e^{C  \sqrt{\abs{\log\eps / 2C}}} $.

\item
if for some  $l > 0$ and  $\eps < 1/2$, $\mathbb{P}\left( L \leq l \right) \geq 1- \eps$, then $\mathbb{P} \left( L' \leq l' \right) \geq 1 - 3 \eps $
with $l' = \norme{F'}_K e^{C  \sqrt{\abs{\log\eps / 2C}}}$.
\end{enumerate}
\end{Prop}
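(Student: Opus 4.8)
The plan is to transport a near-geodesic from $K$ to $K'$ through the conformal map $F$ and to absorb the two distortions produced by $\delta\phi_l$ and $\delta\phi_h$ in the decomposition \eqref{eq:Decompo}. First I would fix $\eta>0$ and choose a path $\pi$ joining the arcs $A$ and $B$ inside $K$ with $\int_\pi e^{\xi\phi_{0,n}}\,ds\le L+\eta$; since $\phi_{0,n}$ is a smooth field the path $\pi$ can be picked $\phi_{0,n}$-measurably (minimize the smooth conformal length, with a measurable selection among minimizers), and I would send $\eta\to0$ only at the very end. As $F(\pi)$ joins $A'=F(A)$ to $B'=F(B)$ inside $K'=F(K)$ we have $L'\le\int_{F(\pi)}e^{\xi\tilde\phi_{0,n}}\,ds$, and parametrizing the image curve and using \eqref{eq:Decompo} together with $\abs{F'}\ge1$ on $U$ and $\norme{F'}_K<\infty$ yields
\begin{equation}
\label{eq:planA}
L'\ \le\ \norme{F'}_K\, e^{\xi\norme{\delta\phi_l}_K}\int_\pi e^{\xi\phi_{0,n}}\,e^{\xi\delta\phi_h}\,ds .
\end{equation}

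Next I would control the right-hand side of \eqref{eq:planA}. The field $\delta\phi_h$ is centered Gaussian, independent of $\phi_{0,n}$, with pointwise variance bounded by a constant $\sigma_h^2$ uniform in $n$; since $\pi$ is $\phi_{0,n}$-measurable, the moment computation from the proof of Lemma~\ref{Inequality} --- Jensen's inequality with exponent $\log(\eps')^{-1}$ applied to the occupation measure $d\mu:=e^{\xi\phi_{0,n}}\,ds$ carried by $\pi$, followed by Markov --- gives, for $\eps'<1/e$,
\begin{equation}
\label{eq:planB}
\Pro\Big(\textstyle\int_\pi e^{\xi\phi_{0,n}}e^{\xi\delta\phi_h}\,ds\ >\ e^{s'}(L+\eta)\ \Big|\ \phi_{0,n}\Big)\ \le\ \eps',\qquad s':=\sqrt{2\xi^2\sigma_h^2\log(\eps')^{-1}} .
\end{equation}
Separately, $\delta\phi_l$ has uniform-in-$n$ Gaussian tails, so $\Pro(\norme{\delta\phi_l}_K\ge u)\le Ce^{-cu^2}$; setting $u=u(\eps''):=\sqrt{c^{-1}\log(C/\eps'')}$ makes this probability at most $\eps''$.

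To conclude I would intersect the events $E_1=\{L\le l\}$ (which is $\phi_{0,n}$-measurable), $E_2=\{\int_\pi e^{\xi\phi_{0,n}}e^{\xi\delta\phi_h}\,ds\le e^{s'}(L+\eta)\}$ and $E_3=\{\norme{\delta\phi_l}_K\le u(\eps'')\}$: on $E_1\cap E_2\cap E_3$, \eqref{eq:planA} forces $L'\le\norme{F'}_K e^{\xi u(\eps'')}e^{s'}(l+\eta)$. Conditioning \eqref{eq:planB} on $\phi_{0,n}$ gives $\Pro(E_1\cap E_2)\ge(1-\eps')\Pro(E_1)$, hence $\Pro(E_1\cap E_2\cap E_3)\ge(1-\eps')\Pro(E_1)-\eps''$. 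For part~(1) I would take $\eps'=1/4$ and $\eps''=\eps/4$, so that $\Pro(E_1)\ge\eps$ yields $\Pro(L'\le l')\ge\eps/2\ge\eps/4$; for part~(2) I would take $\eps'=\eps''=\eps$ (legitimate, because when $\eps\ge1/e>1/3$ the target $1-3\eps<0$ is vacuous), so that $\Pro(E_1)\ge1-\eps$ yields $\Pro(E_1\cap E_2\cap E_3)\ge(1-\eps)^2-\eps\ge1-3\eps$. In both cases $\xi u(\eps'')+s'\le C\sqrt{\abs{\log(\eps/2C)}}$ after enlarging $C$, and letting $\eta\to0$ produces $l'=\norme{F'}_K\,l\,e^{C\sqrt{\abs{\log(\eps/2C)}}}$. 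The step I expect to be the genuine obstacle is the $\phi_{0,n}$-measurable selection of the near-geodesic $\pi$, since this is exactly what makes \eqref{eq:planB} a conditional statement given $\phi_{0,n}$ and so lets the high-frequency noise $\delta\phi_h$ be averaged out; the remaining ingredients --- uniformity in $n$ of $\sigma_h^2$ and of the Gaussian tails of $\delta\phi_l$ --- come from the decomposition \eqref{eq:Decompo} established above, and everything else is a union bound together with the elementary inequality $(1-\eps)^2-\eps\ge1-3\eps$.
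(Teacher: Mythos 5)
Your proof is correct and follows essentially the same route the paper sketches in Section~\ref{Sec:RSWreminder}: you take a near-geodesic for the domain-side field (chosen $\phi_{0,n}$-measurably), push it forward via $F$, absorb the conformal-distortion error through the decomposition \eqref{eq:Decompo} --- the low-frequency part as a sup-norm multiplicative error, the high-frequency part via the conditional Jensen/Markov estimate that underlies Lemma~\ref{Inequality} --- and conclude by a union bound over the three events. Your accounting of the $\eps/4$ and $1-3\eps$ constants by choosing $\eps'=1/4,\ \eps''=\eps/4$ in case (1) and $\eps'=\eps''=\eps$ in case (2) (together with the elementary inequality $(1-\eps)^2-\eps\ge 1-3\eps$ and the observation that the second assertion is vacuous once $\eps\ge 1/e$) is exactly what makes the quantiles come out as stated; you have also correctly restored the factor $l$ that is missing from $l'$ in the statement as printed. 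The only thing worth flagging is that your regularization with $\eta>0$ and the remark about $\phi_{0,n}$-measurable selection make explicit a point the paper glosses over when it simply ``takes a geodesic''; this is a fair and useful addition rather than a divergence.
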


Now, we want to prove a similar result for rectangle crossing lengths. We will need the three following lemmas that were used in \cite{DF18}. The first one is a geometrical construction, the second one is a complex analysis result and the last one comes essentially from \cite{Pitt82} together with an approximation argument. In these lemmas, by ``crossings'' we mean continuous path from marked sides to marked sides.

\begin{Lem} [Lemma~4.8 of \cite{DF18}]
\label{Lem:CrossLower}
If $a$ and $b$ are two positive real numbers with $a < b$, there exists $j = j(b/a)$ and $j$ rectangles isometric to $[0, a/2] \times [0, b/2]$  such that if $\pi$ is a left-right crossing of the rectangle $ [0,a] \times [0,b]$, at least one of the $j$ rectangles is crossed in the thin direction by a subpath of that crossing. 
\end{Lem}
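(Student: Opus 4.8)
The statement to prove is Lemma~\ref{Lem:CrossLower}: for $a<b$, we must exhibit a finite number $j=j(b/a)$ of rectangles, each isometric to $[0,a/2]\times[0,b/2]$, so that every left--right crossing of $[0,a]\times[0,b]$ crosses at least one of them in the \emph{short} ($a/2$) direction. The natural approach is a direct geometric covering argument combined with a topological/planar-separation statement. First I would place a grid of translated copies of the small rectangle $[0,a/2]\times[0,b/2]$ inside (and slightly overlapping the boundary of) the big rectangle $[0,a]\times[0,b]$: tile the horizontal extent $[0,a]$ by overlapping intervals of length $a/2$ (e.g.\ $[0,a/2]$, $[a/4,3a/4]$, $[a/2,a]$, which already covers $[0,a]$ with overlaps of width $a/4$), and tile the vertical extent $[0,b]$ by $\lceil 2b/a\rceil$ or so overlapping intervals of length $b/2$ with overlaps of width at least $a/4$. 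Taking products of these intervals gives a family of $j=j(b/a)$ axis-aligned rectangles, each a translate of $[0,a/2]\times[0,b/2]$, whose union is all of $[0,a]\times[0,b]$ and with the key feature that consecutive rectangles in each direction overlap in a strip of definite width.

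The heart of the argument is then to show that any left--right crossing $\pi$ must traverse one of these small rectangles in its short direction. Here is the topological mechanism I would use. A left--right crossing $\pi$ of $[0,a]\times[0,b]$ joins the two vertical sides $\{x=0\}$ and $\{x=a\}$. Consider the horizontal strip of rectangles whose vertical interval is, say, the middle one; more simply, observe that $\pi$ must get from $x=0$ to $x=a$, hence it must completely cross each vertical ``column'' of our covering from the column's left edge to its right edge. Using the overlap structure, one argues that $\pi$ enters some small rectangle $P=[c,c+a/2]\times[d,d+b/2]$ through a point with $x$-coordinate $\le c$ (or on the left boundary segment that is interior to the big rectangle) and leaves it through a point with $x$-coordinate $\ge c+a/2$, so that the portion of $\pi$ inside $P$ connects the two short sides $\{x=c\}$ and $\{x=c+a/2\}$ of $P$ --- i.e.\ crosses $P$ in the short direction. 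Making this precise requires tracking where $\pi$ can exit a rectangle: it can exit through a long side, but then by the overlap of the vertical intervals it immediately re-enters an adjacent small rectangle in the same column, and by finiteness and a connectedness/intermediate-value argument along the parametrization of $\pi$, at least one column must be crossed all the way through by a connected subpath lying in a single small rectangle.

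I would organize this as follows: (i) fix the explicit covering and record the overlap widths; (ii) for a crossing $\pi$, define for each column the first and last times $\pi$ is in that column and show $\pi$ restricted to a column is a path from its left edge to its right edge; (iii) within a fixed column, partition the time interval according to which of the stacked small rectangles $\pi$ currently occupies, and use the vertical overlaps plus a pigeonhole/continuity argument to find a single small rectangle whose occupation interval already carries $\pi$ from that column's left edge to its right edge; that subpath is the desired short-direction crossing. The count $j$ is just the product of the number of horizontal and vertical intervals, which depends only on $b/a$.

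\textbf{Main obstacle.} The genuinely delicate point is step (iii): ruling out the possibility that $\pi$ always ``escapes'' a small rectangle through a long side before completing a short crossing, by switching among the vertically-stacked rectangles in a column. This is a planar-topology bookkeeping argument --- morally an application of the fact that a connected set joining the two ends of a rectangle cannot be covered by small pieces none of which individually joins the two short ends, once the pieces overlap enough --- and the care lies in choosing the vertical overlap large enough (it suffices that it exceed $0$, but one wants a clean quantitative choice like $a/4$) and in handling the crossing's behavior near the top and bottom sides of the big rectangle, where a column has fewer small rectangles available. Everything else is elementary interval arithmetic. Since the paper cites this as a lemma from \cite{DF18}, I expect the intended proof is exactly this covering-plus-separation argument, stated briefly.
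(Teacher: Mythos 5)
Your covering argument has a genuine gap at exactly the point you single out as the ``main obstacle,'' and the topological claim you invoke to dispose of it is false as stated. You build your family of $j$ rectangles exclusively out of axis-aligned \emph{translates} of $[0,a/2]\times[0,b/2]$, i.e.\ portrait rectangles of width $a/2$ and height $b/2$. Consider a left--right crossing $\pi$ of $[0,a]\times[0,b]$ that oscillates rapidly in the vertical direction: it sweeps the full height $[0,b]$ up and down with period $\eps\ll a$ in the horizontal coordinate while progressing from $x=0$ to $x=a$. Any translated portrait rectangle $R=[c,c+a/2]\times[d,d+b/2]$ has height $b/2<b$, so the restriction of $\pi$ to $R$ is a union of nearly vertical arcs, each entering and leaving through the long sides $y=d$ and $y=d+b/2$ and each of horizontal extent $O(\eps)$. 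None of these arcs joins $\{x=c\}$ to $\{x=c+a/2\}$, so $\pi$ does not cross $R$ in the thin direction --- and this fails simultaneously for \emph{every} translated portrait rectangle, regardless of overlap. The assertion that ``a connected set joining the two ends of a rectangle cannot be covered by small pieces none of which individually joins the two short ends, once the pieces overlap enough'' is therefore simply not true when the pieces are all portrait; your step (iii) has no proof.

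The way to repair this is to use the full strength of ``isometric to $[0,a/2]\times[0,b/2]$'' and include \emph{rotated} (landscape) copies $[0,b/2]\times[0,a/2]$ as well, together with a dichotomy on vertical extent. Take the connected subpath $\pi_1$ of $\pi$ that crosses the central vertical strip $[a/4,3a/4]\times[0,b]$ from left to right (last exit from $x=a/4$ before first hit of $x=3a/4$), so $\pi_1\subset[a/4,3a/4]\times[0,b]$. Fix $h,h'>0$ with $h+h'\le(b-a)/2$, e.g.\ $h=h'=(b-a)/4$. If the vertical extent of $\pi_1$ is at most $b/2-h$, then with portrait rectangles placed at vertical offsets $0,h,2h,\dots$ (overlap $b/2-h$), $\pi_1$ lies entirely in one of them and crosses it in the thin direction; this is where your overlap/pigeonhole step is correct. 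If the vertical extent is at least $a/2+h'$, then $\pi_1$ contains a subpath crossing some slab $\R\times[e,e+a/2]$ from bottom to top with $e$ a multiple of $h'$; since $\pi_1\subset[a/4,3a/4]\times\R$ and $b/2\ge a/2$, that subpath sits inside a landscape rectangle of width $b/2$ and height $a/2$ and crosses it in its thin ($a/2$-) direction. The two ranges are exhaustive precisely because $b/2-h\ge a/2+h'$, and the number of rectangles needed in each family is $O\!\left(b/(b-a)\right)$, hence a function of $b/a$ only. So your construction and the outline of steps (i)--(ii) are fine, but step (iii) needs the landscape rectangles and this two-case analysis; without them the lemma is false for the family you propose.
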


\begin{Lem}[Step 1 in the proof of Theorem 3.1 in \cite{DF18}]
\label{Lem:ComparisonEllipses}
If $a/b < 1$ and $a'/b' > 1$, there exists $m,p \geq 1$ and two ellipses $E_p,E'$ with marked arcs $(AB)$, $(CD)$ for $E_p$ and $(A'B')$, $(C'D')$ for $E'$ such that:
\begin{enumerate}
\item
Any left-right crossing of $[0, a/2^p] \times [0,b/2^p]$ is a crossing of $E_p$. 

\item
Any crossing of $E'$ is a left-right crossing of $[0,a'] \times [0,b']$. 

\item
When dividing the marked sides of $E_p$ into $m$ subarcs of equal length, for any pair of such subarcs (one on each side), there exists a conformal map $F : E_p \to E'$ and the pair of subarcs is mapped to subarcs of the marked sides of $E'$.

\item 
For each pair, the associated map $F$ extends to a conformal equivalence $U \to V$ where $\overline{E_p} \subset U$, $\overline{E'} \subset V$ and $|F'| \geq 1$ on $U$. 
\end{enumerate} 
\end{Lem}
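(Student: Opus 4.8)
The final statement to prove is Lemma~\ref{Lem:ComparisonEllipses}, the complex-analysis input asserting the existence of the nested ellipses $E_p, E'$ with marked arcs and the family of conformal maps $F$. Since the paper explicitly says this lemma ``can be found in \cite{DF18}'', the natural approach is to reduce it to the corresponding statement there, or to reconstruct it directly from elementary conformal geometry.

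\textbf{Plan of proof.} I would argue as follows. First, recall that the conformal modulus of a quadrilateral (a Jordan domain with four marked boundary points) is a conformal invariant, and that a rectangle $[0,w]\times[0,h]$ with the two vertical sides marked has modulus $w/h$; an ellipse with two marked opposite arcs likewise has a well-defined modulus depending continuously and monotonically on the arc lengths. The key point is the mismatch of moduli: $[0,a]\times[0,b]$ with $a/b<1$ has small modulus, while $[0,a']\times[0,b']$ with $a'/b'>1$ has large modulus, so there is \emph{no} conformal map between them respecting the marked sides. The device of \cite{DF18} circumvents this by (i) shrinking the source rectangle by a dyadic factor $2^{-p}$ — which does not change its modulus, since modulus is scale-invariant, but makes it geometrically small — and then (ii) replacing the exact ``map rectangle to rectangle'' requirement by the weaker ``crossings of $E_p$ contain crossings of a rectangle'' and ``crossings of $E'$ are contained in crossings of a rectangle'' requirements in items 1--2, which frees up the three real degrees of freedom of a conformal map. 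Concretely: choose the ellipse $E'$ circumscribing $[0,a']\times[0,b']$ with marked arcs $(A'B')$, $(C'D')$ chosen as the arcs cut off by the two short sides, so that item 2 holds by inclusion; and choose, for each dyadic $p$, an ellipse $E_p$ contained in $[0,a/2^p]\times[0,b/2^p]$ and abutting its two long sides, with marked arcs chosen so item 1 holds. One then picks $m$ large enough that, after subdividing each marked arc of $E_p$ into $m$ equal subarcs, every pair of opposing subarcs, viewed as a sub-quadrilateral of $E_p$, has modulus within the range of moduli attained by the quadrilateral $E'$ with its \emph{fixed} marked arcs $(A'B'),(C'D')$ — so that, by the Riemann mapping theorem for quadrilaterals, a conformal map $F:E_p\to E'$ sending that pair of subarcs into the marked arcs of $E'$ exists. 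This gives items 3 and the existence part of 4.

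\textbf{The $|F'|\ge 1$ normalization and the extension to a neighborhood.} The remaining, and genuinely delicate, point is item 4: that $F$ extends to a conformal equivalence $U\to V$ of slightly larger domains with $|F'|\ge 1$ on $U$. The extension itself is routine: $E_p$ and $E'$ have real-analytic boundary, so by the reflection principle (or simply because they are analytic Jordan curves) $F$ extends conformally across $\partial E_p$ to some neighborhood $U\supset\overline{E_p}$, with $F(U)=:V\supset\overline{E'}$, shrinking $U$ if necessary so that $F$ remains injective there (possible since $F$ is injective on the compact set $\overline{E_p}$ and conformal, hence locally injective, so injective on a neighborhood). The $|F'|\ge1$ bound is where the dyadic parameter $p$ does its work: the derivative of the uniformizing map between two quadrilaterals of comparable modulus is bounded below in terms of their diameters, and since $E_p\subset [0,a/2^p]^2$-ish has diameter $O(2^{-p})$ while $E'$ has diameter $\asymp 1$, we get $|F'|\gtrsim 2^p$ on $\overline{E_p}$; choosing $p$ large makes $|F'|\ge1$ on $\overline{E_p}$, and then on a small enough $U$ by continuity. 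I would make this quantitative via the standard estimate $\mathrm{diam}\, F(E_p)\le C(m)\,\mathrm{diam}(E_p)\cdot \sup_{\overline{E_p}}|F'|$ together with a Koebe-type distortion bound comparing $\sup|F'|$ and $\inf|F'|$ on the slightly smaller $E_p$ (distortion is controlled since the modulus is bounded and $F$ extends past the boundary).

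\textbf{Main obstacle.} The conceptual content — moduli, existence of the maps by Riemann mapping for quadrilaterals — is classical; the only real work is bookkeeping the two geometric inclusions (items 1 and 2) simultaneously with the modulus-matching (item 3) and then securing the uniform lower bound $|F'|\ge1$ by exploiting the scale separation from $p$. I expect the $|F'|\ge1$ estimate, controlled via Koebe distortion on the extended domain, to be the main technical point; everything else is a matter of choosing $E'$ as a circumscribed ellipse and $E_p$ as an inscribed one, subdividing finely enough, and invoking scale-invariance of the modulus. Since this is verbatim Lemma~5.x of \cite{DF18} (the statement is word-for-word parallel), in the write-up I would either cite it directly or reproduce the short argument above; the field-mollifier change that motivates this paper plays no role here, as the lemma is purely about conformal geometry.
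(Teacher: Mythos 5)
The paper does not prove this lemma; it states that Lemmas \ref{Lem:CrossLower}--\ref{Lem:FKG} ``can be found in \cite{DF18}'' and leaves it at that, so there is no in-text argument to compare against. Evaluating your reconstruction on its own terms: the high-level strategy (moduli of quadrilaterals, Riemann mapping for quadrilaterals after subdividing into $m$ subarcs, the role of $p$ in forcing $|F'|\geq 1$ via the scale mismatch, Schwarz reflection across the analytic boundary for the extension to $U\to V$) is the right one and does match what one finds in \cite{DF18}. But the concrete geometric setup you propose fails items 1 and 2, and the ``holds by inclusion'' claims are not true.

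Take item 2 first. You put $E'$ as the ellipse circumscribing $R'=[0,a']\times[0,b']$ with the marked arcs cut off by the two short sides. Then $R'\subset E'$, so $\overline{E'}\not\subset\overline{R'}$, and a crossing of $E'$ need not stay in $R'$: it can start on $(A'B')$, run up through the lune between the top arc of $E'$ and the top side of $R'$, and end on $(C'D')$ while only grazing the boundary of $R'$. Such a path is not a left-right crossing of $R'$, nor does it contain one as a subpath, so item 2 does not ``hold by inclusion.'' What is actually needed is an ellipse whose marked arcs protrude to the left and to the right of $R'$, while the ellipse is confined to the horizontal strip $\{0\leq y\leq b'\}$; then any path from one marked arc to the other must cross $\{x=0\}$ and $\{x=a'\}$ at heights in $(0,b')$, so its sub-path between the last hit of the left side of $R'$ and the first subsequent hit of the right side is trapped in $R'$.

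Item 1 has a dual problem. You place $E_p$ inside $R_p$ abutting only the two long (left/right) sides of the portrait rectangle; such an ellipse can be arbitrarily short in the vertical direction, and a left-right crossing of $R_p$ can then slide above or below $E_p$ without meeting it at all. No choice of marked arcs rescues this. To force every left-right crossing of $R_p$ to contain a crossing of $E_p$, you need $E_p$ to separate the left side of $R_p$ from the right side inside $R_p$; the standard way to do this is to take the inscribed ellipse tangent to all four sides and declare the two marked arcs to be those separated by the top and bottom tangent points (so each marked arc contains one of $M_L,M_R$). Then starting from the left side of $R_p$ one cannot enter $\overline{E_p}$ except across $(AB)$, and one cannot exit toward the right side except across $(CD)$, which produces the desired subpath. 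So the inscribed/circumscribed dichotomy you use is, in effect, backwards, and more importantly the ellipses must be chosen to \emph{block} the relevant crossings in the transverse direction rather than merely to touch the sides the crossings start from. Once the geometry is set up this way, your modulus-matching argument for item 3 and your Koebe/reflection argument for item 4 go through as you sketch, with $p$ supplying the derivative lower bound and $m$ keeping the quadrilateral moduli in a compact range.
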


We refer the reader to Figure \ref{fig:illustration-lemma} for an illustration.

\begin{figure}[ht]
\centering
\includegraphics[scale=1]{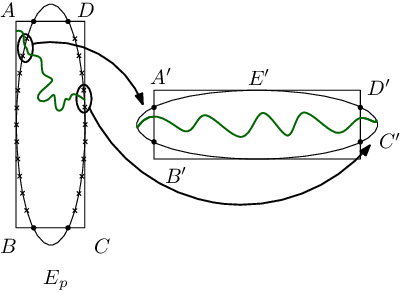}
\caption{Illustration of Lemma \ref{Lem:ComparisonEllipses}.} 
\label{fig:illustration-lemma}
\end{figure}

\begin{Lem}[Positive association and square-root-trick]
\label{Lem:FKG} If $k \geq 2$ and $(R_1, \dots, R_k)$ denote a collection of $k$ rectangles, then, for $(x_1, \dots, x_k) \in (0,\infty)^k$, we have
$$
\Pro \left( L^{(n)}(R_1) > x_1, \dots, L^{(n)}(R_k) > x_k \right) \geq \Pro \left( L^{(n)}(R_1) > x_1 \right) \cdots   \Pro \left( L^{(n)}(R_k) > x_k \right).
$$
An easy consequence of this positive association is the so-called ``square-root-trick":
$$
\max_{i \leq k} \Pro \left( L^{(n)}(R_i) \leq x_i \right) \geq 1 - \left( 1 - \Pro \left( \exists i \leq k : L^{(n)}(R_i) \leq x_i \right) \right)^{1/k}.
$$
\end{Lem}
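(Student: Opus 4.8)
\emph{Plan.} The plan is to obtain the positive association from Pitt's theorem \cite{Pitt82} --- a Gaussian vector with pairwise nonnegative covariances is associated, i.e.\ satisfies the FKG inequality $\E[f_1\cdots f_k]\geq \E[f_1]\cdots\E[f_k]$ for bounded nonnegative coordinatewise-nondecreasing functionals --- combined with a monotonicity and approximation argument, and then to deduce the square-root trick by a one-line computation. Fix a compact square $D$ containing $R_1,\dots,R_k$. Two structural facts are needed. First, for any rectangle $R\subset D$ the crossing length $\phi\mapsto L^{(n)}(R)=\inf_{\pi}\int_\pi e^{\xi\phi_{0,n}}\,ds$ (and likewise with $\psi_{0,n}$) is a \emph{nondecreasing} functional of the field for the pointwise order on $C(D)$: if $\phi\le\phi'$ on $D$ then $e^{\xi\phi}\le e^{\xi\phi'}$, so the length of every crossing increases and hence so does the infimum. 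It is moreover continuous for the uniform topology on $C(D)$: a uniform perturbation of the field perturbs $e^{\xi\phi_{0,n}}$ uniformly on $D$, the relevant infimum is attained along (or approximated by) crossings of uniformly bounded length, and along those the integrals converge uniformly --- so $L^{(n)}(R)$ is in particular Borel. Second, the covariance kernel of the relevant field is everywhere nonnegative: by the scale decomposition, $\E(\phi_{0,n}(x)\phi_{0,n}(x'))=\int_{2^{-2n}}^{1}\frac{1}{2t}e^{-|x-x'|^2/(2t)}\,dt\geq 0$, and $\E(\psi_{0,n}(x)\psi_{0,n}(x'))=\int_{2^{-2n}}^{1}\tilde p_{t/2}\ast\tilde p_{t/2}(x-x')\,dt\geq 0$ since $\tilde p_{t/2}\geq 0$.

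Next I would run the approximation from Pitt's finite-dimensional statement to the continuous field. Take a nested sequence $K_m$ of finite $2^{-m}$-nets of $D$ together with triangulations having vertex sets $K_m$, and let $X^{(m)}:=\Phi_m\big((\phi_{0,n}(v))_{v\in K_m}\big)\in C(D)$ be the piecewise-linear interpolant; on each triangle the interpolant is a convex combination of the vertex values, so $\Phi_m:\R^{K_m}\to C(D)$ is continuous and nondecreasing for the pointwise orders, and by uniform continuity of the smooth field $\phi_{0,n}$ on $D$ one has $\norme{X^{(m)}-\phi_{0,n}}_{D}\to 0$ almost surely. By nonnegativity of the covariance kernel the finite Gaussian vector $(\phi_{0,n}(v))_{v\in K_m}$ is associated (Pitt); composing with $\Phi_m$, the bounded continuous nondecreasing functionals $G_i^\eta(\phi):=\min\!\big(1,(L^{(n)}(R_i)(\phi)-x_i)^+/\eta\big)$ of $C(D)$ become bounded nonnegative nondecreasing functions of that vector, so FKG gives $\E\big[\prod_i G_i^\eta(X^{(m)})\big]\geq\prod_i\E\big[G_i^\eta(X^{(m)})\big]$; letting $\eta\downarrow 0$ (monotone convergence, since $G_i^\eta\uparrow\ind_{\{L^{(n)}(R_i)>x_i\}}$) yields
\begin{equation*}
\Pro\Big(\bigcap_{i=1}^k\{L^{(n)}(R_i)(X^{(m)})>x_i\}\Big)\ \geq\ \prod_{i=1}^k\Pro\big(L^{(n)}(R_i)(X^{(m)})>x_i\big).
\end{equation*}
Finally, since $L^{(n)}(R_i)$ is continuous on $C(D)$ and $X^{(m)}\to\phi_{0,n}$ a.s., we get $L^{(n)}(R_i)(X^{(m)})\to L^{(n)}(R_i)$ a.s.; because the law of each $L^{(n)}(R_i)$ has at most countably many atoms, $\ind_{\{L^{(n)}(R_i)(X^{(m)})>x_i\}}\to\ind_{\{L^{(n)}(R_i)>x_i\}}$ a.s.\ for every $x_i$ outside a countable set, so passing to the limit gives the claimed inequality for those $x_i$, and then for all $x_i$ by approaching from above and using right-continuity in the $x_i$ of both sides.

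For the square-root trick, set $p_i:=\Pro(L^{(n)}(R_i)>x_i)$, so $\min_{i\le k}p_i=1-\max_{i\le k}\Pro(L^{(n)}(R_i)\le x_i)$. The positive association just proved gives
\begin{equation*}
1-\Pro\big(\exists\,i\le k:\ L^{(n)}(R_i)\le x_i\big)\ =\ \Pro\Big(\bigcap_{i=1}^k\{L^{(n)}(R_i)>x_i\}\Big)\ \geq\ \prod_{i=1}^k p_i\ \geq\ \big(\min_{i\le k}p_i\big)^k,
\end{equation*}
and taking $k$-th roots and rearranging yields $\max_{i\le k}\Pro(L^{(n)}(R_i)\le x_i)\geq 1-\big(1-\Pro(\exists\,i\le k:\ L^{(n)}(R_i)\le x_i)\big)^{1/k}$.

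The main obstacle is the approximation step: turning the genuinely infinite-dimensional functional $L^{(n)}(R_i)$ into increasing functions of finitely many Gaussian coordinates to which Pitt's theorem literally applies. This forces one to be careful that the interpolation $\Phi_m$ is both continuous and order-preserving, that $X^{(m)}\to\phi_{0,n}$ uniformly a.s.\ (which uses only continuity of the smooth field), that $\phi\mapsto L^{(n)}(R_i)(\phi)$ is continuous on $C(D)$ (here the only delicate point is restricting to crossings of bounded length), and that one can pass from bounded continuous nondecreasing functionals --- for which closure under the weak limit $m\to\infty$ is transparent --- to the indicator functionals $\ind_{\{L^{(n)}(R_i)>x_i\}}$. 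Each of these points is routine, but assembling them is precisely what is meant by ``Pitt \cite{Pitt82} together with an approximation argument''.
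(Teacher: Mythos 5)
Your proposal is correct and follows exactly the recipe the paper gestures at: the paper does not prove Lemma~\ref{Lem:FKG}, merely remarking that it ``comes essentially from \cite{Pitt82} together with an approximation argument,'' and your argument is a faithful, careful fleshing-out of that hint (nonnegativity of the covariance kernels of $\phi_{0,n}$ and $\psi_{0,n}$, Pitt's association for the finite-dimensional marginals, monotone continuous interpolation to pass to the $C(D)$-valued field, approximation of indicators by bounded continuous nondecreasing functionals, and the elementary derivation of the square-root trick). One small simplification worth noting for the continuity step: a uniform bound $\norme{\phi-\phi'}_D\le\eps$ directly yields $e^{-\xi\eps}L^{(n)}(R)(\phi')\le L^{(n)}(R)(\phi)\le e^{\xi\eps}L^{(n)}(R)(\phi')$, which gives Lipschitz continuity of $\log L^{(n)}(R)$ on $C(D)$ without needing to restrict to bounded-length crossings.
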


The main result of this section, Proposition \ref{Prop:RSWphi}, is a rephrasing of the following one.
\begin{Prop}
We have the following comparisons between quantiles. If $a/b < 1$ and $a'/b' > 1$, there exists $C > 0$ such that, for any $\eps \in (0,1/2)$,
\begin{enumerate}
\item
\label{item:CompLowQuantilies}
if $\mathbb{P} \left( L_{a,b}^{(n)} \leq l \right) \geq \eps$, then $\mathbb{P}\left( L_{a',b'}^{(n)} \leq C l e^{C \sqrt{\abs{\log \eps / C}}} \right) \geq \eps/C$,

\item
\label{item:CompHighQuantiles}
and if  $\mathbb{P} \left( L_{a,b}^{(n)} \leq l \right) \geq 1 - \eps$, then $\mathbb{P}\left( L_{a',b'}^{(n)} \leq C l e^{C \sqrt{\abs{\log \eps /C}}} \right) \geq 1 - 3 \eps^{1/C}$.
\end{enumerate}
\end{Prop}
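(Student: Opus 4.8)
The plan is to run the conformal RSW pipeline of \cite{DF18}, chaining the four ingredients above; I will carry out part~(1) in detail, since part~(2) follows from the same argument with Proposition~\ref{Prop:RSWconf}(2) in place of Proposition~\ref{Prop:RSWconf}(1). First I would fix $p=p(a/b,a'/b')$ large enough that the ellipses $E_p,E'$ and the conformal maps of Lemma~\ref{Lem:ComparisonEllipses} are available, and iterate Lemma~\ref{Lem:CrossLower} $p$ times. This produces $N:=j(b/a)^p$ rectangles $R_1,\dots,R_N$, each isometric to $R_{a/2^p,b/2^p}$, such that any left--right crossing of $R_{a,b}$ contains a subpath which is a thin-direction (hence left--right) crossing of one of them; the subpath is no longer than the whole path, so $\{L_{a,b}^{(n)}\le l\}\subset\bigcup_{i\le N}\{L^{(n)}(R_i)\le l\}$. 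From $\Pro(L_{a,b}^{(n)}\le l)\ge\eps$ and the square-root trick (Lemma~\ref{Lem:FKG}) some $R_{i_0}$ satisfies $\Pro(L^{(n)}(R_{i_0})\le l)\ge 1-(1-\eps)^{1/N}\ge\eps/N$, and by Euclidean-isometry invariance of $\phi_{0,n}$ the same bound holds for the canonical rectangle $R_{a/2^p,b/2^p}$.

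Next I would pass to the ellipse $E_p$. By Lemma~\ref{Lem:ComparisonEllipses}(1) a left--right crossing of $R_{a/2^p,b/2^p}$ is a crossing of $E_p$ between its marked arcs $(AB)$ and $(CD)$, so the $e^{\xi\phi_{0,n}}ds$-distance in $\overline{E_p}$ between $(AB)$ and $(CD)$ is $\le l$ with probability $\ge\eps/N$. Writing $(AB)=\bigcup_u I_u$ and $(CD)=\bigcup_v J_v$ for the $m$ equal subarcs of Lemma~\ref{Lem:ComparisonEllipses}(3), and noting that a crossing realizing that distance has its two endpoints in some $I_u$ and $J_v$, this event equals $\bigcup_{u,v}\{\dist_{\overline{E_p}}(I_u,J_v)\le l\}$. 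Since each event $\{\dist\le l\}$ is decreasing in the field, the Gaussian positive association underlying Lemma~\ref{Lem:FKG} lets me apply the square-root trick a second time and fix a pair $(I_{u_0},J_{v_0})$ with $\Pro(\dist_{\overline{E_p}}(I_{u_0},J_{v_0})\le l)\ge\eps/(Nm^2)=:\eps'$.

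Then I would transfer through the conformal map. Lemma~\ref{Lem:ComparisonEllipses}(3)--(4) gives a conformal $F$ that extends to $U\to V$ with $\overline{E_p}\subset U$, $\overline{E'}\subset V$, $\abs{F'}\ge 1$ on $U$, and carries $(I_{u_0},J_{v_0})$ onto subarcs of the marked sides of $E'$. Applying Proposition~\ref{Prop:RSWconf}(1) with $K=\overline{E_p}$, $A=I_{u_0}$, $B=J_{v_0}$ yields that, with probability $\ge\eps'/4$, the $e^{\xi\tilde\phi_{0,n}}ds$-distance in $\overline{E'}$ between $F(I_{u_0})$ and $F(J_{v_0})$ is at most $l\norme{F'}_K e^{C_0\sqrt{\abs{\log\eps'/2C_0}}}$. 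A path attaining that distance joins $(A'B')$ to $(C'D')$, hence is a crossing of $E'$, hence by Lemma~\ref{Lem:ComparisonEllipses}(2) a left--right crossing of $R_{a',b'}$; since $\tilde\phi_{0,n}\overset{(d)}{=}\phi_{0,n}$ this gives $\Pro\big(L_{a',b'}^{(n)}\le l\norme{F'}_K e^{C_0\sqrt{\abs{\log\eps'/2C_0}}}\big)\ge\eps'/4$. Because $p,N,m,\norme{F'}_K,C_0$ depend only on $(a/b,a'/b')$, enlarging the constant (and using $\sqrt{\abs{\log(\eps/C_1)}}\le\sqrt{\abs{\log\eps}}+\sqrt{\log C_1}$) turns this into $\Pro(L_{a',b'}^{(n)}\le Cl\,e^{C\sqrt{\abs{\log\eps/C}}})\ge\eps/C$, which is~(1).

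Part~(2) follows the same three steps, except that the two square-root tricks now turn $\Pro(L_{a,b}^{(n)}\le l)\ge 1-\eps$ into $\Pro(\dist_{\overline{E_p}}(I_{u_0},J_{v_0})\le l)\ge 1-\eps^{1/(Nm^2)}$, and Proposition~\ref{Prop:RSWconf}(2) then gives $\Pro(L_{a',b'}^{(n)}\le\cdots)\ge 1-3\eps^{1/(Nm^2)}$. The one genuine subtlety — and what I expect to be the only real obstacle — is that Proposition~\ref{Prop:RSWconf}(2) requires its deficit $\eps^{1/(Nm^2)}$ to be $<1/2$, which fails unless $\eps$ is small; I would handle this by taking $C\ge Nm^2$, so that for $\eps\ge 2^{-Nm^2}$ the claimed bound $1-3\eps^{1/C}\le 0$ is vacuous, while for $\eps<2^{-Nm^2}$ one has $\eps^{1/(Nm^2)}<1/2$ and may run the argument, finally absorbing $3\eps^{1/(Nm^2)}\le 3\eps^{1/C}$. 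Everything else is bookkeeping: propagating the length threshold through the factors $\norme{F'}_K$ and $e^{C\sqrt{\abs{\log\eps}}}$, and checking that each appeal to the square-root trick costs only a polynomial factor in the probability.
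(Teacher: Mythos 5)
Your proposal is correct and runs the same pipeline as the paper: iterate Lemma~\ref{Lem:CrossLower}, pass to the ellipse $E_p$, subdivide the marked arcs, apply Proposition~\ref{Prop:RSWconf}, and return via Lemma~\ref{Lem:ComparisonEllipses}(2). The only cosmetic deviations are that you invoke the square-root trick plus Bernoulli's inequality in the low-quantile case where the paper just uses a union bound (both give $\eps/(j^pm^2)$), and you make explicit the observation that the conclusion of part~(2) is vacuous unless $\eps^{1/(Nm^2)}<1/2$, a point the paper leaves implicit.
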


\begin{proof}
We provide first a comparison between low quantiles and then a comparison between high quantiles.

\textit{Step 1:}  Comparison of small quantiles. Suppose $\mathbb{P} (L_{a,b}^{(n)} \leq l ) \geq \eps$. By Lemma \ref{Lem:CrossLower} and union bound, $\mathbb{P} (L_{a/2,b/2}^{(n)}  \leq l ) \geq \eps/j$. Furthermore, by iterating, we have $\mathbb{P}( L_{a/2^p,b/2^p}^{(n)} \leq l ) \geq \eps / j^p$. Under this event, by Lemma \ref{Lem:ComparisonEllipses}, there exists a crossing of $E_p$ between two subarcs of $E_p$ (one on each side) hence with probability at least $ \eps/ (j^{p} m^2 )$, one of these crossings has length at most $l$. By the left tail estimate Proposition \ref{Prop:RSWconf} and Lemma \ref{Lem:ComparisonEllipses}, we obtain a $C > 0$ (depending also on $\norme{F'}_{\overline{E_p}}$) such that for all $\eps, l > 0$:
$$
\mathbb{P} \left( L_{a,b}^{(n)} \leq l \right) \geq \eps \Rightarrow \mathbb{P}\left( L_{a',b'}^{(n)} \leq C l e^{C \sqrt{\abs{\log \eps / (2C j^p m^2)}}} \right) \geq \eps/(4 j^p m^2),
$$
hence the first assertion.

\textit{Step 2:} Comparison of high quantiles.   Now suppose $\mathbb{P} (L_{a,b}^{(n)} \leq l ) \geq 1- \eps$. By Lemma \ref{Lem:CrossLower} (to start with a crossing at a lower scale) and Lemma \ref{Lem:FKG} (square-root-trick), we have $\mathbb{P} (L_{a/2,b/2}^{(n)}  \leq l ) \geq 1-  \eps^{1/j}$. Furthermore, by iterating, we have $\mathbb{P}( L_{a/2^p,b/2^p}^{(n)} \leq l ) \geq 1 - \eps^{1/j^p}$. On the event $\lbrace L_{a/2^p,b/2^p}^{(n)} \leq l \rbrace$, the ellipse $E_p$ from Lemma \ref{Lem:ComparisonEllipses} has a crossing of length $\leq l$ between two marked arcs. Again by subdividing each its marked arcs into $m$ subarcs and applying the square-root trick, we see that for at least one pair of subarcs, there is a crossing of length $\leq l$ with probability $\geq 1 - \eps^{j^{-p} m^{-2}}$. Combining with the right-tail estimate Proposition \ref{Prop:RSWconf} and Lemma \ref{Lem:ComparisonEllipses}, we get:
\begin{equation}
\mathbb{P} \left( L_{a,b}^{(n)} \leq l \right) \geq 1 - \eps \Rightarrow \mathbb{P}\left( L_{a',b'}^{(n)} \leq C l e^{C \sqrt{\abs{\log \eps /C}}} \right) \geq 1 - 3 \eps^{1/C},
\end{equation}
which completes the proof.
\end{proof} 

\begin{Rem}
The importance of the Russo-Seymour-Welsh estimates comes from the following: percolation arguments/estimates work well when taking small quantiles associated with short crossings and high quantiles associated with long crossings. Thanks to the RSW estimates, we can instead keep track only of low and high quantiles associated to the unit square crossing, ${\ell_n}(p)$ and ${\bar{\ell}_n}(p)$.
\end{Rem}

\section{Tail estimates with respect to fixed quantiles}

\label{Sec:TailEstimates}

\paragraph{Lower tails.} This is where we take $r_0$ small enough (recall the definition \eqref{Def:sigma}) to obtain some small range of dependence of the field $\psi$ so that a Fernique-type argument works.

\begin{Prop}[Lower tail estimates for $\psi$]
We have the following lower tail estimate: for $p$ small enough, but fixed, there is a constant $C$ so that for all $s > 0$,
\begin{equation}
\label{eq:LowerTailsPsi}
\Pro \left( L_{1,3}^{(n)}(\psi) \leq e^{-s} {\ell_n}(\psi,p) \right) \leq C e^{- c s^2}.
\end{equation}
\end{Prop}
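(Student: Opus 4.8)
The plan is to establish the lower tail bound \eqref{eq:LowerTailsPsi} by combining the RSW comparison estimates of Corollary~\ref{Cor:RSWpsi} with a Fernique-type multiscale resummation that exploits the finite range of dependence of $\psi$ at each scale. The basic mechanism is that if a left--right crossing of $R_{1,3}$ is unusually short, say of length $\le e^{-s}\ell_n(\psi,p)$, then by subdividing $R_{1,3}$ into $O(1)$ translates of rectangles of a fixed aspect ratio and using the triangle inequality for crossing distances, one of those subrectangles must also have an anomalously short crossing, at a scale one unit coarser. Iterating this across $\sim s$ scales, an anomalously short crossing at the top level forces a cascade of anomalously short crossings at finer and finer scales in nested subrectangles, and at each scale these events are \emph{spatially localized} (thanks to the finite range of dependence of the block fields $\psi_{k,P}$). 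The probability of such a cascade can then be bounded by a product, which yields the Gaussian-type decay $Ce^{-cs^2}$ once we track how the ``deficit'' $e^{-s}$ must be distributed among the scales.

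Concretely, I would proceed as follows. First, using Lemma~\ref{Lem:CrossLower} together with the scaling property $\psi_{a,b}(r\cdot)\overset{(d)}{=}\psi_{a/r,b/r}(\cdot)$ and Corollary~\ref{Cor:RSWpsi}, reduce the statement about $L_{1,3}^{(n)}(\psi)$ to a statement about the unit-square crossing quantiles $\ell_n(\psi,\cdot)$: a short crossing of $R_{1,3}$ at scale $n$ implies, with comparable probability (paying only an $e^{C\sqrt{|\log\eps|}}$ factor each time we invoke RSW), a short crossing of a unit square at scale $n-k$ for an appropriate number $k$ of coarsenings. Second, I would set up the iteration: writing $L_{1,1}^{(n)}(\psi)\ge$ (a sum of $\sim 2^j$ crossing distances of disjoint subsquares at scale $n-j$, each governed by an independent-at-that-scale block field, minus corrections), I would show that the event $\{L_{1,1}^{(n)}(\psi)\le e^{-s}\ell_n(\psi,p)\}$ forces, for every dyadic level $j\le J$ with $J\asymp s$, that at least a positive fraction of the scale-$(n-j)$ subsquares have crossing distance $\le e^{-s_j}\ell_{n-j}(\psi,p)$ for some deficits $s_j$ with $\sum_j s_j\gtrsim s$. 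Third, I would invoke the finite range of dependence: at each scale, a given block field is correlated with only $O(1)$ others, so the relevant collection of ``bad subsquare'' events splits into $O(1)$-size clusters of independent events, and a union bound over which fraction of boxes is bad, combined with the base estimate that $\Pro(L\le e^{-t}\ell)\le$ (something summable in $t$) at a single scale — itself a consequence of the definition of the quantile and a crude a priori bound using $\inf\psi_{0,n}$ and the max bounds \eqref{eq:MaxBoundTail} — gives, after optimizing the allocation $s=\sum s_j$, a bound of the form $\exp(-c\sum_j s_j^2)\le \exp(-cs^2/J)\le\exp(-c's^2/s)$; a more careful Fernique-style bookkeeping (as in \cite{DF18}) recovers the full $e^{-cs^2}$.

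The main obstacle I anticipate is the second step: correctly formalizing ``a short crossing of the big box forces many short crossings of small boxes with deficits summing to $s$'' so that the final resummation genuinely produces $s^2$ in the exponent rather than just $s$. The subtlety is that the crossing distance is a minimum over paths, and a short geodesic need not decompose cleanly into short sub-geodesics of a prescribed family of subboxes; one must instead argue that a geodesic crossing $R_{1,3}$ at scale $n$ necessarily induces a crossing in the thin direction of one of the fixed subrectangles of Lemma~\ref{Lem:CrossLower}, whose length is bounded by the full length, and then one controls how the quantile $\ell$ scales across dyadic levels. Keeping the RSW error terms $e^{C\sqrt{|\log\eps|}}$ under control through the $\sim s$ iterations — so that they contribute only $e^{O(s^{3/2})}=e^{o(s^2)}$ and are absorbed — is the other delicate bookkeeping point. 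Since this is precisely the Fernique argument carried out for the $\star$-scale invariant field in \cite{DF18}, and $\psi$ was constructed specifically to have the same finite-range-of-dependence structure, I expect the argument there to transfer essentially verbatim, with Corollary~\ref{Cor:RSWpsi} and Lemma~\ref{lem:FieldScale} supplying the needed inputs.
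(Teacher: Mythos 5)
Your proposal does not reproduce the paper's argument, and as written it has a genuine gap that your own calculation exposes: you arrive at $\exp(-cs^2/J)$ with $J\asymp s$, i.e.\ $\exp(-c's)$, and then appeal to ``a more careful Fernique-style bookkeeping'' to upgrade this to $e^{-cs^2}$ without saying what that bookkeeping is. That missing step is precisely the content of the proof.

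The mechanism in the paper is structurally different from what you describe. You propose iterating over \emph{field scales}, passing from $\psi_{0,n}$ to $\psi_{0,n-j}$ and distributing a deficit $s=\sum_j s_j$ across $\sim s$ levels. The paper stays entirely at the fixed scale $n$; the iteration happens in the space of (quantile, probability) pairs. Concretely: if $L_{3,3}^{(n)}(\psi)\le l$ then the two disjoint sub-rectangles $[0,1]\times[0,3]$ and $[2,3]\times[0,3]$ each have a short crossing, and because $\psi$ has short range of dependence (with $r_0$ chosen small) those two crossing events are \emph{independent}, giving
\[
\Pro\bigl(L_{3,3}^{(n)}(\psi)\le l\bigr)\le \Pro\bigl(L_{1,3}^{(n)}(\psi)\le l\bigr)^2 .
\]
Then RSW \eqref{eq:LowQuantilesPsi} converts an upper bound on $\Pro(L_{3,3}^{(n)}\le l)$ into one on $\Pro(L_{1,3}^{(n)}\le l')$ at a slightly smaller threshold $l'=l\,C^{-1}e^{-C\xi\sqrt{|\log C\eps|}}$. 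Combining these two facts produces a recursion $p_{i+1}=(Cp_i)^2$, $r_{i+1}^{(n)}=r_i^{(n)}C^{-1}\exp(-C\xi\sqrt{|\log Cp_i|})$, and by induction $\Pro(L_{3,3}^{(n)}(\psi)\le r_i^{(n)})\le p_i$. The probability decays \emph{doubly} exponentially, $p_i\sim (p_0C^2)^{2^i}$, while the threshold deficit accumulates only as $\sum_{k<i}\sqrt{|\log p_k|}\sim 2^{i/2}$; choosing $i$ so that $2^{i/2}\sim s$ gives threshold $\approx\ell\,e^{-s}$ and probability $\approx e^{-c\,2^i}=e^{-cs^2}$. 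This doubly-exponential collapse of the probability is what produces the Gaussian tail, and it has no analogue in your deficit-allocation scheme: there, a union bound over allocations $(s_j)$ with $\sum s_j=s$ across $J\asymp s$ scales can at best give $\exp(-c\min_{(s_j)}\sum s_j^2)=\exp(-cs^2/J)=\exp(-c's)$, which is strictly weaker. Your intermediate claim that a short crossing of $[0,1]^2$ at scale $n$ forces a \emph{positive fraction} of scale-$(n-j)$ subsquares to be anomalously short for every $j\le J$ is also not justified and I do not think it is true as stated (a short geodesic could be short because it threads a thin chain of favorable boxes, not because a density of boxes is short).

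To fix your proposal, replace the multiscale deficit allocation with the single-scale bootstrap described above: work at scale $n$ throughout, use finite range of dependence to get independence of the two thin crossings inside $R_{3,3}$ (hence the squaring), and use Corollary~\ref{Cor:RSWpsi} to move between $L_{3,3}^{(n)}$ and $L_{1,3}^{(n)}$ while tracking how the threshold shrinks. The Gaussian tail then drops out of the arithmetic $p_i\sim p_0^{2^i}$ versus $\log r_i\sim -2^{i/2}$; no allocation optimization is needed.
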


\begin{proof}
The RSW estimate \eqref{eq:LowQuantilesPsi} gives
\begin{equation}
\label{eq:RSWreUse}
\mathbb{P}\left( L_{3,3}^{(n)}(\psi) \leq l \right) \leq \eps \Rightarrow \mathbb{P}\left( L_{1,3}^{(n)}(\psi) \leq l C^{-1} e^{-C \xi \sqrt{ | \log C \eps |}} \right) \leq C \eps
\end{equation}
Now, if $L_{3,3}^{(n)}(\psi)$ is less than $l$, then both $[0,1] \times [0,3]$ and $[2,3] \times [0,3]$ have a left-right crossing of length $\leq l$ and the restrictions of the field to these two rectangles are independent (if $r_0$ defined in \eqref{Def:sigma} is small enough). Consequently, 
\begin{equation}
\label{decaysquare}
\mathbb{P}\left( L_{3,3}^{(n)}(\psi) \leq l \right) \leq \mathbb{P}\left( L_{1,3}^{(n)}(\psi) \leq l \right)^2
\end{equation}
Take $p_0$  small, such that $C^2 p_0 < 1$ where $C$ is the constant in \eqref{eq:RSWreUse} and set $r_0^{(n)} := \ell_{3,3}^{(n)}(\psi,p_0)$. (This is not related to $r_0$, defined previously.) For $i \geq 0$, set
\begin{align}
\label{eq:receps}
p_{i+1} & := (C p_i)^2 \\
\label{eq:recr}
r_{i+1}^{(n)} & := r_{i}^{(n)} C^{-1} \exp (-C \xi \sqrt{ | \log (C p_i) |})
\end{align}
By induction we get, for $i \geq 0$,
\begin{equation}
\label{eq:LowerEst}
\mathbb{P} ( L_{3,3}^{(n)}(\psi) \leq r_i^{(n)} ) \leq p_i
\end{equation}
Indeed, the case $i = 0$ follows by definition and then notice that the RSW estimate \eqref{eq:RSWreUse} under the induction hypothesis implies that $\mathbb{P} ( L_{3,3}^{(n)}(\psi) \leq r_i^{(n)}) \leq p_i \Rightarrow \mathbb{P} ( L_{1,3}^{(n)}(\psi) \leq r_{i+1}^{(n)}) \leq C p_i$ which gives, using \eqref{decaysquare}, $\mathbb{P} ( L_{3,3}^{(n)}(\psi) \leq r_{i+1}^{(n)}) \leq \mathbb{P} ( L_{1,3}^{(n)}(\psi) \leq r_{i+1}^{(n)})^2 \leq  (C p_i )^2 = p_{i+1}$.

From \eqref{eq:receps} we get $p_i = (p_0 C^2)^{2^i} C^{-2}$ and from \eqref{eq:recr} we have the lower bound, for $i \geq 1$,
$$
r_i^{(n)} \geq \ell_{3,3}^{(n)}(\psi,p_0) C^{-i} e^{-C \xi \sum_{k = 0}^{i-1} \sqrt{| \log (C p_k)  |} } \geq  \ell_{3,3}^{(n)}(\psi,p_0) e^{-C i} e^{-C \xi \sqrt{|\log p_0 C^2 |} 2^{i/2}}.
$$
Our estimate \eqref{eq:LowerEst} then takes the form, for $i \geq 0$,
$$
\mathbb{P} \left( L_{3,3}^{(n)}(\psi) \leq \ell_{3,3}^{(n)}(\psi,p_0) e^{-C i} e^{-\xi C \sqrt{|\log p_0 C^2 |} 2^{i/2}} \right) \leq \left( p_0 C^2 \right)^{2^i} C^{-2}.
$$
This can be rewritten, taking $i = \lfloor 2 \log_2 s \rfloor$, as
$$
\mathbb{P}\left( L_{3,3}^{(n)}(\psi) \leq \ell_{3,3}^{(n)}(\psi,p_0) C^{-1} e^{-C \log s} e^{-\xi s} \right) \leq  e^{-c s^2}
$$
for $s > 2$ with absolute constants.
We obtain the statement of the proposition by using again the RSW estimates.\end{proof}

Using the comparison result between $\phi$ and $\psi$ (Proposition \ref{Prop:ComparisonFields}), we get the following corollary.
\begin{Cor}[Lower tail estimates for $\phi$]
For $p$ small enough, but fixed, for all $s > 0$ we have a constant $C<\infty$ so that
\begin{equation}
\label{eq:LowerTailsPhi}
\Pro \left( L_{1,3}^{(n)}(\phi) \leq e^{-s} {\ell_n}(\phi,p) \right) \leq C e^{- c s^2}.
\end{equation}
\end{Cor}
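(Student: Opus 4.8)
The plan is to transfer the lower tail estimate for $\psi$ (Proposition on \eqref{eq:LowerTailsPsi}) to the field $\phi$ using the pathwise comparison between the two fields. Recall from \eqref{DefX} that $X_{1,3} = \sup_{n\geq 0}\norme{\phi_{0,n}-\psi_{0,n}}_{R_{1,3}}$ and that, by the elementary inequality preceding \eqref{eq:RatiosPsiPhi}, we have $e^{-\xi X_{1,3}}L_{1,3}^{(n)}(\psi)\leq L_{1,3}^{(n)}(\phi)\leq e^{\xi X_{1,3}}L_{1,3}^{(n)}(\psi)$. By Proposition \ref{Prop:ComparisonFields}, $X_{1,3}$ has Gaussian tails, uniformly in $n$: $\Pro(X_{1,3}\geq t)\leq Ce^{-ct^2}$.

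First I would fix $p$ small (small enough that the Proposition on lower tails for $\psi$ applies), and relate $\ell_n(\phi,p)$ to $\ell_n(\psi,\cdot)$. Indeed, from \eqref{eq:RatiosPsiPhi} (or directly from the comparison inequalities that precede it) there is a constant $C_p$ with $\ell_n(\psi,p/2)\geq \sqrt{C_p}^{-1}\ell_n(\phi,p)$ uniformly in $n$; up to replacing $p$ by $p/2$ and absorbing constants, it therefore suffices to prove the bound with $\ell_n(\psi,p)$ in place of $\ell_n(\phi,p)$ and then translate back. (One also needs that $\ell_{1,3}^{(n)}$ and $\ell_n=\ell_{1,1}^{(n)}$ are comparable up to a factor $C e^{C\sqrt{|\log p|}}$, which is exactly the content of the RSW estimate \eqref{eq:LowQuantilesPhi}/\eqref{eq:LowQuantilesPsi}; this only changes the constants $C,c$ in the final statement.)

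The main step is then the union bound over the comparison event. On the event $\{X_{1,3}\leq s/2\}$ we have $L_{1,3}^{(n)}(\phi)\geq e^{-\xi X_{1,3}}L_{1,3}^{(n)}(\psi)\geq e^{-\xi s/2}L_{1,3}^{(n)}(\psi)$, so if additionally $L_{1,3}^{(n)}(\phi)\leq e^{-s}\ell_n(\psi,p)$ (up to the constant factors just discussed), then $L_{1,3}^{(n)}(\psi)\leq e^{-s/2}\cdot\text{(const)}\cdot\ell_n(\psi,p)$. Hence
\begin{align*}
\Pro\left(L_{1,3}^{(n)}(\phi)\leq e^{-s}\ell_n(\phi,p)\right)
&\leq \Pro\left(X_{1,3}\geq s/2\right) + \Pro\left(L_{1,3}^{(n)}(\psi)\leq C e^{-s/2}\ell_n(\psi,p)\right)\\
&\leq Ce^{-cs^2} + Ce^{-c s^2},
\end{align*}
where the first term is Proposition \ref{Prop:ComparisonFields} and the second is \eqref{eq:LowerTailsPsi} applied at the value $s/2 - O(1)$ (valid for $s$ large; for bounded $s$ the statement is trivial by adjusting constants). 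This proves \eqref{eq:LowerTailsPhi}.

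I do not expect any serious obstacle here: this is a soft transfer argument, and the only care needed is bookkeeping of the various multiplicative constants ($C_p$ from the quantile comparison, the $C e^{C\sqrt{|\log p|}}$ from RSW when passing between $L_{1,3}$ and $L_{1,1}$ quantiles, and the exponential prefactors $e^{-\xi X}$), all of which get absorbed into the final $C,c$ without affecting the Gaussian-type decay $e^{-cs^2}$. The one point to state carefully is that the estimate in \eqref{eq:LowerTailsPsi} is with respect to the $\psi$-quantile $\ell_n(\psi,p)$ while we want it with respect to the $\phi$-quantile $\ell_n(\phi,p)$, which is precisely why \eqref{eq:RatiosPsiPhi} is invoked.
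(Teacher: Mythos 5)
Your proof is correct and fills in precisely the transfer argument the paper invokes in one line ("Using the comparison result between $\phi$ and $\psi$..."): union bound over the Gaussian-tail event for $X_{1,3}$, the pathwise inequality $L^{(n)}_{1,3}(\psi)\leq e^{\xi X_{1,3}}L^{(n)}_{1,3}(\phi)$, the quantile comparison \eqref{eq:RatiosPsiPhi}, and then \eqref{eq:LowerTailsPsi}. The only bookkeeping slip is that the event $\{X_{1,3}\leq s/2\}$ yields $L^{(n)}_{1,3}(\psi)\leq e^{-(1-\xi/2)s}(\mathrm{const})\,\ell_n(\psi,p/2)$ rather than exactly $e^{-s/2}$, but since $\xi<1$ throughout the paper's regime this is a positive fraction of $s$ and the Gaussian decay is unaffected.
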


\paragraph{Upper tails.} The proof for the upper tails is similar to the one of Proposition 5.3 in \cite{DF18}. The main difference is that we have to switch between $\phi$ and $\psi$, so that we can use the independence properties of $\psi$ together with the scaling properties of $\phi$. Before stating the proposition, we refer the reader to \eqref{DefQuant} for the definition of $\Lambda_{n}(\phi,p)$. In constract with the lower tails estimates which are relative to ${\ell_n}(\phi,p)$, we do not know how to prove (at least a priori) the analogous result for the upper tails with ${\bar{\ell}_n}(\phi,p)$ only. However, we can prove it by replacing ${\bar{\ell}_n}(\phi,p)$ by $\Lambda_n(\phi,p) {\ell_n}(\phi,p)$ and this is the content of the following proposition.
\begin{Prop}[Upper tail estimates for $\phi$] For $p$ small enough, but fixed, we have a constant $C<\infty$ so that for all $n \geq 0$ and $s >2$,
\begin{equation}
\label{eq:UpperTailsPhi}
\Pro \left( L_{3,1}^{(n)}(\phi) \geq e^{s} \Lambda_{n}(\phi,p) {\ell_n} (\phi,p) \right) \leq C e^{c \frac{s^2}{\log s}}.
\end{equation}
\end{Prop}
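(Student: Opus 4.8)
The plan is to mimic the upper-tail argument of \cite{DF18} (Proposition 5.3), switching between $\phi$ and $\psi$ to exploit both the finite-range independence of $\psi$ at each scale and the exact scaling of $\phi$. The starting point is a one-scale bound: for the landscape rectangle $R_{3,1}$, an upper bound on $L_{3,1}^{(n)}(\phi)$ can be produced by concatenating left-right crossings of $O(1)$ sub-rectangles of aspect ratio $\geq 1$ (``hard'' crossings), each of which has, by the RSW comparison \eqref{eq:HighQuantilesPhi} together with \eqref{eq:RatiosPsiPhi}, a crossing length comparable to $\bar\ell_n(\phi,p')$ with probability $\geq 1-p'$ for a fixed small $p'$. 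So with probability close to $1$ one has $L_{3,1}^{(n)}(\phi)\leq C\,\bar\ell_n(\phi,p')\leq C\,\Lambda_n(\phi,p')\,\ell_n(\phi,p')$, which gives the statement for $s$ of order $1$; the work is to get the $e^{-cs^2/\log s}$ decay for large $s$.

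First I would set up the multiscale/recursive step. Fix a coarse scale $K$ (to be chosen as a function of $n$ and $s$). Write $\phi_{0,n}=\phi_{0,K}+\phi_{K,n}$, and on each dyadic block $P\in\mathcal P_K$ use the scaling property $\phi_{K,n}(2^{-K}\cdot)\overset{(d)}{=}\phi_{0,n-K}(\cdot)$ to see that, conditionally on the coarse field, the crossing of $R_{3,1}$ decomposes into crossings of $O(2^K)$ rescaled copies of a unit-order rectangle, each governed by an independent (after passing to $\psi$) copy of the field at scale $n-K$. The contribution of the coarse field $\phi_{0,K}$ to the length is controlled by $e^{\xi\,\mathrm{osc}}$ and $e^{\xi\max|\phi_{0,K}|}$, for which the oscillation and maximum bounds \eqref{eq:OscBoundTail}, \eqref{eq:OscBoundExp} and \eqref{eq:MaxBoundTail}, \eqref{eq:MaxBoundExp} give Gaussian-type tails; this costs a factor $e^{\xi O(K)}$ on a high-probability event and $2^{\xi O(\sqrt K)}$ in expectation. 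The fine part is a sum of $O(2^K)$ essentially independent terms, each distributed like $L^{(n-K)}$ of a fixed rectangle; using the RSW estimates again to pass to a product over hard crossings of sub-rectangles, and the square-root trick (Lemma \ref{Lem:FKG}), I would show that the probability that \emph{all} of these sub-crossings simultaneously exceed $e^{s}\,$(their typical value) decays like a product, i.e. like $(\text{const})^{2^{K}}$ — unless the excess is ``caused'' by the coarse field, which has already been accounted for.

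This yields a recursive inequality of the schematic form
\begin{equation}
\label{eq:RecursiveIneqSketch}
\Pro\!\left(L_{3,1}^{(n)}(\phi)\geq e^{s}\Lambda_n(\phi,p)\ell_n(\phi,p)\right)\leq C\,e^{-c\,2^{K}}+\Pro\!\left(L_{3,1}^{(n-K)}(\phi)\geq e^{s-\xi O(K)-C\sqrt{\log s}}\Lambda_{n-K}(\phi,p)\ell_{n-K}(\phi,p)\right),
\end{equation}
valid as long as $s-\xi O(K)-C\sqrt{\log s}$ stays positive. Iterating this, choosing at each step $K$ of order $s/\log s$ (so that $2^{K}\asymp$ a superpolynomial-in-$s$ quantity and after $\asymp \log s$ steps one has driven the threshold down to $O(1)$ where the one-scale base estimate applies), the accumulated error is $\asymp(\log s)\cdot e^{-c\,2^{s/\log s}}$ plus the base-case $O(1)$-threshold probability. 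A slightly more careful bookkeeping — choosing $K\asymp \log(s/\log s)$ per step but running $\asymp s/\log s$ steps, or balancing so that the dominant term is $\exp(-c s^2/\log s)$ — gives exactly the claimed $Ce^{-cs^2/\log s}$; the $\log s$ in the exponent is precisely the loss from the $\sqrt{\log s}$-type corrections in the RSW estimates \eqref{eq:HighQuantilesPhi} compounding over the scales, and from the $O(\sqrt K)$ in the maximum/oscillation moment bounds.

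The main obstacle I anticipate is the bookkeeping in the recursion: one must track simultaneously (i) the quantile ratios $\Lambda_n$ versus $\Lambda_{n-K}$ (using monotonicity of $\ell_k,\bar\ell_k$ built into the definition \eqref{DefQuant} so that dropping from scale $n$ to $n-K$ does not increase $\Lambda$), (ii) the geometric comparison between $\ell_n(\phi,p)\ell_n(\phi,p)^{-1}$ across scales via scaling of $\phi$ and RSW, and (iii) the requirement that the threshold $s$ remains $\geq 2$ throughout so that the base case and the RSW inputs (which need $\eps<1/2$) apply. A secondary technical point is that the ``essential independence'' of the fine block fields is only available for $\psi$, not $\phi$ — so each pass through the recursion must compare $\phi$ to $\psi$ via Proposition \ref{Prop:ComparisonFields}, paying a further $e^{\xi X_{a,b}}$ with Gaussian tails, and it must be checked that these comparison errors, summed over the $O(s/\log s)$ scales, remain subdominant to $e^{-cs^2/\log s}$ (which they do, since each contributes only a $\sqrt{\log(\cdot)}$-type correction and has Gaussian tails). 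Once these are organized, the proof is a fairly mechanical iteration.
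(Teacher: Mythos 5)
Your proposal has the right ingredients (decoupling coarse from fine scales, max/oscillation bounds for the coarse part, the $\phi\leftrightarrow\psi$ comparison, finite-range percolation for $\psi$, RSW to compare quantiles across scales), but it organizes them as a multi-step recursion which I do not believe gives the claimed bound and which is, in any case, not what the paper does. The paper's proof is a \emph{one-shot} decoupling: (i) a percolation estimate (Step 1) proves that for the finite-range field $\psi$ the crossing length of a spatially large $3k\times k$ rectangle at a fixed field scale concentrates exponentially in $k$, namely $\Pro\bigl(L^{(n)}_{3k,k}(\psi)\geq Ck^2\,\bar\ell_n(\psi,p)\bigr)\leq Ce^{-ck}$, then this is transported to $\phi$ via Proposition~\ref{Prop:ComparisonFields}; (ii) a \emph{single} decoupling at field scale $m$ writes $L^{(n)}_{3,1}(\phi)\leq e^{\xi\max_{R_{3,1}}\phi_{0,m}}\,L^{(m,n)}_{3,1}(\phi)$ and uses the exact scaling $L^{(m,n)}_{3,1}(\phi)\overset{(d)}{=}2^{m}L^{(n-m)}_{3\cdot 2^m,2^m}(\phi)$, so that the fine-scale problem is a \emph{geometrically larger} rectangle ($k=2^m$) at the lower field scale, which is exactly what (i) controls; (iii) an a priori quantile bound $\ell_n(\phi,p)\geq 2^{-2\xi k}\ell_{n-k}(\phi,p)e^{-C\sqrt k}$ pulls $\bar\ell_{n-m}$ back to $\Lambda_n\ell_n$; (iv) the balance is $2^m=s^2$, so that the max-bound failure probability $\Pro(\max_{R_{3,1}}\phi_{0,m}\geq s\sqrt m)\leq Ce^{-cs^2}$ matches the percolation failure $e^{-c2^m}$, and the extra factor $e^{\xi s\sqrt m}=e^{O(s\sqrt{\log s})}$ in the threshold is the source of the $\log s$ in the exponent after the substitution $t\asymp s\sqrt{\log s}$; (v) a separate short argument handles $s\geq 2^{n/2}$.

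Two concrete gaps in your version. First, your recursive inequality \eqref{eq:RecursiveIneqSketch} lists only the $e^{-c2^K}$ term as the error, but each decoupling step also incurs a coarse-field max-bound failure probability, and you have to sum these over your $O(\log s)$ (or $O(s/\log s)$) iterations. If you keep that probability per step at a constant (by thresholding the coarse max at its typical value $\asymp K$, which is what ``costs a factor $e^{\xi O(K)}$ on a high-probability event'' suggests), the accumulated error is $O(1)$; if you push it down to $e^{-cs^2/\log s}$ per step you must raise the coarse-max threshold to $\asymp(s/\sqrt{\log s})\sqrt K$, and the resulting multiplicative loss $e^{\xi(s/\sqrt{\log s})\sqrt K}$ per step blows up the running threshold. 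The paper avoids this dilemma precisely by doing only one decoupling at a carefully tuned depth $m\asymp\log s$, where the two error sources can be balanced simultaneously. Second, your scheme (whichever choice of $K$) needs the decoupling depth to stay $\leq n$, i.e. $s\leq 2^{n/2}$; the case $s>2^{n/2}$ needs a separate elementary bound (the paper's Step~4: a crude lower bound on $\ell_n$ plus a Chebyshev/moment bound for a straight-line path), which your proposal does not address. I'd recommend abandoning the iteration: it is both weaker and more complicated than the single decoupling with the scaling identity $L^{(m,n)}_{3,1}\overset{(d)}{=}2^m L^{(n-m)}_{3\cdot 2^m,2^m}$, which is the key trick you are missing.
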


\begin{proof} 

The proof uses percolation and scaling arguments. A percolation argument is used to build a crossing of a larger rectangle from smaller annular circuits, and then a scaling argument is used to relate quantiles of these annular crossings to crossing quantiles of the larger rectangle.

\begin{figure}[ht]
\centering
\includegraphics[scale=1]{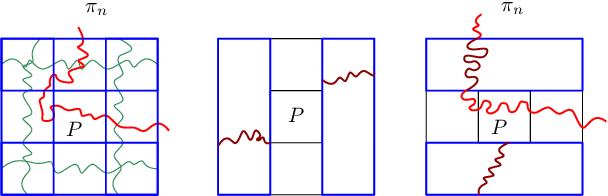}
\caption{Four blue rectangles are surrounding the square $P$. Left-right geodesics associated to the long and short rectangles surrounding $P$ are drawn in green and brown respectively. Any geodesic $\pi_n$, here in red, which intersects $P$ has to cross the green circuit and to induce a short crossing of one of the four rectangles.\label{fig:surroundingrectangles}}
\end{figure}

\textit{Step 1:} Percolation argument.  To each unit square $P$ of $\Z^2$, we associate the four crossings of long rectangles of size $(3,1)$  surrounding $P$, each comprising three squares on one side of the eight-square annulus surrounding $P$, as illustrated in Figure~\ref{fig:surroundingrectangles}. We define $S^{(n)}(P,\psi)$ to be the sum of the four crossing lengths, and declare the site $P$ to be open when the event $\{ S^{(n)}(\psi,P) \leq 4 \bar{\ell}_{3,1}^{(n)}(\psi,p) \}$ occurs. This occurs with probability at least $1 - \eps(p)$, where $\eps(p)$ goes to zero as $p$ goes to zero (recall that $\Pro(L_{3,1}^{(n)}(\psi) \leq \bar{\ell}_{3,1}^{(n)}(p)) = 1 - p$). Using a highly supercritical finite-range site percolation estimate to obtain exponential decay of the probability of a left--right crossing (which is standard technique in classical percolation theory \cite{DC}; see also for example the proof of Proposition~4.2 in \cite{DD18}) together with the Russo-Seymour-Welsh estimates (to come back to ${\bar{\ell}_n}(\psi,p)$), we have $$\Pro \left(L_{3 k,k}^{(n)}(\psi) \geq Ck^2 {\bar{\ell}_n}(\psi,p) \right) \leq Ce^{-c k}.$$
Therefore, using this bound together with Proposition \ref{Prop:ComparisonFields} to bound $X_{3k,k}$ (recalling the definition \eqref{DefX}),
\begin{align*}
\Pro \left(L_{3k,k}^{(n)}(\phi) \geq e^{\xi C \sqrt{k}} C_p C k^2 {\bar{\ell}_n}(\phi,p/2) \right) & \leq \Pro \left( e^{\xi X_{3k,k}} L_{3k,k}^{(n)}(\psi) \geq e^{\xi C \sqrt{k}} C_p C k^2 {\bar{\ell}_n}(\phi,p/2)  \right) \\
& \leq \Pro \left( X_{3k,k} \geq C \sqrt{k} \right) + \Pro \left( L_{3k,k}^{(n)}(\psi) \geq C_p C k^2 {\bar{\ell}_n}(\phi,p/2)  \right) \\
& \leq C e^{-ck} + \Pro \left( L_{3k,k}^{(n)}(\psi) \geq C k^2 {\bar{\ell}_n}(\psi,p)  \right) \leq C e^{-ck}.
\end{align*}
Note that we used the bound ${\bar{\ell}_n}(\psi,p) \leq C_p {\bar{\ell}_n}(\phi,p/2)$ from \eqref{eq:RatiosPsiPhi} in the third inequality; here $C_p$ is defined as in  \eqref{eq:RatiosPsiPhi}.

\textit{Step 2:} Decoupling and scaling. In this step, we give a rough bound of the coarse field $\phi_{0,m}$, to obtain spatial independence of the remaining field between blocks of size $2^{-m}$. When an event occurs on one block with high enough probability, the  percolation argument of Step 1 then provides, with very high probability, a left-right path of such events occuring simultaneously. Since $L_{3,1}^{(n)}(\phi) \leq e^{\xi \max_{R_{3,1}} \phi_{0,m}} L_{3,1}^{(m,n)}(\phi)$, the scaling property of the field $\phi$, i.e. $L_{3,1}^{(m,n)}(\phi) \overset{(d)}{=} 2^{-m} L_{3 \cdot 2^m,2^m}^{(n-m)}(\phi)$, 
gives 
\begin{align*}
\Pro& \left( L_{3,1}^{(n)}(\phi) \geq e^{\xi s \sqrt{m}} e^{c \sqrt{2^m}} \bar{\ell}_{n-m}(\phi,p) \right)  \\&\leq \Pro \left( \max_{R_{3,1}} \phi_{0,m} \geq Cm + s \sqrt{m} \right) + \Pro \left(2^{-m} L_{3 \cdot 2^m,2^m}^{(n-m)}(\phi) \geq e^{c \sqrt{2^m}} \bar{\ell}_{n-m}(\phi,p)  \right) 
\leq Ce^{-c s^2} + C e^{-c 2^m},
\end{align*}
where the first term of the second expression is bounded by taking $a = C + s m^{-1/2}$ in Proposition \ref{Prop:MaxBound} and the second bound follows from the result obtained in Step 1 with $k = 2^{m}$, taking a slightly larger $c$ in $\exp(c \sqrt{2^m})$ to absorb the factor $e^{Cm}$.

\textit{Step 3:} We derive an a priori bound ${\ell_n}(\phi,p) \geq 2^{-2\xi k} \ell_{n-k}(\phi,p) e^{-C \sqrt{k}}$. (Note that the argument below will be optimized in \eqref{eq:WeakSuperMul}.) For each dyadic block of size $2^{-k}$ visited by $\pi_n(\phi)$, one of the four rectangles of size $2^{-k}(1,3)$ around $P$ has to be crossed by $\pi_n(\phi)$. Therefore, since $\pi_n(\phi)$ has to visit at least $2^k$ dyadic blocks of size $2^{-k}$,  we have
$$
L_{1,1}^{(n)}(\phi) \geq 2^{k} e^{\xi \inf_{[0,1]^2} \phi_{0,k}} \min_{P \in \mc{P}_k, P \cap \pi_n(\phi) \neq \emptyset} \min_{1 \leq i \leq 4} L^{(k,n)}(R_i^S(P),\phi),
$$
where $(R_i^S(P))_{1 \leq i \leq 4}$ denote the four long rectangles of size $2^{-k}(1,3)$ surrounding $P$. Using the supremum tail estimate \eqref{eq:MaxBoundTail} and the left tail estimates \eqref{eq:LowerTailsPhi}, we get ${\ell_n}(\phi,p) \geq 2^{-2\xi k} \ell_{n-k}(\phi,p) e^{-C \sqrt{k}}$. Indeed,
\begin{align*}
& \Pro \left(  e^{\xi \inf_{[0,1]^2} \phi_{0,k}} \min_{P \in \mc{P}_k, P \cap \pi_n(\phi) \neq \emptyset} \min_{1 \leq i \leq 4} 2^{k} L^{(k,n)}(R_i^S(P),\phi)  \leq  2^{-2\xi k} \ell_{n-k}(\phi,p) e^{-C \sqrt{k}} \right) \\
&\leq  \Pro \left( \inf_{[0,1]^2} \phi_{0,k} \leq - k \log 4 - C \sqrt{k}  \right) + \Pro \left( \min_{P \in \mc{P}_k, P \cap \pi_n(\phi) \neq \emptyset} \min_{1 \leq i \leq 4} 2^{k} L^{(k,n)}(R_i^S(P),\phi)  \leq  \ell_{n-k}(\phi,p) e^{-C \sqrt{k}} \right) 
\end{align*}
and each term is less than $p/2$ if $C$ is large enough, depending on $p$.
Therefore, we have \[\bar{\ell}_{n-m}(\phi,p) \leq \Lambda_{n-m}(\phi,p) {\ell_{n-m}}(\phi,p) \leq  2^{2\xi m} e^{C \sqrt{m}} \Lambda_{n-m}(\phi,p) {\ell_n}(\phi,p).\] Now, by coming back to the partial result obtained in Step 2 and by taking $s^2 = 2^m$ for $s \in [1,2^{n/2}]$, we get
$$
\Pro \left( L_{3,1}^{(n)}(\phi) \geq e^{c s \sqrt{\log s}} e^{cs} \Lambda_n(\phi,p) {\ell_n}(\phi,p) \right)  \leq e^{- c s^2}.
$$

\textit{Step 4:} Now we consider large tails, so we assume $s \geq 2^{\frac{n}{2}}$. By a direct comparison with the supremum, we have ${\ell_n}(\phi,p) \geq 2^{-\xi (2 n + C \sqrt{n})}$ (later on we will use a more precise estimate from \cite{DG18}, see \eqref{eq:DGlowerBound}). Moreover, bounding from above the left-right distance by taking a straight path from left to right and then using a moment method analogous to the one in \eqref{eq:MomentEst}, we get $\Pro \left( L_{1,1}^{(n)}(\phi) \geq e^{\xi s} \right) \leq e^{-\frac{s^2}{2 (n+1) \log 2}}$. Altogether,
\[
\Pro \left( L_{1,1}^{(n)}(\phi) \geq {\ell_n}(\phi,p) \Lambda_n(\phi,p) e^{\xi s} \right) \leq \Pro \left( L_{1,1}^{(n)}(\phi) \geq {\ell_n}(\phi,p) e^{\xi s} \right) \leq e^{-\frac{(s-n \log 4 - C \sqrt{n})^2}{2(n+1)\log 2}} \leq e^{C s} e^{-c \frac{s^2}{\log s}},
\]
where we used $\Lambda_n(\phi,p) \geq 1$ in the first inequality and the bound ${\ell_n}(\phi,p) \geq 2^{-\xi (2 n + C \sqrt{n})}$ together with the tail estimate $\Pro \left( L_{1,1}^{(n)}(\phi) \geq e^{\xi s} \right) \leq e^{-\frac{s^2}{2 (n+1) \log 2}}$ in the second one. The last inequality follow since $s \geq 2^{\frac{n}{2}}$.

Combining the tail estimate of Step 3, valid for $s \in [1,2^{n/2}]$, and the one of Step 4, valid for $s \geq 2^{n/2}$, completes the proof.
\end{proof}

Using again the comparison between $\phi$ and $\psi$ given in Proposition \ref{Prop:ComparisonFields}, we get the following corollary.
\begin{Cor}[Upper tail estimates for $\psi$] For $p$ small enough, but fixed, we have, for all $n \geq 0$ and $s >2$,
\begin{equation}
\label{eq:UpperTailsPsi}
\Pro \left( L_{3,1}^{(n)}(\psi) \geq e^{s} \Lambda_{n}(\psi,p) {\ell_n}(\psi,p)  \right) \leq C e^{c \frac{s^2}{\log s}}.
\end{equation}
\end{Cor}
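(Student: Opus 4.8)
The plan is to obtain this purely from the upper‑tail estimate \eqref{eq:UpperTailsPhi} for $\phi$ together with the quantile comparisons that follow from Proposition~\ref{Prop:ComparisonFields}; no fresh probabilistic estimate is required, and the only point to check is that the normalization $\Lambda_n(\psi,p)\,\ell_n(\psi,p)$ appearing on the right‑hand side is comparable, up to $n$‑independent constants, to the corresponding $\phi$‑normalization at a slightly shifted quantile level.

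\textbf{Comparison of the normalizations.} First I would show that for $p$ small there is a constant $c_p>0$, uniform in $n$, such that
\[
\Lambda_n(\psi,p)\,\ell_n(\psi,p)\ \geq\ c_p\,\Lambda_n(\phi,2p)\,\ell_n(\phi,2p).
\]
For the factor $\ell_n$ this is, after relabelling $p\mapsto 2p$, precisely the first inequality in \eqref{eq:RatiosPsiPhi}, namely $\ell_n(\psi,p)\geq \sqrt{C_{2p}}^{-1}\ell_n(\phi,2p)$. For the factor $\Lambda_n$ I would instead use the two‑sided quantile inequalities displayed just before \eqref{eq:RatiosPsiPhi}, applied scale by scale with the choice $\eps=p$: they give, for every $k\le n$, $\bar\ell_k(\psi,p)\geq e^{-\xi C\sqrt{|\log(p/C)|}}\,\bar\ell_k(\phi,2p)$ and $\ell_k(\psi,p)\leq e^{\xi C\sqrt{|\log(p/C)|}}\,\ell_k(\phi,2p)$, hence $\bar\ell_k(\psi,p)/\ell_k(\psi,p)\geq e^{-2\xi C\sqrt{|\log(p/C)|}}\,\bar\ell_k(\phi,2p)/\ell_k(\phi,2p)$; taking the maximum over $k\le n$ gives $\Lambda_n(\psi,p)\geq e^{-2\xi C\sqrt{|\log(p/C)|}}\,\Lambda_n(\phi,2p)$. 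Multiplying the two bounds yields the displayed inequality with $c_p=\sqrt{C_{2p}}^{-1}e^{-2\xi C\sqrt{|\log(p/C)|}}$.

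\textbf{Transfer of the tail bound.} Shrink $p$ so that $2p$ lies in the range for which \eqref{eq:UpperTailsPhi} is valid, and set $s_0:=\max\bigl(4,\,2|\log c_p|\bigr)$. For $s\geq s_0$ the comparison above gives $e^{s}\Lambda_n(\psi,p)\ell_n(\psi,p)\geq c_p\,e^{s}\,\Lambda_n(\phi,2p)\ell_n(\phi,2p)\geq e^{s/2}\Lambda_n(\phi,2p)\ell_n(\phi,2p)$, so that by \eqref{eq:UpperTailsPhi} at quantile level $2p$ (and at $s/2>2$),
\[
\Pro\!\left(L_{3,1}^{(n)}(\phi)\geq e^{s}\,\Lambda_n(\psi,p)\,\ell_n(\psi,p)\right)\ \leq\ \Pro\!\left(L_{3,1}^{(n)}(\phi)\geq e^{s/2}\,\Lambda_n(\phi,2p)\,\ell_n(\phi,2p)\right)\ \leq\ C\,e^{-c(s/2)^2/\log(s/2)},
\]
and the right‑hand side is at most $C'e^{-c's^2/\log s}$ after adjusting constants. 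For the bounded range $2<s<s_0$ the probability is trivially $\leq 1\leq Ce^{-cs^2/\log s}$ once $C$ is enlarged, which proves the corollary. (If the intended left‑hand side were $L_{3,1}^{(n)}(\psi)$ rather than $L_{3,1}^{(n)}(\phi)$, the same argument applies after first bounding $L_{3,1}^{(n)}(\psi)\leq e^{\xi X_{3,1}}L_{3,1}^{(n)}(\phi)$ and splitting off the event $\{X_{3,1}\geq s/(4\xi)\}$, whose probability is $\leq Ce^{-cs^2}$ by Proposition~\ref{Prop:ComparisonFields}, which is absorbed into the bound.) The only mild obstacle is the bookkeeping of the quantile shift $p\leftrightarrow 2p$ and of the $e^{O(\sqrt{|\log p|})}$ prefactors produced by Proposition~\ref{Prop:ComparisonFields}; since these depend neither on $n$ nor on $s$, they are harmlessly absorbed into the stretched‑exponential tail without changing its form.
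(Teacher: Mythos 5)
Your proof is correct and is the natural elaboration of the paper's one-line justification (``Using again the comparison result between $\phi$ and $\psi$''): you supply the needed lower bound $\Lambda_n(\psi,p)\,\ell_n(\psi,p)\gtrsim\Lambda_n(\phi,2p)\,\ell_n(\phi,2p)$ --- going back to the two-sided quantile inequalities for the $\Lambda$-factor, since \eqref{eq:RatiosPsiPhi} alone gives that comparison in the wrong direction --- and then transfer \eqref{eq:UpperTailsPhi} with routine bookkeeping of the quantile shift and the small-$s$ range. You also correctly flag that the corollary as printed has $L_{3,1}^{(n)}(\phi)$ on the left despite the ``estimates for $\psi$'' header, and your handling of the $\psi$-reading via $X_{3,1}$ and Proposition~\ref{Prop:ComparisonFields} covers that case too.
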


\section{Concentration}

\subsection{Concentration of the log of the left-right crossing length}

\paragraph{Condition (T).} Denote by $\pi_n(\psi)$ the left-right geodesic of the unit square associated to the field $\psi_{0,n}$. If there are multiple such geodesics, let $\pi_n(\psi)$ be chosen among them in some measurable way, for example by taking the uppermost geodesic. By $\pi_n^K(\psi)$ its $K$-coarse graining which we define as
\begin{equation}
\label{DefCoarseGraining}
\pi_n^K(\psi) := \lbrace P \in \mc{P}_K ~ : ~ P \cap \pi_n(\psi) \neq \varnothing \rbrace,
\end{equation}
recalling the definition \eqref{Pndef} of $\mc{P}_K$. Let $\psi_{0,n}(P)$ denote the value of the field $\psi_{0,n}$ taken at the center of a block $P$. We introduce the following condition: there exist constants $\alpha > 1$, $c > 0$ so that for $K$ large we have
$$
\label{eq:AssA}
\sup_{n \geq K} \E \left( \left( \frac{\sum_{P \in \pi_n^K(\psi)} e^{2\xi \psi_{0,K}(P)}}{\left( \sum_{P \in \pi_n^K(\psi)} e^{\xi \psi_{0,K}(P)}  \right)^2} \right)^{\alpha} \right)^{1/\alpha} \leq e^{-c K}.  \qquad \text{(Condition (T))}
$$
The importance of Condition (T) comes from the following theorem. 
\begin{thm}
\label{thm:AssTthm}
If $\xi$ is such that Condition (T) above is satisfied, then $(\log L_{1,1}^{(n)}(\phi) - \log \lambda_n(\phi))_{n \geq 0}$ is tight, where $\lambda_n(\phi)$ denotes the median of $L_{1,1}^{(n)}$.
\end{thm}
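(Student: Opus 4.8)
The plan is to prove the uniform bound $\sup_{n\ge 0}\Var(\log L_{1,1}^{(n)}(\psi))<\infty$ and then transfer it to $\phi$. For the transfer, note that $\bigl|\log L_{1,1}^{(n)}(\phi)-\log L_{1,1}^{(n)}(\psi)\bigr|\le\xi X_{1,1}$ by \eqref{DefX}, that $X_{1,1}$ has Gaussian tails uniformly in $n$ by Proposition~\ref{Prop:ComparisonFields}, and that $\log\lambda_n(\phi)-\log\lambda_n(\psi)=O(1)$ uniformly in $n$ (cf.\ \eqref{eq:RatiosPsiPhi}), where $\lambda_n(\psi)$ denotes the median of $L_{1,1}^{(n)}(\psi)$; hence it is enough that $\bigl(\log L_{1,1}^{(n)}(\psi)-\log\lambda_n(\psi)\bigr)_n$ be tight. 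Since $\log\lambda_n(\psi)$ is a median of $\log L_{1,1}^{(n)}(\psi)$, the one-sided Chebyshev (Cantelli) inequality gives $\bigl|\E\log L_{1,1}^{(n)}(\psi)-\log\lambda_n(\psi)\bigr|\le\sqrt{\Var(\log L_{1,1}^{(n)}(\psi))}$, so a uniform variance bound yields tightness after recentering at the mean.

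I would then set up a recursion in steps of $K$, for $K$ a large integer to be fixed via Condition~(T). Put $v_n:=\Var(\log L_{1,1}^{(n)}(\psi))$. For $0\le n\le K$ one has $v_n<\infty$: bounding $L_{1,1}^{(n)}(\psi)$ from above by a straight left--right crossing ($\le e^{\xi\|\psi_{0,n}\|_{[0,1]^2}}$) and from below by $e^{-\xi\|\psi_{0,n}\|_{[0,1]^2}}$ gives $\bigl|\log L_{1,1}^{(n)}(\psi)\bigr|\le\xi\|\psi_{0,n}\|_{[0,1]^2}$, which has all moments. For $n>K$, use the block decomposition \eqref{eq:BlockDecompoPsi}, $\psi_{0,n}=\psi_{0,K}+\sum_{P\in\mc{P}_K}\psi_{K,n,P}$, in which $\psi_{0,K}$ is independent of the block fields, and condition on $\psi_{0,K}$:
\begin{equation}
\label{eq:RecursiveIneq}
v_n=\Var\bigl(\E[\log L_{1,1}^{(n)}(\psi)\mid\psi_{0,K}]\bigr)+\E\bigl[\Var(\log L_{1,1}^{(n)}(\psi)\mid\psi_{0,K})\bigr].
\end{equation}
The geometric heart is the following comparison, valid up to multiplicative errors whose logarithm has Gaussian tails uniformly in $n$: replacing $\psi_{0,K}$ by its value $\psi_{0,K}(P)$ on each $P\in\mc{P}_K$ (at a cost controlled by the oscillation bounds of Section~\ref{Sec:Convolution}), rescaling $\psi_{K,n,P}$ to a scale-$(n-K)$ field, and using the RSW estimates (Corollary~\ref{Cor:RSWpsi}) and the tail estimates of Section~\ref{Sec:TailEstimates} to compare crossings of subrectangles of blocks with $\lambda_{n-K}(\psi)$ and to restrict to a high-probability event on which all block crossings involved are of that order,
\[
L_{1,1}^{(n)}(\psi)\;\asymp\;2^{-K}\lambda_{n-K}(\psi)\sum_{P\in\pi_n^K(\psi)}e^{\xi\psi_{0,K}(P)}\,(1+\varepsilon_P),
\]
where $\pi_n^K(\psi)$ is the coarse-grained geodesic \eqref{DefCoarseGraining} and, up to a bounded-degree dependence (degree $\le CK^{2\eps_0}$, from the finite range of $\psi$), the $\varepsilon_P$ are independent copies of $L_{1,1}^{(n-K)}(\psi)/\lambda_{n-K}(\psi)-1$.

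For the conditional variance in \eqref{eq:RecursiveIneq} I would apply the Efron--Stein inequality in the independent block fields $\{\psi_{K,n,P}\}_P$. Resampling $\psi_{K,n,P}$ changes $\log L_{1,1}^{(n)}(\psi)$ by at most a constant times the fraction of the geodesic length spent within a $CK^{2\eps_0}$-neighbourhood of $P$, times the increment of $\log(1+\varepsilon_P)$ (whose square has expectation of order $v_{n-K}$); summing squares, the relevant length-fractions squared sum on the good event, up to a polynomial factor in $K$, to $\frac{\sum_{P\in\pi_n^K(\psi)}e^{2\xi\psi_{0,K}(P)}}{\bigl(\sum_{P\in\pi_n^K(\psi)}e^{\xi\psi_{0,K}(P)}\bigr)^2}$. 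Taking expectations and applying H\"older with the exponent $\alpha>1$ of Condition~(T) (the Gaussian moments of $\psi_{K,n,P}$ and the tail estimates for $L^{(n-K)}$ dispose of the complementary event), we obtain $\E[\Var(\log L_{1,1}^{(n)}(\psi)\mid\psi_{0,K})]\le CK^{2\eps_0}e^{-cK}v_{n-K}+C_1$ with $C_1$ bounded uniformly in $n$. For the first term of \eqref{eq:RecursiveIneq}, the comparison above gives $\E[\log L_{1,1}^{(n)}(\psi)\mid\psi_{0,K}]=\log\lambda_{n-K}(\psi)+\log L_{1,1}^{(K)}(\psi)+(\text{a term of uniformly bounded variance})$, because $2^{-K}\sum_{P\in\pi_n^K(\psi)}e^{\xi\psi_{0,K}(P)}$ is comparable to the left--right crossing length of $[0,1]^2$ for the coarse metric $e^{\xi\psi_{0,K}}ds$ (upper bound by testing its geodesic against $L_{1,1}^{(n)}(\psi)$; lower bound since $\pi_n^K(\psi)$ is itself a coarse crossing); as $\log L_{1,1}^{(K)}(\psi)$ is $\psi_{0,K}$-measurable, this term has variance $\le 2v_K+C_1$. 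Plugging both bounds into \eqref{eq:RecursiveIneq} yields, for $n>K$,
\[
v_n\;\le\;CK^{2\eps_0}e^{-cK}\,v_{n-K}+2v_K+C_1 .
\]
Fixing $K$ large enough that $CK^{2\eps_0}e^{-cK}\le\tfrac12$ — possible since Condition~(T) holds for all large $K$ — and iterating over residues modulo $K$ (the finitely many base values $v_0,\dots,v_K$ being finite), we get $\sup_n v_n<\infty$.

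The step I expect to be the main obstacle is the geometric comparison above and its use inside the Efron--Stein argument. One must control the error between $L_{1,1}^{(n)}(\psi)$ and the block first-passage sum uniformly in $n$: the boundary effects at the blocks where the geodesic enters and leaves, the dependence of the random set $\pi_n^K(\psi)$ on all scales at once, and the displacement of the geodesic when a single block field is resampled — which forces one-sided comparisons between the old and new geodesics and requires absorbing the low-probability events on which some block crossing is far from $\lambda_{n-K}(\psi)$. It is precisely on those events that the quantitative RSW and tail estimates of Sections~\ref{Sec:AppCIGauss}--\ref{Sec:TailEstimates}, and the room provided by the exponent $\alpha>1$ in Condition~(T), are essential; this part is adapted, with the changes required by the heat-kernel mollification, from \cite{DF18,DD18}.
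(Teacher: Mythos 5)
Your overall architecture is the right one (Efron--Stein with the block decomposition of $\psi_{0,n}$, Condition~(T) supplying the $e^{-cK}$ contraction, transfer back to $\phi$ via Proposition~\ref{Prop:ComparisonFields} and \eqref{eq:RatiosPsiPhi}), but the claimed \emph{linear} recursion $v_n\le CK^{2\eps_0}e^{-cK}\,v_{n-K}+2v_K+C_1$ does not follow from the Efron--Stein step. The difficulty is in your phrase ``times the increment of $\log(1+\varepsilon_P)$ whose square has expectation of order $v_{n-K}$.'' The $(\cdot)_+$-increment on resampling a single block $\psi_{K,n,P}$ is bounded (cf.\ \eqref{eq:Ineq}) by the crossing length of rectangles surrounding $P^K$ divided by $L_{1,1}^{(n)}(\psi)$; to bound the sum over all $P\in\mc P_K$, the paper must bound $L_{1,1}^{(n)}(\psi)$ from below \emph{simultaneously} for all coarse blocks, and the resulting factor is a ratio of the form $\bigl(\max_{2\times 4^K}L^{(K,n)}/\min_{2\times 4^K}L^{(K,n)}\bigr)^2$ over roughly $4^K$ i.i.d.\ copies. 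By the tail estimates of Section~\ref{Sec:TailEstimates}, that ratio's moments are controlled by the squared quantile ratio $\Lambda_{n-K}^2$ (see \eqref{eq:FirstIneq}), \emph{not} by the variance $v_{n-K}$.

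The upshot is that the honest recursion is
$v_n\le C_1K+e^{-C_2K}\Lambda_{n-K}^2(\psi,p/2)$,
and the only a priori control of $\Lambda_{n-K}$ by $v_{n-K}$ is the one-sided Lemma~\ref{Lem:Quantiles/Var}, i.e.\ $\Lambda_{n-K}\le e^{C_p\sqrt{v_{n-K}}}$, which turns the recursion into the genuinely nonlinear $v_n\le C_1K+e^{-C_2K}e^{2C_p\sqrt{v_{n-K}}}$. This is not automatically a contraction for large $v_{n-K}$: it only closes once one has an a priori bound at the initialization scale (Lemma~\ref{Lem:Apriori} gives $\Lambda_K\le e^{\tilde C_p\sqrt K}$, from the Gaussian Poincar\'e bound $v_k\le\xi^2(k+1)\log 2$ of Lemma~\ref{Lem:VarApriori}), and then fixes $K$ once and for all so that $e^{-C_2K}\Lambda_{\mathrm{Rec}}^2\le C_1K$; the induction must therefore be run on the quantile ratios $\Lambda_n$, not on $v_n$, exactly as in the paper's Step~5. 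Your proposal is missing both the a priori bound (the initialization) and the identification of $\Lambda_n$ as the correct quantity to iterate, and the claim that the Efron--Stein bound reduces to a linear term in $v_{n-K}$ is the specific point where the argument breaks. Finally, your Step~5 conclusion quietly needs that the \emph{centered} tightness of $\log L_{1,1}^{(n)}$ follows from bounded variance; this is fine, but in the paper the conclusion is instead drawn from $\Lambda_\infty<\infty$ together with the tail estimates \eqref{eq:LowerTailsPhi} and \eqref{eq:UpperTailsPsi}, which you should invoke explicitly rather than via Cantelli.
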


It is not expected that the weight is approximately constant over the crossing (since there may be some large level lines of the field that the crossing must cross). Condition~(T), however, roughly requires that the length of the crossing is supported by a number of coarse blocks that grows at least like some small but positive power of the total number of coarse blocks. Note that the fraction in Condition~(T) is the $\ell^2$ norm of the vector of crossing weights on each block divided by the square of the $\ell^1$ norm of the same, and thus controlling it amounts to an anticoncentration condition for this vector.

The core of this section is the proof of Theorem \ref{thm:AssTthm}. Before proving it, let us already jump to the important following proposition. Here we use the assumption that $\xi\in(0,2/d_2)$, although the formulation of Condition (T) is designed so that it could also hold for larger $\xi$.

\begin{Prop}
\label{Prop:CondSatisfied}
If $ \gamma \in (0,2)$, then $\xi := \frac{\gamma}{d_{\gamma}}$ satisfies Condition (T).  
\end{Prop}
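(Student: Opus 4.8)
The plan is to bound the quantity inside Condition (T) by splitting the coarse blocks $P \in \pi_n^K(\psi)$ according to the value of the field $\psi_{0,K}(P)$, and to control the denominator $\sum_{P\in\pi_n^K(\psi)} e^{\xi\psi_{0,K}(P)}$ from below using the a priori lower bound on $\ell_n$ together with the sharp exponent $c_\xi = \xi Q - 1$ coming from \cite{DG18}. Concretely, write $N = |\pi_n^K(\psi)| \geq 2^K$ for the number of coarse blocks crossed. The numerator is at most $N \max_P e^{2\xi\psi_{0,K}(P)} \leq N e^{2\xi \|\psi_{0,K}\|_{[0,1]^2}}$, which has Gaussian-type tails by the maximum bound; the main issue is therefore a \emph{lower} bound on the denominator that survives raising to the power $\alpha>1$ and taking expectations. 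First I would observe that $L_{1,1}^{(n)}(\psi) \geq e^{\xi \inf_{[0,1]^2}\psi_{0,K}}\sum_{P\in\pi_n^K(\psi)} L^{(K,n)}(\text{subpath in }P)$, and by the scaling and RSW machinery of Section 3 each in-block crossing length is, with overwhelming probability, at least $c\, 2^{-K(1+o(1))}\ell_{n-K}(\psi,p)$; more usefully, I would bound the denominator below directly by $e^{-\xi\|\psi_{0,K}\|}\cdot(\text{number of blocks})\cdot(\text{typical per-block weight})$, i.e. by $2^{K} e^{-\xi\|\psi_{0,K}\|}$ times a factor controlled by the multiplicative structure.

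The key input is the identity (up to $\eps$) $c_\xi = \xi Q - 1$ for the exponent of $\E\, L_{1,1}^{(n)}(\phi)$, which forces $1 - c_\xi - 2\xi = 2 - \xi Q - 2\xi = 2 - \xi(Q+2) > 0$ for \emph{all} $\gamma\in(0,2)$, since $\xi(Q+2) = \frac{\gamma(Q+2)}{d_\gamma}$ and one checks from Theorem 1.2 of \cite{DG18} that this stays below $2$ on the whole range (this is exactly the numerical fact the introduction advertises). Granting this, I would proceed as follows: (i) reduce to showing $\E\big[(\text{numerator})^\alpha\big]^{1/\alpha} \leq e^{-cK}\,\E\big[(\text{denominator})^{2\alpha}\big]^{1/\alpha}$ via Cauchy–Schwarz/Hölder splitting of the ratio (legitimate because numerator and denominator are not independent, so one really estimates $\E[(\text{num}/\text{den}^2)^\alpha]$ directly by first bounding $\text{num}/\text{den}^2 \leq N^{-1} e^{4\xi\,\osc}$ on the bulk event that the weights are comparable, and on the complement using the crude deterministic bound $\text{num}/\text{den}^2 \leq 1$ times a small-probability factor); (ii) on the good event, $\text{num}/\text{den}^2 \leq 2^{-K} e^{C\xi\|\psi_{0,K}\|_{[0,1]^2}}$, and since $\E e^{\gamma'\|\psi_{0,K}\|} \leq e^{\gamma' K + O(\sqrt K)}$ for any $\gamma'<2$ by the maximum bounds, choosing $\alpha>1$ close enough to $1$ keeps $\alpha\cdot C\xi < 2$ and yields $\leq 2^{-K(1+o(1))}$; (iii) the bad event — where the crossing's weight is carried by few blocks — is controlled by the lower-tail estimates \eqref{eq:LowerTailsPhi}/\eqref{eq:LowerTailsPsi} and the percolation/RSW concentration, whose probability is $\leq e^{-cK^2}$ or at least $\leq e^{-cK}$, small enough to beat the deterministic bound $1$. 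Combining, the left side of Condition (T) is $\leq e^{-cK}$ for $K$ large.

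The main obstacle I anticipate is step (iii): making precise and quantitative the statement that ``the crossing weight is not concentrated on a few coarse blocks,'' i.e. ruling out the scenario where $\sum_P e^{\xi\psi_{0,K}(P)}$ is dominated by $O(1)$ large terms while $\sum_P e^{2\xi\psi_{0,K}(P)}$ is of the same order, which would make the ratio $\asymp 1$ rather than $\asymp 2^{-K}$. This is precisely where the subcriticality $\gamma<2$ and the \cite{DG18} exponent must enter: one needs that crossing a box of size $2^{-K}$ built from $\eps$-fine blocks typically costs at least $2^{-K c_\xi}$ in weight with $c_\xi = \xi Q -1 < 1 - 2\xi$, which says the typical per-block contribution times the number of blocks is not dwiped out by the oscillation of $\psi_{0,K}$. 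I would handle this by a union bound over subsets $S\subset\pi_n^K(\psi)$ of blocks of size $|S| \leq 2^{\theta K}$ with $\theta<1$: the event that the weight concentrates on such an $S$ forces the left-right crossing of $[0,1]^2$ (at scale $K$, with fine structure below) to pass essentially through $S$, hence to have a crossing of small-dimensional ``skeleton'' — an event whose probability decays like $e^{-cK^2}$ by iterating the RSW lower-tail estimates and the moment bounds of Section 4, exactly as in the a priori bound $\ell_n \geq 2^{-2\xi k}\ell_{n-k}e^{-C\sqrt k}$ but now with the \emph{sharp} exponent $\xi Q$ in place of $2\xi$. Once this concentration-avoidance probability is in hand, the rest is the Hölder bookkeeping of steps (i)–(ii).
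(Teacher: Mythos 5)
Your proposal correctly identifies that the crux is a lower bound on $\sum_{P\in\pi_n^K(\psi)} e^{\xi\psi_{0,K}(P)}$ that feeds in the \cite{DG18} exponent, but it is missing the structural observation that makes this work across the full range $\gamma\in(0,2)$, and the remedy you sketch for the resulting gap is not one that can be carried out as stated.

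The paper's key idea (Step 2 of its proof) is that the coarse skeleton $\pi_n^K(\psi)$ is connected left-to-right, so concatenating straight segments through its blocks yields a valid left--right crossing $\Gamma_{n,K,\psi}$ of $[0,1]^2$. On each block $P$, the $\phi_{0,K}$-length of the straight segment is at most $2^{-K}e^{\xi\phi_{0,K}(P)}e^{\xi\osc_P(\phi_{0,K})}$, and summing gives
\begin{equation*}
\sum_{P\in\pi_n^K(\psi)} e^{\xi\psi_{0,K}(P)} \;\geq\; e^{-\xi X_1}\,e^{-\xi\max_{P\in\mc P_K}\osc_P(\phi_{0,K})}\; 2^K\, L_{1,1}^{(K)}(\phi).
\end{equation*}
This single inequality dissolves the concentration question you worry about in step (iii): there is no need to rule out the scenario where the weight is carried by few blocks, because the sum is deterministically bounded below by a quantity, $2^K L_{1,1}^{(K)}(\phi)$, whose negative moments are controlled by the lower-tail estimate \eqref{eq:LowerTailsPhi} and whose low quantile obeys the \cite{DG18} bound \eqref{eq:DGlowerBound}. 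Combined with the numerator bound $\sum_P e^{2\xi\psi(P)}\leq (\max_P e^{\xi\psi(P)})\sum_P e^{\xi\psi(P)}$ (which cancels one factor of the denominator and is sharper than your $N\max_P e^{2\xi\psi(P)}$) and a H\"older split, this gives the $2^{-K}\cdot 2^{2\xi K}\cdot \ell_{1,1}^{(K)}(\phi,p)^{-1}e^{C\sqrt K} = 2^{-\xi(Q-2-\eps)K}e^{C\sqrt K}$ contraction.

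By contrast, your lower bound $2^K e^{-\xi\|\psi_{0,K}\|_{[0,1]^2}}$ loses a factor $e^{2\xi\|\psi_{0,K}\|}$ in the ratio, which after taking expectations costs $4^{2\xi K}$ and only yields a contraction when $\xi$ is small (roughly $\xi<1/4$). You recognize this and propose a union bound over small subsets $S\subset\pi_n^K(\psi)$ with $|S|\leq 2^{\theta K}$, but that plan is not worked out and faces a genuine obstruction: the number of such subsets of $\mc P_K\cap[0,1]^2$ is super-exponential in $2^{\theta K}$, and nothing in Sections 3--4 provides a per-subset probability decay fast enough to beat that count. The skeleton-crossing inequality above is exactly the device that lets the paper avoid any such union bound.

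A secondary issue: the inequality you invoke, $2 - \xi(Q+2) > 0$, is not the one the proof actually needs. Plugging $\ell_{1,1}^{(K)}(\phi,p)\geq 2^{-K(1-\xi Q + \xi\eps)}$ into the Step 3 bound produces the exponent $-\xi(Q-2-\eps)$, so what is needed is simply $Q > 2$, which is immediate from $Q = \tfrac{2}{\gamma}+\tfrac{\gamma}{2}$. (The displayed value $c_\xi=\xi Q -1$ in the introduction should be read as $c_\xi=1-\xi Q$, which turns $1-c_\xi-2\xi$ into $\xi(Q-2)$, consistent with the proof; you appear to have followed the typo and then had to appeal to an unnecessary, and less transparent, bound on $d_\gamma$.)
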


\begin{proof}
\textit{Step 1:} Supremum bound. Taking the supremum over all blocks of size $2^{-K}$ in $[0,1]^2$, we get
$$
\frac{\sum_{P \in \pi_n^K(\psi)} e^{2\xi \psi_{0,K}(P)}}{\left( \sum_{P \in \pi_n^K(\psi)} e^{\xi \psi_{0,K}(P)}  \right)^2}  \leq \frac{e^{\xi \max_{P \in \mc{P}_K} \psi_{0,K}(P)}}{\sum_{P \in \pi_n^K(\psi)} e^{\xi \psi_{0,K}(P)}}  \leq \frac{e^{\xi \max_{P \in \mc{P}_K} \phi_{0,K}(P)}}{\sum_{P \in \pi_n^K(\psi)} e^{\xi \psi_{0,K}(P)}}  e^{\xi X_1},
$$
recalling the definition of $X_1$ below $\eqref{DefX}$.

\textit{Step 2:} We give a lower bound of the denominator of the right-hand side. By taking the concatenation of straight paths in each box of $\pi_n^K(\psi)$, we get a left-right crossing of $[0,1]^2$. Denote this crossing by $\Gamma_{n,K,\psi}$. We have,
\begin{multline}
\label{eq:CoarseToPath}
\sum_{P \in \pi_n^K(\psi)} e^{\xi \psi_{0,K}(P)} \geq e^{-\xi X_1} \sum_{P \in \pi_n^K(\psi)} e^{\xi \phi_{0,K}(P)} \\
 \geq e^{-\xi X_1} \exp(- \xi \max_{P \in \mc{P}_K^1} \osc_P(\phi_{0,K})) 2^{K} L^{(K)}(\phi,\Gamma_{n,K,\psi})   \geq e^{-\xi X_1}  \exp(- \xi \max_{P \in \mc{P}_K^1} \osc_P(\phi_{0,K})) 2^{K} L_{1,1}^{(K)}(\phi),
\end{multline}
where $\osc_P$ was defined in \eqref{Def:Osc} and $\mc{P}_K^1$ was defined in \eqref{eq:PK1}.

\textit{Step 3:} Combining the two previous steps, we have
$$
\frac{\sum_{P \in \pi_n^K(\psi)} e^{2\xi \psi_{0,K}(P)}}{\left( \sum_{P \in \pi_n^K(\psi)} e^{\xi \psi_{0,K}(P)}  \right)^2}  \leq \frac{e^{\xi \max_{P \in \mc{P}_K^1} \phi_{0,K}(P)}}{2^{K} L_{1,1}^{(K)}(\phi)} e^{2 \xi X_1} e^{ \xi \max_{P \in \mc{P}_K^1} \osc_P(\phi_{0,K})}.
$$
Now, we take $\alpha > 1$ close to $1$. Using H\"older's inequality with $\frac{1}{r} + \frac{1}{s}  = 1$ and $r$ close to $1$, together with Cauchy-Schwarz, we get
\begin{align*}
& \E \left( \left( \frac{\sum_{P \in \pi_n^K(\psi)} e^{2\xi \psi_{0,K}(P)}}{\left( \sum_{P \in \pi_n^K(\psi)} e^{\xi \psi_{0,K}(P)}  \right)^2}  \right)^{\alpha} \right)^{1/\alpha} \leq 2^{-K} \E \left(  \frac{e^{\alpha \xi \max_{P \in \mc{P}_K} \phi_{0,K}(P)}}{(L_{1,1}^{(K)}(\phi))^{\alpha}} e^{2 \alpha \xi X_1} e^{\alpha \xi \max_{P \in \mc{P}_K^1} \osc_P(\phi_{0,K})} \right)^{1/\alpha} \\
& \leq 2^{-K} \E \left( e^{\alpha r \xi \max_{P \in \mc{P}_K^1} \phi_{0,K}(P)} \right)^{1/\alpha r} \E \left( \left( L_{1,1}^{(K)}(\phi) \right)^{-2 \alpha s} \right)^{1/2 \alpha s} \E \left( e^{8 \alpha s \xi X_1} \right)^{1/4 \alpha s} \E \left( e^{4 \alpha s \xi \max_{P \in \mc{P}_K^1} \osc_P(\phi_{0,K})} \right)^{1/4 \alpha s}.
\end{align*}
Therefore, using \eqref{eq:MaxBoundExp} for the maximum, \eqref{eq:LowerTailsPhi} for the left-right crossing, Proposition \ref{Prop:ComparisonFields} to bound $X_1$ and  \eqref{eq:OscBoundExp} for the maximum of oscillations, we finally get, when $\alpha r \xi < 2$ (recall that $\alpha r$ can be taken arbitrarily close to $1$), 
\begin{equation}
\label{eq:FirstBound}
\E \left( \left( \frac{\sum_{P \in \pi_n^K(\psi)} e^{2\xi \psi_{0,K}(P)}}{\left( \sum_{P \in \pi_n^K(\psi)} e^{\xi \psi_{0,K}(P)}  \right)^2}  \right)^{\alpha} \right)^{1/\alpha}  \leq 2^{-K} 2^{2 \xi K} \ell_{1,1}^{(K)}(\phi,p)^{-1} e^{ C \sqrt{K}}.
\end{equation}

\textit{Step 4:} Lower bound on quantiles. For $\gamma \in (0,2)$, $Q := \frac{2}{\gamma} + \frac{\gamma}{2} > 2$. Using Proposition 3.17 from \cite{DG18} (circle average LFPP) and Proposition 3.3 from \cite{DGo18} (comparison between $\phi_{\delta}$ and circle average), we have, if $p$ is fixed and $\eps \in (0,Q-2)$, for $K$ large enough, 
\begin{equation}
\label{eq:DGlowerBound}
\ell_{1,1}^{(K)}(\phi,p) \geq 2^{-K(1-\xi Q + \xi \eps)}.
\end{equation}

\textit{Step 5:} Conclusion. Using the results from the two previous steps, we finally get
$$
\E \left( \left( \frac{\sum_{P \in \pi_n^K(\psi)} e^{2\xi \psi_{0,K}(P)}}{\left( \sum_{P \in \pi_n^K(\psi)} e^{\xi \psi_{0,K}(P)}  \right)^2}  \right)^{\alpha} \right)^{1/\alpha}  \leq 2^{- \xi (Q - 2 - \eps) K} e^{ C \sqrt{K}},
$$
which completes the proof.
\end{proof}

Now, we come back to the proof of Theorem \ref{thm:AssTthm}. We first derive a priori estimates on the quantile ratios.

\begin{Lem}
\label{Lem:Quantiles/Var}
Let $Z$ be a random variable with finite variance and $p \in (0,1/2)$. If a pair $(\bar{\ell}(Z,p),\ell(Z,p) )$ satisfies $\bar{\ell}(Z,p) \geq \ell(Z,p)$,  $\Pro ( Z \geq \bar{\ell}(Z,p) ) \geq p$ and $ \Pro ( Z \leq \ell(Z,p) ) \geq p$, then, we have:
\begin{equation}
(\bar{\ell}(Z,p) - \ell(Z,p))^2 \leq \frac{2}{p^2} \Var Z.
\end{equation}
\end{Lem}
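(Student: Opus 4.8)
The plan is to avoid any computation with the distribution of $Z$ and instead use the elementary symmetrization identity $\Var Z = \tfrac12\,\E\big[(Z-Z')^2\big]$, where $Z'$ is an independent copy of $Z$, reading off the right-hand side directly from the two quantile tail conditions.

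First I would let $Z'$ be an independent copy of $Z$ and consider the event
\[
E := \{Z \geq \bar{\ell}(Z,p)\} \cap \{Z' \leq \ell(Z,p)\}.
\]
Since $\{Z \geq \bar{\ell}(Z,p)\}$ is a function of $Z$ alone and $\{Z' \leq \ell(Z,p)\}$ is a function of $Z'$ alone, independence gives $\Pro(E) = \Pro(Z \geq \bar{\ell}(Z,p))\,\Pro(Z \leq \ell(Z,p)) \geq p^2$ by the two hypotheses. On $E$ we have $Z - Z' \geq \bar{\ell}(Z,p) - \ell(Z,p) \geq 0$, where the last inequality is exactly the assumption $\bar{\ell}(Z,p) \geq \ell(Z,p)$; squaring (which is order-preserving since both sides are nonnegative) yields $(Z-Z')^2 \geq \big(\bar{\ell}(Z,p) - \ell(Z,p)\big)^2$ on $E$.

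Consequently $\E\big[(Z-Z')^2\big] \geq \big(\bar{\ell}(Z,p) - \ell(Z,p)\big)^2\,\Pro(E) \geq p^2\,\big(\bar{\ell}(Z,p) - \ell(Z,p)\big)^2$, and combining this with $\Var Z = \tfrac12\,\E\big[(Z-Z')^2\big]$ gives $\big(\bar{\ell}(Z,p) - \ell(Z,p)\big)^2 \leq \tfrac{2}{p^2}\,\Var Z$, which is the claim. The finiteness of $\Var Z$ is what makes all these quantities well-defined.

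There is essentially no obstacle: the only points requiring a moment's care are that the sign condition $\bar{\ell}(Z,p) \geq \ell(Z,p)$ is genuinely needed so that the inequality on $E$ survives squaring, and that independence is used only to factor $\Pro(E)$, not any disjointness. An alternative, slightly longer route would apply Chebyshev's inequality to $Z - \E Z$ to bound $(\bar{\ell}(Z,p) - \E Z)_+$ and $(\E Z - \ell(Z,p))_+$ each by $\sqrt{\Var Z / p}$ and then use the triangle inequality, but this produces the weaker constant $4/p$ (still admissible since $p < 1/2$ forces $4/p \leq 2/p^2$); the symmetrization argument above is both shorter and yields exactly the stated constant with no restriction on $p$.
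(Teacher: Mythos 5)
Your proof is correct and is essentially identical to the paper's argument: both use the symmetrization identity $2\Var Z = \E[(Z-Z')^2]$ with an independent copy $Z'$, restrict to the product event where one copy is above $\bar\ell$ and the other below $\ell$, and read off the bound from independence of the two events. The only cosmetic difference is which copy you put above which quantile, which changes nothing.
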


\begin{proof}
If $Z'$ is an independent copy of $Z$, notice that for $l' \geq l$ we have $2 \mathrm{Var}(Z)  = \mathbb{E}( ( Z'-Z )^2 )  \geq \mathbb{E} ( 1_{Z' \geq l'} 1_{Z \leq l} (Z' -Z )^2 ) \geq  \Pro ( Z \geq l' ) \Pro ( Z \leq l ) ( l' - l )^2$.
\end{proof}

In the following lemma, we derive an a priori bound on the variance of $\log L_{1,1}^{(n)}(\phi)$.

\begin{Lem}
\label{Lem:VarApriori}
For all $n \geq 0$ we have the bound
$$
\Var \log L_{1,1}^{(n)}(\phi) \leq \xi^2 (n+1) \log 2
$$
\end{Lem}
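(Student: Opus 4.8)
The plan is to realize $F:=\log L_{1,1}^{(n)}(\phi)$ as a functional of the underlying white noise that is Lipschitz with respect to the Cameron--Martin norm, and then to apply the Gaussian Poincaré inequality. As preliminary observations, note that for any two continuous functions $\psi,\psi'$ on $[0,1]^2$ and any path $\pi$ one has $\int_\pi e^{\xi\psi'}\,ds \le e^{\xi\norme{\psi-\psi'}_{[0,1]^2}}\int_\pi e^{\xi\psi}\,ds$; taking the infimum over left--right crossings of $[0,1]^2$ and using the symmetric inequality gives
\[
\bigl|\log L_{1,1}^{(n)}(\psi) - \log L_{1,1}^{(n)}(\psi')\bigr| \le \xi\,\norme{\psi-\psi'}_{[0,1]^2}.
\]
Moreover, since every left--right crossing of $[0,1]^2$ has Euclidean length at least $1$ (and the straight segment has length $1$), we get $|F| \le \xi\,\norme{\phi_{0,n}}_{[0,1]^2}$, which together with the maximum tail bound \eqref{eq:MaxBoundTail} shows $F\in L^2$ and $L_{1,1}^{(n)}(\phi)\in(0,\infty)$ a.s., so that $\Var F$ is finite and the statement is meaningful.

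Next I would set up the white-noise picture. Write $\phi_{0,n}(x) = \sqrt\pi\int_{2^{-2n}}^1\int_{\R^2}p_{t/2}(x-y)\,W(dy,dt)$, so that $F$ is a measurable function of the restriction of $W$ to $[2^{-2n},1]\times\R^2$, whose Cameron--Martin space is $H:=L^2([2^{-2n},1]\times\R^2)$. Shifting $W$ by $h\in H$ replaces $\phi_{0,n}$ by $\phi_{0,n}+\phi^h$, where $\phi^h(x) := \sqrt\pi\int_{2^{-2n}}^1\int_{\R^2}p_{t/2}(x-y)h(y,t)\,dy\,dt$. By Cauchy--Schwarz and the identity $\int_{\R^2}p_{t/2}(z)^2\,dz = \frac{1}{2\pi t}$,
\[
|\phi^h(x)|^2 \le \pi\Bigl(\int_{2^{-2n}}^1\int_{\R^2}p_{t/2}(x-y)^2\,dy\,dt\Bigr)\norme{h}_H^2 = \Bigl(\tfrac12\int_{2^{-2n}}^1\tfrac{dt}{t}\Bigr)\norme{h}_H^2 = (n\log 2)\,\norme{h}_H^2,
\]
uniformly in $x\in[0,1]^2$, hence $\norme{\phi^h}_{[0,1]^2}\le\sqrt{n\log 2}\,\norme{h}_H$. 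Combined with the Lipschitz bound of the previous paragraph, this yields $|F(W+h)-F(W)| \le \xi\sqrt{n\log 2}\;\norme{h}_H$ for every $h\in H$, i.e.\ $F$ is Cameron--Martin--Lipschitz with constant $\xi\sqrt{n\log 2}$.

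To conclude, I would invoke the standard fact that a square-integrable Cameron--Martin--Lipschitz functional lies in the Malliavin--Sobolev space $\mathbb D^{1,2}$ with $\norme{DF}_H \le \xi\sqrt{n\log 2}$ almost surely, so that the Gaussian Poincaré inequality gives $\Var F \le \E\,\norme{DF}_H^2 \le \xi^2 n\log 2 \le \xi^2(n+1)\log 2$, which is the claim. There is no essential obstacle here; the only point deserving care is the passage from the pathwise Lipschitz estimate to the variance bound. If one prefers to avoid Malliavin calculus, it can be done by hand: decompose $\phi_{0,n}=\sum_{j=0}^{n-1}\phi_j$ into the independent scale fields, each of pointwise variance $\log 2$, approximate each white noise increment by its projection onto a finite orthonormal family, apply the finite-dimensional Gaussian Poincaré inequality to the resulting Lipschitz function (whose gradient has squared norm at most $\xi^2\log 2$ per scale by the same kernel computation restricted to $t\in[2^{-2j-2},2^{-2j}]$), sum the $n$ contributions, and pass to the limit using the uniform Lipschitz bound; equivalently one may run a scale-by-scale martingale decomposition $M_j=\E[F\mid\sigma(\phi_0,\dots,\phi_{j-1})]$ and bound each increment variance by $\xi^2\log 2$ in the same way. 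Everything else is an elementary computation.
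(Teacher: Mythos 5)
Your proof is correct, and the core idea is the same as the paper's: $\log L_{1,1}^{(n)}$ is $\xi$-Lipschitz in $\norme{\cdot}_{[0,1]^2}$, the field is $\sqrt{n\log 2}$-Lipschitz from the Cameron--Martin space to $L^\infty([0,1]^2)$ (via $\int_{\R^2}p_{t/2}^2 = \tfrac{1}{2\pi t}$ and Cauchy--Schwarz), so a Gaussian Poincar\'e inequality closes it. Where you differ is in \emph{which} Poincar\'e inequality you invoke: your primary route works directly in infinite dimensions, via the standard (but not entirely elementary) fact that an $L^2$, Cameron--Martin--Lipschitz functional lies in $\mathbb D^{1,2}$ with $\norme{DF}_H$ bounded by the Lipschitz constant. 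The paper instead discretizes: it introduces $L_{1,1}^{(n)}(D_k)$ for the metric driven by the piecewise-constant field $\phi_{0,n}^k$ (constant on dyadic blocks of scale $2^{-k}$, equal to the center value), controls $|\log L_{1,1}^{(n)}(D_k) - \log L_{1,1}^{(n)}|$ by $C\,2^{-k}\norme{\nabla\phi_{0,n}}_{[0,1]^2}$ so that $\Var \log L_{1,1}^{(n)}(D_k) \to \Var\log L_{1,1}^{(n)}$, and then applies the \emph{finite}-dimensional Gaussian Poincar\'e inequality: writing the $4^k$ centered values as $Y=AN$ with $N$ standard Gaussian and $A=K^{1/2}$, the composition $f(AN)$ is $\xi\sigma$-Euclidean-Lipschitz with $\sigma^2 = \max_i |A_i|^2 = \max_i \Var(Y_i)$, i.e.\ the pointwise variance. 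Your sketch of the scale-by-scale / finite-projection alternative at the end is in effect the paper's argument. Both routes are sound; the paper's is more self-contained (no Malliavin calculus needed), while yours is slightly cleaner once the $\mathbb D^{1,2}$ fact is granted, and even gives the marginally sharper constant $\xi^2 n\log 2$.
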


\begin{proof}
Denote by $L_{1,1}^{(n)}(\mathrm{D}_k)$ the left-right distance of $[0,1]^2$ for the length metric $e^{\xi \phi_{0,n}^k} ds$, where $\phi_{0,n}^k$ is piecewise constant on each dyadic block of size $2^{-k}$ where it is equal to the value of $\phi_{0,n}$ at the center of this block. (We do not assign an independent meaning to the notation $\mathrm{D}_k$.) Note that we have
$$
e^{-C 2^{-k} \norme{\con \phi_{0,n}}_{[0,1]^2} } L_{1,1}^{(n)} \leq L_{1,1}^{(n)}(\mathrm{D}_k) \leq L_{1,1}^{(n)} e^{C 2^{-k} \norme{\con \phi_{0,n}}_{[0,1]^2}},
$$
which gives almost surely that $L_{1,1}^{(n)}(\phi) = \lim_{k \to \infty} L_{1,1}^{(n)}(\mathrm{D}_k)  $. By dominated convergence we have
$$
\Var \log L_{1,1}^{(n)}(\phi)  = \lim_{k \to \infty} \Var \log L_{1,1}^{(n)}(\mathrm{D}_k).
$$
Now, $\log L_{1,1}^{(n)}(\mathrm{D}_k)$ is a $\xi$-Lipschitz function of $p=4^k$ Gaussian variables denoted by $Y = (Y_1, . . . , Y_p)$, where on $\R^p$ we use the supremum metric. We can write $Y = A N$ for some symmetric positive semidefinite matrix $A$ and standard Gaussian vector $N$ on $\mb{R}^{4^k}$. Then $\log L_{1,1}^{(n)}(\mathrm{D}_k) = f(Y) = f(A N)$ which is $\xi \sigma$-Lipschitz as a function of $N$ where $\sigma = \max(|A_1|, . . . , |A_p|)$. By the Gaussian concentration inequality of \cite[Lemma 2.1]{DZZ18}, applied as in \cite[Lemma 5.8]{DD18}, since the pointwise variance of the field is $(n+1) \log 2$ we have
\begin{equation*}
\Var \log L_{1,1}^{(n)}(\mathrm{D}_k) \leq \max(\Var(Y_1), . . . , \Var(Y_p)) = \xi^2 (n+1) \log 2.\qedhere
\end{equation*}\end{proof}
Before stating the following lemma, we refer the reader to the definition of quantile ratios in  \eqref{DefQuant}.
\begin{Lem}[A priori bound on the quantile ratios]
\label{Lem:Apriori}
Fix $p \in (0,1/2)$. There exists a constant $C_p$ depending only on $p$ such that for all $n \geq 1$,
\begin{equation}
\label{eq:apriori}
\Lambda_n(\psi,p) \leq e^{C_p \sqrt{n}}.
\end{equation}
\end{Lem}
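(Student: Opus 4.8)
The plan is to combine the quantile--variance inequality of Lemma~\ref{Lem:Quantiles/Var} with the a priori variance bound of Lemma~\ref{Lem:VarApriori}, and then transfer the resulting estimate from $\phi$ to $\psi$ via the comparison~\eqref{eq:RatiosPsiPhi}.

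\textbf{Step 1: the bound for $\phi$.} Fix $p\in(0,1/2)$ and $k\geq 1$. I would apply Lemma~\ref{Lem:Quantiles/Var} to the random variable $Z:=\log L_{1,1}^{(k)}(\phi)$, which has finite variance by Lemma~\ref{Lem:VarApriori} (and is a.s. finite and positive, since $\phi_{0,k}$ is smooth and hence $L_{1,1}^{(k)}(\phi)\geq e^{\xi\min_{[0,1]^2}\phi_{0,k}}>0$). The pair $(\log\bar\ell_k(\phi,p),\log\ell_k(\phi,p))$ is admissible for that lemma: by definition of the quantiles, $\bar\ell_k(\phi,p)=\ell_k(\phi,1-p)\geq\ell_k(\phi,p)$ because $p<1/2$ and $p\mapsto\ell_k(\phi,p)$ is increasing, $\Pro(Z\leq\log\ell_k(\phi,p))=\Pro(L_{1,1}^{(k)}(\phi)\leq\ell_k(\phi,p))=p$, and $\Pro(Z\geq\log\bar\ell_k(\phi,p))\geq\Pro(L_{1,1}^{(k)}(\phi)>\ell_k(\phi,1-p))=p$. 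Lemma~\ref{Lem:Quantiles/Var} then gives
\[
\left(\log\frac{\bar\ell_k(\phi,p)}{\ell_k(\phi,p)}\right)^2 \leq \frac{2}{p^2}\,\Var\log L_{1,1}^{(k)}(\phi) \leq \frac{2\xi^2\log 2}{p^2}\,(k+1),
\]
the last step by Lemma~\ref{Lem:VarApriori}. Taking square roots, using $\sqrt{k+1}\leq\sqrt{2n}$ for $1\leq k\leq n$, and taking the maximum over $k\leq n$ (the term $k=0$ contributes $1$ to $\Lambda_n$), one obtains $\Lambda_n(\phi,p)\leq e^{C_p\sqrt n}$ with $C_p$ depending only on $p$ (and $\xi$).

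\textbf{Step 2: transfer to $\psi$.} To conclude I would invoke~\eqref{eq:RatiosPsiPhi}, which gives $\Lambda_n(\psi,p/2)\leq C_p\Lambda_n(\phi,p)$ uniformly in $n$; applied at parameter $2p$ (legitimate when $2p<1/2$), combined with Step~1 this yields $\Lambda_n(\psi,p)\leq C_p e^{C_p\sqrt n}$, which can be absorbed into $e^{C_p'\sqrt n}$. For $p\in[1/4,1/2)$ one instead uses that $p\mapsto\Lambda_n(\psi,p)$ is non-increasing --- since $\ell_k(\psi,p)$ is increasing and $\bar\ell_k(\psi,p)=\ell_k(\psi,1-p)$ is decreasing in $p$ --- so the estimate at a fixed small value of $p$ already covers all larger $p$. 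Alternatively, one can bypass~\eqref{eq:RatiosPsiPhi} and simply rerun Step~1 directly for $\psi$: the proof of Lemma~\ref{Lem:VarApriori} applies verbatim to $\psi_{0,n}$, since $L_{1,1}^{(n)}(\psi)$ is again an a.s. limit of $\xi$-Lipschitz functions of finitely many Gaussian variables whose pointwise variance is $O(n)$.

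\textbf{Main obstacle.} There is no genuine difficulty: the statement follows from two already-established lemmas together with the $\phi$--$\psi$ comparison. The only points requiring a little care are checking that the pair of log-quantiles meets the hypotheses of Lemma~\ref{Lem:Quantiles/Var} (which uses the exactness of the quantiles of $L_{1,1}^{(k)}$) and handling the full parameter range $p\in(0,1/2)$ in the transfer, via the monotonicity of $\Lambda_n$ in $p$.
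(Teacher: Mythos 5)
Your proof is correct, and the alternative you mention at the end is exactly what the paper does: it applies Lemma~\ref{Lem:VarApriori} directly to $\psi$ (the Gaussian Poincar\'e argument there only uses that $\log L_{1,1}^{(k)}$ is an a.s.\ limit of $\xi$-Lipschitz functions of a Gaussian vector with pointwise variance $O(k)$, and this holds just as well for $\psi_{0,k}$ since $0\leq\Phi\leq 1$ means the truncated kernel has no larger variance than the untruncated one), and then feeds $Z_k=\log L_{1,1}^{(k)}(\psi)$ into Lemma~\ref{Lem:Quantiles/Var}. That is the cleaner option, as it avoids both the $\phi$--$\psi$ transfer and the case analysis in $p$. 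Your primary route --- proving the bound for $\phi$ and then passing to $\psi$ via~\eqref{eq:RatiosPsiPhi}, covering $p\in[1/4,1/2)$ by the monotonicity of $\Lambda_n$ in $p$ --- is also valid; it just incurs an unnecessary detour through the field comparison, whereas the whole point of introducing $\psi$ alongside $\phi$ is that the Poincar\'e/Efron--Stein machinery applies to it directly.
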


\begin{proof}
By using Lemma \ref{Lem:VarApriori} we get $\Var(\log L_{1,1}^{(k)}(\psi)) \leq C k$ for all $1 \leq k \leq n$ and an absolute constant $C > 0$. This implies the same bound for $\psi$ by Proposition~\ref{Prop:ComparisonFields}. Using then Lemma \ref{Lem:Quantiles/Var} with $Z_k = \log L_{1,1}^{(k)}(\psi)$ for $k \leq n$, we finally get the bound $\max_{k \leq n} \frac{ \bar{\ell}_k(\psi,p)}{{\ell_k}(\psi,p)} \leq e^{C_p \sqrt{n}}$.
\end{proof}

\begin{proof}[Proof of Theorem \ref{thm:AssTthm}] 

The proof is divided in five steps. $K$ will denote a large positive number to be fixed at the last step.

\textit{Step 1.} Quantiles-variance relation / setup. We aim to get an inductive bound on $\Lambda_n(\psi,p)$. We will therefore bound $\frac{{\bar{\ell}_n}(\psi,p/2)}{{\ell_n}(\psi,p/2)}$ in term of $\Lambda$'s at lower scales. $p$ will be fixed from now on, small enough so that we have the tail estimates from Section \ref{Sec:TailEstimates} for $\phi$ with $p$ and for $\psi$ with $p/2$. The starting point is the bound
\begin{equation}
\label{eq:Step1}
\frac{{\bar{\ell}_n}(\psi,p/2)}{{\ell_n}(\psi,p/2)} \leq e^{C_p \sqrt{\Var \log L_{1,1}^{(n)}(\psi)}}.
\end{equation}

\textit{Step 2.} Efron-Stein. Using the Efron-Stein inequality with the block decomposition of $\psi_{0,n}$ introduced in \eqref{eq:BlockDecompoPsi}, defining the length with respect to the unresampled field $L_n(\psi) = L^{(n)}_{1,1}(\psi)$, we get
\begin{equation}
\label{eq:EfronStein}
\Var \log L_{1,1}^{(n)}(\psi) \leq   \E \left( \left( \log L_n^K(\psi) - \log L_n(\psi) \right)_{+}^2  \right)  + \sum_{P \in \mc{P}_K} \E \left( \left( \log L_n^P(\psi) - \log L_n(\psi) \right)_{+}^2 \right),
\end{equation}
where in the first term (resp. second term) we resample the field $\psi_{0,K}$ (resp. $\psi_{K,n,P}$) to get an independent copy $\tilde{\psi}_{0,K}$ (resp. $\tilde{\psi}_{K,n,P}$) and we consider the left-right distance $L_n^K(\psi)$ (resp. $L_n^P(\psi)$) of the unit square associated to the field $\psi_{0,n} - \psi_{0,K} + \tilde{\psi}_{0,K}$ (resp. $\psi_{0,n} - \psi_{K,n,P} + \tilde{\psi}_{K,n,P}$).

\textit{Step 3.} Analysis of the first term. For the first term, using Gaussian concentration as in the proof of Lemma \ref{Lem:VarApriori}, we get
\begin{equation}
\label{eq:SndTerm}
\E((\log L_n^K(\psi)- \log L_n(\psi))^2) = 2 \E (\Var (\log L_n(\psi) | \psi_{0,n} - \psi_{0,K})) \leq C K.
\end{equation}

\textit{Step 4.} Analysis of the second term. For $P \in \mc{P}_K$, if $L_n^P(\psi) > L_n(\psi)$, the block $P$ is visited by the geodesic $\pi_n(\psi)$ associated to $L_n(\psi)$. Define
\begin{equation}
\label{Def:PK}
P^K := \lbrace Q \in \mc{P}_K ~ : ~ d(P,Q) \leq C K^{\eps_0} 2^{-K} \rbrace.
\end{equation}
where we recall that $\eps_0$ is associated with the range of dependence of the resampled field $\tilde{\psi}_{K,n,P}$ through \eqref{Def:sigma} (see also the subsection following this definition).  Here, $d(P,Q)$ is the $L^{\infty}$-distance between the sets $P$ and $Q$.

We upper-bound $L_n^P(\psi)$ by taking the concatenation of the part of $\pi_n(\psi)$ outside of $P^{K}$ together with four geodesics associated to long crossings in rectangles comprising a circuit around $P^K$ (for the field $\psi_{0,n}$ which coincides with the field $\psi_{0,n}^P$ outside of $P^{K}$). We get, introducing the rectangles $(Q_{i}(P))_{1 \leq i \leq 4}$ of size $2^{-K}(CK^{\eps_0},3)$ surrounding $P^K$ ($P^K$ and its $3 \cdot 2^{-K}$ neighborhood form an annulus, and gluing the four crossings gives a circuit in this annulus) and using the inequality $\log x \leq x -1$,
\begin{equation}
\label{eq:Ineq}
\left( \log L_n^P(\psi) - \log L_n(\psi) \right)_{+} \leq \frac{( L_{n}^P(\psi) - L_n(\psi) )_{+} }{L_{1,1}^{(n)}(\psi)} \leq 4 \frac{\max_{1 \leq i \leq 4}L^{(n)}(Q_i(P),\psi)}{L_{1,1}^{(n)}(\psi)}.
\end{equation}

$\bullet$ We recall the notation $\phi_{0,K}(P)$ to denote the value of the field $\phi_{0,K}$ at the center of $P$. We bound from above each term in the maximum of \eqref{eq:Ineq} as follows:
\begin{align*}
L^{(n)}(Q_i(P),\psi) & \leq e^{\xi X} L^{(n)}(Q_i(P),\phi) \\
& \leq e^{\xi X} e^{\xi \phi_{0,K}(P)} e^{\xi \osc_{P^K} (\phi_{0,K})} L^{(K,n)}(Q_i(P),\phi) \\
& \leq  e^{2\xi X} e^{\xi \psi_{0,K}(P)} e^{\xi \osc_{P^K} (\phi_{0,K})} L^{(K,n)}(Q_i(P),\phi),
\end{align*}
where the oscillation $\osc$ is defined in $\eqref{Def:Osc}$ and $P^K$ is defined in $\eqref{Def:PK}$. 

For a rectangle $Q$ of size $2^{-K}$, with corners in $2^{-K} \Z^2$, we denote by  $(R_{i}^L(Q))_{1 \leq i \leq 4}$ the four long rectangles of size $2^{-K}(3,1)$ surrounding $Q$. We can upper-bound the rectangle crossing lengths associated to the $Q_i(P)$'s by gluing $O(K^{\eps_0})$ rectangle crossings of size $2^{-K}(3,1)$, which include an annulus around each block $Q$ of size $2^{-K}(1,1)$ (with corners in $2^{-K}\Z^2$) in the shaded region $A^K$ of Figure \ref{fig:Step4}. We get
$$
\max_{1 \leq i \leq 4}L^{(K,n)}(Q_i(P),\phi) \leq C K^{\eps_0} \max_{Q \in A^K, 1\leq i \leq 4} L^{(K,n)}(R_{i}^L(Q),\phi)
$$
and we end up with the following upper bound:
\begin{equation}
\label{eq:Num}
\left( \log L_n^P(\psi) - \log L_n(\psi) \right)_{+} \leq   e^{2\xi X} \frac{e^{\xi \psi_{0,K}(P)}}{L_{1,1}^{(n)}(\psi)} e^{\xi \osc_{P^K} (\phi_{0,K})}  C K^{\eps_0} \max_{Q \in A^K, 1\leq i \leq 4} L^{(K,n)}(R_{i}^L(Q),\phi).
\end{equation}

\begin{figure}[ht]
\centering
\includegraphics[scale=1]{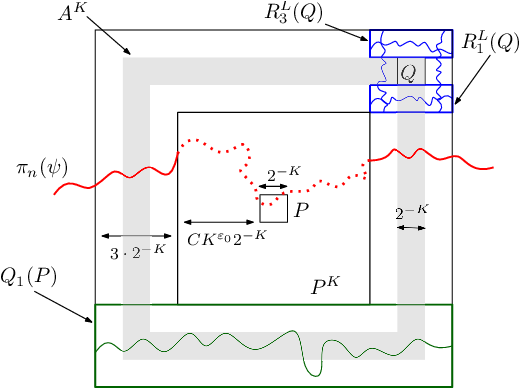}
\caption{Illustration of the geodesics used in the upper bound of Step 4.} 
\label{fig:Step4}
\end{figure}

$\bullet$ We lower-bound the denominator of \eqref{eq:Num} as follows. If $P \in \mc{P}_K$ is visited by a $\pi_{n}(\psi)$ geodesic, then there are at least two short disjoint rectangle crossings among the four surrounding $P$. Therefore, if we denote by $\hat{P}$ the box containing $P$ at its center whose size is three times that of $P$,
\begin{align*}
\int_{\pi_{n}(\psi) \cap \hat{P}} e^{\xi \psi_{0,n}}  ds & \geq 2 \min_{1 \leq i \leq 4} L^{(n)}(R_i^S(P),\psi) \geq e^{-\xi X}  \min_{1 \leq i \leq 4} L^{(n)}(R_i^S(P),\phi) \\
& \geq e^{-\xi X} e^{\xi \phi_{0,K}(P)} e^{-\xi \osc_{\hat{P}}(\phi_{0,K})}   \min_{1 \leq i \leq 4} L^{(K,n)}(R_i^S(P),\phi) \\
& \geq e^{-2\xi X} e^{\xi \psi_{0,K}(P)} e^{-\xi \osc_{\hat{P}}(\phi_{0,K})}  \min_{1 \leq i \leq 4} L^{(K,n)}(R_i^S(P),\phi),
\end{align*}
where $(R_{i}^S(P))_{1 \leq i \leq 4}$ denote the four short rectangles of size $2^{-K}(1,3)$ surrounding $P$. Summing over all $P$'s and taking uniform bounds for the rectangle crossings at higher scales, 
\begin{align*}
L_{1,1}^{(n)}(\psi) & = \sum_{P \in \mc{P}_{K}} \int_{P \cap \pi_{n}(\psi)} e^{\xi \psi_{0,n}} ds  
 \geq \frac{1}{9}  \sum_{P \in \mc{P}_{K}} \int_{\hat{P} \cap \pi_{n}(\psi)} e^{\xi \psi_{0,n}} ds  \\
& \geq \frac{1}{9} e^{-2 \xi X} \left(  \min_{P \in \mc{P}_K^1}  \min_{1 \leq i \leq 4} L^{(K,n)}(R_i^S(P),\phi) \right)  \left( \sum_{P \in \mc{P}_{K}, P \cap \pi_{n}(\psi) \neq \emptyset}  e^{\xi \psi_{0,K}(P)} e^{- \xi \osc_{\hat{P}}(\phi_{0,K})} \right).
\end{align*}
Therefore, taking a uniform bound for the oscillation, we get
\begin{align}
L_{1,1}^{(n)}(\psi) & \geq  \frac{1}{9} e^{-2 \xi X} \left( \sum_{P \in \pi_n^K(\psi)} e^{\xi \psi_{0,K}(P)} e^{-\xi \osc_{\hat{P}}(\phi_{0,K})} \right) \min_{P \in \mc{P}_K^1, 1\leq i \leq 4} L^{(K,n)}(R_i^S(P),\phi) \\ 
& \label{eq:Denum} \geq  \frac{1}{9} e^{-2 \xi X } e^{-\xi \max_{P \in \mc{P}_K^1} \osc_{\hat{P}}(\phi_{0,K})}  \min_{P \in \mc{P}_K^1, 1\leq i \leq 4} L^{(K,n)}(R_i^S(P),\phi)  \sum_{P \in \pi_n^K(\psi)} e^{\xi \psi_{0,K}(P)}.
\end{align}

$\bullet$ We recall that $(R_{i}^L(P))_{1 \leq i \leq 4}$ denote the four rectangles of size $2^{-K}(3,1)$ surrounding $P$. Gathering inequalities \eqref{eq:Num} and \eqref{eq:Denum}, we have
\begin{multline*}
\sum_{P \in \mc{P}_K} \E \left( \left( \log L_n^P(\psi) - \log L_n(\psi) \right)_{+}^2 \right) \\
\leq  C K^{2\eps_0} \E \left( \frac{\sum_{P \in \pi_n^K(\psi)} e^{2\xi \psi_{0,K}(P)}}{\left( \sum_{P \in \pi_n^K(\psi)} e^{\xi \psi_{0,K}(P)}  \right)^2}  \left( \frac{ \max_{P \in \mc{P}_K^1, 1\leq i \leq 4} L^{(K,n)}(R_i^L(P),\phi)}{ \min_{P \in \mc{P}_K^1, 1\leq i \leq 4} L^{(K,n)}(R_i^S(P),\phi)} \right)^2  e^{C \xi \max_{P \in \mc{P}_K^1} \osc_{P^K} (\phi_{0,K})} e^{8 \xi X }\right).
\end{multline*}

$\bullet$ Condition (T) gives us a $\alpha > 1$ and $c >0$ so that for $K$ large enough, for $n \geq K$,
$$
\E \left( \left( \frac{\sum_{P \in \pi_n^K(\psi)} e^{2\xi \psi_{0,K}(P)}}{\left( \sum_{P \in \pi_n^K(\psi)} e^{\xi \psi_{0,K}(P)}  \right)^2} \right)^{\alpha} \right)^{1/\alpha} \leq e^{-c K}.
$$
Then, by using the gradient estimate \eqref{eq:OscBoundExp} and recalling the definition of $\mc{P}^{K}$ in \eqref{Def:PK}, we have
\begin{equation}
\label{eq:EffectE0}
\E \left( e^{C \max_{P \in \mc{P}_K^1} \osc_{P^K}(\phi_{0,K})} \right) \leq \E \left( e^{C K^{\eps_0} 2^{-K} \norme{ \con \phi_{0,K} }_{[0,1]^2}} \right) \leq e^{C K^{\frac{1}{2} + \eps_0}}.
\end{equation}
It is for the second inequality that in \eqref{Def:sigma} we take $\eps_0$ to be small in the definition of $\psi$;  $\eps_0 < 1/2$ is sufficient. Furthermore, using our tail estimates with regard to upper and lower quantiles for $\phi$ (see \eqref{eq:LowerTailsPhi} and \eqref{eq:UpperTailsPhi}, and the scaling property \eqref{eq:scaling-length}, for $\beta > 1$ so that $\frac{1}{\alpha}+\frac{1}{\beta} = 1$, we get
\begin{equation}
\label{eq:FirstIneq}
\E \left( \left( \frac{ \max_{P \in \mc{P}_K^1, 1\leq i \leq 4} L^{(K,n)}(R_i^L(P),\phi)}{\min_{P \in \mc{P}_K^1, 1\leq i \leq 4} L^{(K,n)}(R_i^S(P),\phi)} \right)^{2\beta} \right)^{\frac{1}{\beta}} \leq \Lambda_{n-K}^2 (\phi,p) e^{C K^{\frac{1}{2} + \eps_0}}.
\end{equation}
Note that we could have a $\log K$ term instead of the $K^{\eps_0}$ in \eqref{eq:FirstIneq}. Altogether, by applying H\"older inequality and Cauchy-Schwarz, we get
\begin{equation}
\label{eq:FstTerm}
\sum_{P \in \mc{P}_K} \E \left( \left( \log L_n^P(\psi) - \log L_n(\psi) \right)_{+}^2 \right) \leq e^{-cK} e^{C K^{\frac{1}{2}+\eps_0}} \Lambda_{n-K}^2(\phi,p) \leq e^{-cK} e^{C K^{\frac{1}{2} + \eps_0}}  C_p \Lambda_{n-K}^2(\psi,p/2),
\end{equation}
where we used \eqref{eq:RatiosPsiPhi} in the last inequality to get $ \Lambda_{n-K}^2(\phi,p)   \leq C_p \Lambda_{n-K}^2(\psi,p/2)$.

\textit{Step 5.} Conclusion. Gathering the bounds obtained in Step 3 (inequality \eqref{eq:SndTerm}) and Step 4 (inequality \eqref{eq:FstTerm}), we get, coming back to the inequality \eqref{eq:EfronStein}, for $K$ large enough,
\begin{equation}
\label{eq:RecursiveIneq}
\Var \log L_{1,1}^{(n)}(\psi) \leq C_1K + e^{-C_2 K} \Lambda_{n-K}^2(\psi,p/2).
\end{equation}
Now, we will show that this bound together with the a priori bound on the quantile ratios (Lemma \ref{Lem:Apriori}) is enough to conclude first that $\Lambda_{\infty}(\psi,p/2) < \infty$ and then that $\sup_{n \geq 0} \Var \log L_{1,1}^{(n)}(\psi) < \infty$, using the tail estimates \eqref{eq:LowerTailsPhi} and \eqref{eq:UpperTailsPsi}. 

Coming back to Step 1 (equation \eqref{eq:Step1}) and using \eqref{eq:RecursiveIneq}, we get the inductive inequality \eqref{Fin:ind} below for $K$ large enough and $n \geq K$, and  \eqref{Fin:ap} below by the a priori bound on the quantile ratios Lemma~\ref{Lem:Apriori}:
\begin{align}
\label{Fin:ind} \frac{{\bar{\ell}_n}(\psi,p/2)}{{\ell_n}(\psi,p/2)} & \leq e^{C_p \sqrt{\Var \log L_{1,1}^{(n)}(\psi)}} \leq e^{C_p \sqrt{C_1K + e^{-C_2 K} \Lambda_{n-K}^2(\psi,p/2)}}; \\
\label{Fin:ap}\Lambda_K(\psi,p/2)  &\leq e^{\tilde{C}_p \sqrt{K}}.
\end{align} 
From now on, we take $K$ large enough but fixed so that
\begin{equation}
\label{Fin:fixK}
 e^{-C_2 K} (e^{\tilde{C}_p \sqrt{K}} +e^{C_p \sqrt{2 C_1 K}} )^2  \leq C_1 K.
\end{equation}
Set 
\begin{equation}
\label{Def:drec}
\Lambda_{\mathrm{Rec}} := \Lambda_K(\psi,p/2) \vee e^{C_p \sqrt{2 C_1 K}}.
\end{equation} so that $\Lambda_K(\psi,p/2) \leq \Lambda_{\mathrm{Rec}}$. This is the initialization of the induction. Now, assume that $\Lambda_{n-1}(\psi,p/2) \leq \Lambda_{\mathrm{Rec}}$. In particular, $\Lambda_{n-K}(\psi,p/2) \leq \Lambda_{\mathrm{Rec}}$ and using \eqref{Fin:ind}
$$
\frac{{\bar{\ell}_n}(\psi,p/2)}{{\ell_n}(\psi,p/2)} \leq e^{C_p \sqrt{C_1 K + e^{-C_2 K} \Lambda_{\mathrm{Rec}}^2}}
$$
The right-hand side is smaller than $e^{C_p \sqrt{2C_1 K}}$ and therefore than $\Lambda_{\mathrm{Rec}}$. Indeed, by \eqref{Def:drec}, \eqref{Fin:ap} and \eqref{Fin:fixK},
$$
e^{-C_2 K} \Lambda_{\mathrm{Rec}}^2 \leq e^{-C_2 K} (\Lambda_K(\psi,p/2) + e^{C_p \sqrt{2 C_1 K}} )^2 \leq e^{-C_2 K} (e^{\tilde{C}_p \sqrt{K}} +e^{C_p \sqrt{2 C_1 K}} )^2  \leq C_1 K.
$$
Therefore, 
$$
\Lambda_n(\psi,p/2) = \Lambda_{n-1}(\psi,p/2) \vee  \frac{{\bar{\ell}_n}(\psi,p/2)}{{\ell_n}(\psi,p/2)}  \leq \Lambda_{\mathrm{Rec}}.
$$
Therefore, $\Lambda_{\infty}(\psi,p/2) < \infty$ thus $\Lambda_{\infty}(\phi,p) < \infty$ and by the tail estimates \eqref{eq:LowerTailsPhi} and \eqref{eq:UpperTailsPhi}, the sequence $(\log L_{1,1}^{(n)}(\phi) - \log \lambda_n(\phi))_{n \geq 0}$ is tight.
\end{proof}

\subsection{Weak multiplicativity of the characteristic length and error bounds}

Henceforth, we will only consider the case $\xi = \frac{\gamma}{d_{\gamma}}$ for $\gamma \in (0,2)$ and the field $\phi_{0,n}$. All observables will be assumed to be taken with respect to $\phi$ and we will drop the additional notation used to differ between $\phi$ and $\psi$. In this case, we saw that there exists a fixed constant $C > 0$ so that for all $n \geq 0$, $\bar{\ell}_{3,1}^{(n)}(p) \leq C \bar{\ell}_{1,3}^{(n)}(p)$, $C^{-1} \ell_{3,1}^{(n)}(p) \leq  \ell_{1,3}^{(n)}(p)$ and with the tail estimates, $\E (L_{3,1}^{(n)}) \leq C \E (L_{1,3}^{(n)})$. All these characteristic lengths are uniformly comparable. We will take $\lambda_n$ to denote one of them, say the median of $L_{1,1}^{(n)}$.

In the next elementary lemma, we prove that a sequence satisfying a certain quantitative weak multiplicative property has an exponent, and we quantify the error.
\begin{Lem}
\label{Lem:RealAnalysis}
Consider a sequence of positive real numbers $(\lambda_n)_{n \geq 1}$. If there exists $C > 0$ such that for all $ n \geq 1$, $k \geq 1$ we have
\begin{equation}
\label{eq:WeakMulAs}
e^{-C \sqrt{k}} \lambda_n \lambda_k \leq \lambda_{n+k} \leq e^{C \sqrt{k}} \lambda_n \lambda_k,
\end{equation}
then there exists $\rho > 0$ such that $\lambda_{n} = \rho^{n + O(\sqrt{n})}$. 
\end{Lem}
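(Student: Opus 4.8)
The plan is to take logarithms and run a Fekete-type argument with quantitative error control. Set $a_n := \log\lambda_n$, so that the hypothesis \eqref{eq:WeakMulAs} becomes $|a_{n+k}-a_n-a_k|\le C\sqrt{k}$ for all $n,k\ge 1$. First I would record the elementary iteration: for fixed $N\ge 1$ and any $m\ge 1$, adding $N$ to itself repeatedly and using the triangle inequality gives, by induction on $m$,
\begin{equation*}
|a_{mN}-m\,a_N|\le (m-1)C\sqrt{N}\le mC\sqrt{N}.
\end{equation*}
Applying this with $m=N$ and $N$ replaced by $2^j$ shows $|a_{2^jN}-N a_{2^j}|\le NC\,2^{j/2}$, and applying the hypothesis with $n=k=2^j$ shows $|a_{2^{j+1}}-2a_{2^j}|\le C\,2^{j/2}$, hence $|2^{-(j+1)}a_{2^{j+1}}-2^{-j}a_{2^j}|\le \tfrac{C}{2}2^{-j/2}$, which is summable in $j$.

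Consequently $(2^{-j}a_{2^j})_{j\ge 0}$ is Cauchy; let $\log\rho$ denote its limit (so $\rho>0$), and note the geometric summation gives $|2^{-j}a_{2^j}-\log\rho|\le C'2^{-j/2}$ for an absolute constant $C'$. Then, from $|a_{2^jN}-Na_{2^j}|\le NC\,2^{j/2}$ we get $2^{-j}N^{-1}a_{2^jN}\to\log\rho$ as $j\to\infty$ for each fixed $N$, while from the first display (with $m=2^j$) we get $|2^{-j}N^{-1}a_{2^jN}-N^{-1}a_N|\le C/\sqrt{N}$ uniformly in $j$. Letting $j\to\infty$ in the latter yields
\begin{equation*}
\left|\frac{a_N}{N}-\log\rho\right|\le\frac{C}{\sqrt{N}},\qquad\text{that is,}\qquad |\log\lambda_N-N\log\rho|\le C\sqrt{N},
\end{equation*}
which is precisely $\lambda_N=\rho^{N+O(\sqrt{N})}$ (reading the exponent error, if $\rho\ne 1$, as the bound above divided by $\log\rho$).

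There is no genuine obstacle here: the argument is entirely elementary. The only points needing a little care are that every application of the near-multiplicativity bound is to indices $\ge 1$, which is automatic since all indices in sight are of the form $mN$, $2^j$, or $2^jN$ with $m,N\ge 1$; and that the accumulated error from the dyadic telescoping is controlled by a convergent geometric series $\sum_j 2^{-j/2}$, so that it stays $O(\sqrt{N})$ rather than degrading to $O(\sqrt{N}\log N)$. (An alternative to introducing the dyadic subsequence is to expand $N$ in binary and add the dyadic blocks one at a time, again summing a geometric series of errors of size $C\,2^{i/2}$; the route above is marginally cleaner.)
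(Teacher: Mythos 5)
Your proof is correct. Both you and the paper begin the same way: pass to logarithms (implicitly in the paper, explicitly for you), work along the dyadic subsequence to extract the exponent $\log\rho$ as the limit of $2^{-j}a_{2^j}$ via a geometric-series/Cauchy argument, with error $O(2^{-j/2})$. Where the two proofs genuinely diverge is in how they extend from powers of $2$ to general $n$. The paper runs a dyadic induction: it fixes constants $C_1,C_2$ controlling the dyadic and one-step errors, carefully selects a third constant $C_3$ satisfying $(C_1+C_2)^2+(C_1+C_2)C_3\le C_3^2$, and then verifies $\lambda_n\le\rho^n e^{C_3\sqrt n}$ (and the matching lower bound) by writing $n=2^k+n_k$ and closing the induction. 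You instead exploit the crucial asymmetry of the hypothesis --- the error depends only on the \emph{added} index $k$ --- to get the uniform chain bound $|a_{mN}-m\,a_N|\le(m-1)C\sqrt{N}$, then divide by $mN$ to obtain an error $C/\sqrt N$ \emph{independent of $m$}, and finally send the dyadic factor to infinity. This sidesteps the paper's bookkeeping with $C_3$ entirely and yields the explicit constant $C$ in $|\log\lambda_N - N\log\rho|\le C\sqrt N$ directly from the hypothesis. Both arguments are elementary Fekete-style iterations; yours is arguably tidier because it replaces a finely tuned induction with a uniform-in-$j$ estimate followed by a limit, at no cost in generality or in the quality of the error term. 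Your remark about reading $\rho^{n+O(\sqrt n)}$ when $\rho=1$ is a fair pedantic aside; the paper's own proof produces the estimate in the form $\rho^n e^{O(\sqrt n)}$, which is the intended reading.
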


\begin{proof}
We introduce the sequence $(a_n)_{n \geq 0}$ such that $\lambda_{2^{n+1}} = \left( \lambda_{2^{n}} \right)^2 e^{a_n}$. By iterating, we get 
$$
\lambda_{2^{n+1}} =  \left( \lambda_{2^n}\right)^2 e^{a_{n}}  =  \left( \lambda_{2^{n-1}} \right)^4 e^{2 a_{n-1} + a_n} = \dots = \lambda_1^{2^{n+1}} e^{2^n a_0 + 2^{n-1} a _1 + \dots + 2 a_{n-1} + a_n}.
$$
The condition \eqref{eq:WeakMulAs} gives that the sequence $\left(2^{-n/2} a_n  \right)_{n \geq 0}$ is bounded, therefore the series $\sum_{k \geq 0} \frac{a_k}{2^{k}}$ converges and $| \sum_{k \geq n} \frac{a_k}{2^k} | \leq  2 \  (\sup_{k \geq 0} 2^{-k/2} |a_k| )\ 2^{-n/2}$. In particular there exists $\rho > 0$ such that
$$
\lambda_{2^{n+1}} = e^{ 2^{n+1} \left(\log \lambda_1 + \frac{1}{2} \sum_{k=0}^n  \frac{a_k}{2^k} \right)} = e^{2^{n+1} \left( \log \lambda_1 + \frac{1}{2} \sum_{k = 0}^{\infty} \frac{a_k}{2^k} \right)} e^{ - 2^{n} \sum_{k \geq n+1} \frac{a_k}{2^k}} = \rho^{2^n} e^{O(2^{n/2})}.
$$
Now that we have the existence of an exponent, we prove the upper bound of Lemma \ref{Lem:RealAnalysis}. There exist $C_1, C_2 > 0$ such that we have the following upper bounds:
\begin{align}
\label{eq:IneDya}
& \lambda_{2^k} \leq \rho^{2^k} e^{C_1 2^{k/2}}, \\
\label{eq:RightMul}
& \lambda_{n+k} \leq \lambda_n \lambda_k e^{C_2 \sqrt{k}}.
\end{align}
Take $C_3$ large enough so that 
$
(C_1 + C_2)^2 + (C_1 + C_2)C_3 \leq C_3^2$ and $\lambda_1 \leq \rho e^{C_3}$.
We want to prove by induction that for all $n \geq 1$, $\lambda_n \leq \rho^n e^{C_3 \sqrt{n}}$. The assumption on $C_3$ implies that this holds for $n =1$. By induction (in a dyadic fashion), take $n \in [2^k, 2^{k+1})$. We decompose $n$ as $n = 2^k + n_k$ with $n_k \in [0,2^k)$. We have, by using \eqref{eq:RightMul}, \eqref{eq:IneDya} and the induction hypothesis,
$$
\lambda_n \leq \lambda_{2^k} \lambda_{n_k}  e^{C_2 2^{k/2}}  \leq ( \rho^{2^k} e^{C_1 2^{k/2}}  ) (\rho^{n_k} e^{C_3 \sqrt{n_k}}) e^{C_2 2^{k/2}}   = \rho^{n} e^{(C_1 + C_2) 2^{k/2} + C_3 \sqrt{n_k}} \leq \rho^n e^{C_3 \sqrt{n}},
$$
since by the assumption on $C_3$ we have
$$
\left( (C_1 + C_2) 2^{k/2} + C_3 \sqrt{n_k} \right)^2 = (C_1 + C_2)^2 2^k + (C_1 + C_2) C_3 2^{k/2} \sqrt{n_k}+ C_3^2 n_k  \leq C_3^2 (2^k + n_k) = C_3^2 n.
$$
The proof of the lower bound is similar.\end{proof}

In the next proposition we prove that the characteristic length $\lambda_n$ satisfies the weak multiplicativity property \eqref{eq:WeakMulAs} and we identify the exponent by using the results of \cite{DG18}.
\begin{Prop}
\label{Prop:WeakMul}
For $\xi$ satisfying Condition (T), there exists $C > 0$ such that for all $ n \geq 1$, $k \geq 1$ we have
\begin{equation}
\label{eq:WeakMul}
e^{-C \sqrt{k}} \lambda_n \lambda_k \leq \lambda_{n+k} \leq e^{C \sqrt{k}} \lambda_n \lambda_k.
\end{equation}
Furthermore, when $\gamma \in (0,2)$ and $\xi = \gamma/d_{\gamma}$, we have 
\begin{equation}
\label{eq:ExpoError}
\lambda_{n} = 2^{-n(1-\xi Q) + O(\sqrt{n})}.
\end{equation}
\end{Prop}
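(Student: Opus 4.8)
The plan is to establish the weak multiplicativity \eqref{eq:WeakMul} in both directions, feed it into Lemma \ref{Lem:RealAnalysis} to obtain a $\rho>0$ with $\lambda_n=\rho^{\,n+O(\sqrt n)}$, and finally identify $\log_2\rho=\xi Q-1$ using the quantile bounds imported from \cite{DG18}. Throughout I use the decomposition $\phi_{0,n+k}=\phi_{0,k}+\phi_{k,n+k}$, the independence of these two fields, the scaling $\phi_{k,n+k}(2^{-k}\,\cdot)\overset{(d)}{=}\phi_{0,n}$, and the uniform comparability $\ell_n(\phi,p)\asymp\bar\ell_n(\phi,p)\asymp\lambda_n$ recorded just above (itself a consequence of Theorem \ref{thm:AssTthm}).

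\textbf{Super-multiplicativity ($\lambda_{n+k}\geq e^{-C\sqrt k}\lambda_n\lambda_k$).} This is the optimized form of Step~3 of the proof of the upper-tail estimate \eqref{eq:UpperTailsPhi}. Coarse-grain a left-right geodesic for $\phi_{0,n+k}$ at scale $k$: on each block $P$ of $\pi_{n+k}^{k}(\phi)$ the geodesic makes at least two disjoint thin crossings of the rectangles $R_i(P)$ of size $2^{-k}(1,3)$ surrounding $P$, so, bounding $\phi_{0,k}$ on $\bar P$ by $\phi_{0,k}(P)\pm\osc_{\bar P}(\phi_{0,k})$,
\[
L_{1,1}^{(n+k)}(\phi)\;\geq\; c\,e^{-\xi\max_P\osc_{\bar P}(\phi_{0,k})}\,\Big(\min_{P,\,i}L^{(k,n+k)}(R_i(P),\phi)\Big)\sum_{P\in\pi_{n+k}^{k}(\phi)}e^{\xi\phi_{0,k}(P)}.
\]
The fine crossings $L^{(k,n+k)}(R_i(P),\phi)$ depend only on $\phi_{k,n+k}$ (hence are independent of $\phi_{0,k}$) and each is distributed as $2^{-k}$ times a scale-$n$ rectangle crossing, so by the lower-tail estimate \eqref{eq:LowerTailsPhi} and a union bound over the $O(4^k)$ rectangles, $\min_{P,i}L^{(k,n+k)}(R_i(P),\phi)\geq 2^{-k}e^{-C\sqrt k}\ell_n(\phi,p)$ with probability $\geq 3/4$; while the concatenation-of-straight-segments argument behind \eqref{eq:CoarseToPath} gives $\sum_{P\in\pi_{n+k}^{k}(\phi)}e^{\xi\phi_{0,k}(P)}\geq e^{-\xi\osc}\,2^{k}L_{1,1}^{(k)}(\phi)$. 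Intersecting these events (the first measurable w.r.t.\ $\phi_{k,n+k}$, the rest w.r.t.\ $\phi_{0,k}$, with the oscillation factors controlled via \eqref{eq:OscBoundExp}) has probability bounded below, and on it $L_{1,1}^{(n+k)}(\phi)\geq c\,e^{-C\sqrt k}\ell_n(\phi,p)\,L_{1,1}^{(k)}(\phi)$; taking fixed quantiles and using the uniform comparability of characteristic lengths yields $\lambda_{n+k}\geq e^{-C\sqrt k}\lambda_n\lambda_k$.

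\textbf{Sub-multiplicativity ($\lambda_{n+k}\leq e^{C\sqrt k}\lambda_n\lambda_k$).} This is the dual direction, carried out by the percolation argument already used for the upper-tail estimate \eqref{eq:UpperTailsPhi}. In the tiling of $[0,1]^2$ by $2^{-k}$-blocks, declare $P$ \emph{good} if the four rectangles of size $2^{-k}(1,3)$ and $(3,1)$ surrounding $P$ have, for the fine field $\phi_{k,n+k}$ alone (rescaled, distributed as $2^{-k}$ times the corresponding scale-$n$ crossing), length at most $C2^{-k}\bar\ell_n(\phi,p)$; for $p$ small this holds with probability close to $1$, and after passing to $\psi$ via Proposition \ref{Prop:ComparisonFields} these events have finite range of dependence, so the good blocks are highly supercritical. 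One then builds a scale-$(n+k)$ left-right crossing of $[0,1]^2$ by following a near-optimal path for the coarse field $\phi_{0,k}$, detouring around the exponentially small clusters of bad blocks through good ones: the coarse field contributes $\sum_{P\text{ on the path}}e^{\xi\phi_{0,k}(P)}\leq 2^{k}L_{1,1}^{(k)}(\phi)\,e^{C\sqrt k}$, each traversed block contributes a fine crossing of length $\leq C2^{-k}\bar\ell_n(\phi,p)$, and the oscillations contribute a further $e^{C\sqrt k}$ via \eqref{eq:OscBoundExp}; using also the a priori quantile-ratio bound of Lemma \ref{Lem:Apriori} on the detour scales, this gives $\ell_{n+k}(\phi,c)\leq C\,\bar\ell_n(\phi,p)\bar\ell_k(\phi,p)\,e^{C\sqrt k}$, hence $\lambda_{n+k}\leq e^{C\sqrt k}\lambda_n\lambda_k$. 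I expect this direction to be the main obstacle: the delicate points are to guarantee that the detours around bad clusters are simultaneously short and of weight comparable to the coarse path, and to arrange that the near-geodesic of $\phi_{0,k}$ traverses each visited block substantially, so that $\sum_{P}e^{\xi\phi_{0,k}(P)}$ is genuinely governed by $2^{k}L_{1,1}^{(k)}(\phi)$ rather than by a pointwise maximum of the field.

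\textbf{Identification of the exponent.} Lemma \ref{Lem:RealAnalysis} applied to \eqref{eq:WeakMul} produces $\rho>0$ with $\lambda_n=\rho^{\,n+O(\sqrt n)}$, i.e.\ $\log_2\lambda_n=n\log_2\rho+O(\sqrt n)$. On the other hand, Proposition~3.17 of \cite{DG18} for circle-average LFPP together with the comparison between $\phi_\delta$ and the circle-average process from Proposition~3.3 of \cite{DGo18} gives, for every fixed $p$ and $\eps\in(0,Q-2)$ and all $n$ large, the bound \eqref{eq:DGlowerBound}, $\ell_n(\phi,p)\geq 2^{-n(1-\xi Q+\xi\eps)}$, and the matching high-quantile bound $\bar\ell_n(\phi,p)\leq 2^{-n(1-\xi Q-\xi\eps)}$. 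Since $\lambda_n$ is comparable to both $\ell_n(\phi,p)$ and $\bar\ell_n(\phi,p)$ up to a fixed constant, $\log_2\lambda_n=-n(1-\xi Q)+o(n)$; comparing with the expansion from Lemma \ref{Lem:RealAnalysis} forces $\log_2\rho=\xi Q-1$, i.e.\ $\rho=2^{-(1-\xi Q)}$, and therefore $\lambda_n=2^{-n(1-\xi Q)+O(\sqrt n)}$, which is \eqref{eq:ExpoError}.
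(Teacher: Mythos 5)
Your super-multiplicativity argument and the identification of the exponent via Lemma~\ref{Lem:RealAnalysis} and the quantile bounds from \cite{DG18} match the paper. The sub-multiplicativity direction is where you diverge, and where there is a genuine gap.

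You propose a percolation construction: declare blocks good or bad according to the fine-scale crossing lengths, follow a near-geodesic for $\phi_{0,k}$ and detour around the bad clusters. The difficulty you flag yourself is the fatal one. The only mechanism in this circle of ideas for bounding $\sum_{P\text{ on the path}} 2^{-k}e^{\xi\phi_{0,k}(P)}$ by $L_{1,1}^{(k)}(\phi)$ (up to oscillation) is that each block visited by the coarse geodesic $\pi_k$ forces a short crossing of one of the surrounding thin rectangles, so its contribution to the integral $\int_{\pi_k}e^{\xi\phi_{0,k}}ds$ is at least $2^{-k}e^{\xi\phi_{0,k}(P)}e^{-\xi\osc}$. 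That bound is specific to the geodesic's own occupation measure. Once you detour, the blocks you pass through are no longer blocks visited by $\pi_k$, and no such inequality is available for them; the detour blocks could carry much larger values of $\phi_{0,k}$ than any block on the geodesic. The a priori quantile-ratio bound of Lemma~\ref{Lem:Apriori} controls fluctuations of crossing \emph{lengths}, not the spatial distribution of the coarse field along an arbitrary path, so it does not rescue this step.

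The paper avoids the issue altogether by not building an event at all. It fixes the geodesic $\pi_k$ for the coarse field, and for \emph{every} visited block $P$ (good or bad alike) attaches the concatenation $S^{(k,n+k)}(P)$ of four rectangle geodesics for the independent fine field $\phi_{k,n+k}$; the union $\Gamma_{k,n}=\cup_P S^{(k,n+k)}(P)$ is topologically a left-right crossing of $[0,1]^2$ regardless of how the fine crossings behave. It then bounds $\E(L_{1,1}^{(n+k)})\leq\E(L^{(n+k)}(\Gamma_{k,n}))$ using the independence of $\lbrace P:\,P\cap\pi_k\neq\varnothing\rbrace$ from $\phi_{k,n+k}$, factors the expectation, controls $\sum_P 2^{-k}e^{\xi\phi_{0,k}(P)}e^{\xi\osc_{\bar P}}$ by $9e^{2\xi\osc}L_{1,1}^{(k)}$ via the geodesic occupation bound described above, and finishes with Cauchy--Schwarz and the gradient and tail estimates. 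This gives the desired inequality in expectation, and the uniform quantile-ratio bounds convert it to a statement about $\lambda_{n+k}$. No percolation argument is needed here (the paper only uses the $\psi$-percolation machinery for the upper-tail estimate \eqref{eq:UpperTailsPhi}, a different statement). I would recommend replacing your detour construction with this direct concatenation-in-expectation argument.
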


\begin{proof}
Let us assume first that \eqref{eq:WeakMul} holds. Then, by using Lemma \ref{Lem:RealAnalysis}, there exists $\rho > 0$ such that we have $\lambda_n = \rho^{n + O(\sqrt{n})}$. Similarly to \eqref{eq:DGlowerBound}, for each fixed small $\delta >0$, for $k$ large enough we have,
\begin{equation}
\label{eq:DGupperQuantile}
\lambda_k \leq 2^{-k(1-\xi Q-\delta)}.
\end{equation}
The proof of \eqref{eq:DGupperQuantile} follows the same lines as the one of \eqref{eq:DGlowerBound}. Combining \eqref{eq:DGupperQuantile} and \eqref{eq:DGlowerBound} we get $\rho = 2^{-(1-\xi Q)}$. Now, we prove that the characteristic length satisfies \eqref{eq:WeakMul}.

\textit{Step 1:} Weak submultiplicativity. Let $\pi_k$ be such that $L^{(k)}(\pi_k) = L_{1,1}^{(k)}$. If $P \in \mathcal{P}_k$ is visited by $\pi_k$, consider the  concatenation $S^{(k,n+k)}(P)$ of four geodesics for $e^{\xi \phi_{k,n+k}} ds$ associated to the rectangles of size $2^{-k} (3,1)$ surrounding $P$.  Each geodesic is in the long direction of its rectangle so that this concatenation is a circuit. By scaling, $\E (L^{(k,n+k)}(S^{(k,n+k)}(P))) = 2^{-k+2} \E(L_{3,1}^{(n)})$. Note that 
the collection $\pi_k^k(\phi) = \lbrace P \in \mc{P}_k : P \cap \pi_k \neq \varnothing \rbrace$ is measurable with respect to $\phi_{0,k}$, which is independent of $\phi_{k,n+k}$. Set $\Gamma_{k,n} := \bigcup_{P\in \pi_k^k(\phi)} S^{(k,n+k)}(P)$. Note that $\Gamma_{k,n}$ contains a left-right crossing of $[0,1]^2$ whose length is bounded above by
\begin{equation*}
L^{(n+k)}(\Gamma_{k,n})  = \sum_{P\in \pi_k^k(\phi)} L^{(n+k)}(S^{(k,n+k)}(P))
  \leq  \sum_{P\in \pi_k^k(\phi) } L^{(k,n+k)}(S^{(k,n+k)}(P)) e^{\xi \phi_{0,k}(P)} e^{\xi \osc_{\hat{P}}(\phi_{0,k} )},
\end{equation*}
where $\hat{P}$ denotes the box containing $P$ at its center whose side length is three times that of $P$. Since $L_{1,1}^{(n+k)} \leq L^{(n+k)}(\Gamma_{k,n})$, by independence we have
\begin{align*}
\E (L_{1,1}^{(n+k)}) & \leq 4 \E(L_{3,1}^{(n)}) \E \left(\sum_{P\in\pi_k^k(\phi)} 2^{-k} e^{\xi \phi_{0,k} (P)}  e^{\xi \osc_{\hat{P}}(\phi_{0,k} )} \right).
\end{align*}
If $P$ is visited, then one of the four rectangles of size $2^{-k}(1,3)$ in $\hat{P}$ surrounding $P$ contains a short crossing, denoted by $\tilde{\pi}_k(P)$ and we have
$$
\int_{\pi_k} e^{\xi \phi_{0,k}} 1_{\pi_k \cap \hat{P}} ds \geq L^{(k)}(\tilde{\pi}_k(P)) \geq 2^{-k} e^{\xi \inf_{\hat{P}} \phi_{0,k}} \geq 2^{-k} e^{\xi \phi_{0,k}(P)} e^{- \xi  \osc_{\hat{P}}( \phi_{0,k})},
$$
hence 
\begin{align*}
\sum_{P\in \pi_k^k(\phi)} 2^{-k} e^{\xi \phi_{0,k} (P)}  e^{\xi \osc_{\hat{P}}(\phi_{0,k} )} & \leq \sum_{P\in \pi_k^k(\phi)}  e^{2\xi  \osc_{\hat{P}}( \phi_{0,k})} \int_{\pi_k} e^{\xi \phi_{0,k}} 1_{\pi_k \cap \hat{P}} ds.
\end{align*}
Taking the supremum of the oscillation over all blocks, 
$$
\sum_{P\in\pi_k^k(\phi)}  e^{2\xi  \osc_{\hat{P}}( \phi_{0,k})} \int_{\pi_k} e^{\xi \phi_{0,k}} 1_{\pi_k \cap \hat{P}} ds \leq 9 e^{2 \xi 2^{-k}  \norme{\nabla \phi_{0,k}}_{[0,1]^2}} L_{1,1}^{(k)}.
 $$
 Altogether, by Cauchy-Schwarz, 
$$
\E (L_{1,1}^{(n+k)})  \leq 36 \E(L_{3,1}^{(n)}) \E((L_{1,1}^{(k)})^2)^{1/2}  \E(e^{4 \xi  2^{-k}  \norme{\nabla \phi_{0,k}}_{[0,1]^2}})^{1/2}.
$$ 
When $\xi$ satisfies Condition (T), by using the uniform bounds for quantile ratios together with the upper tail estimates \eqref{eq:UpperTailsPhi} and the gradient estimate \eqref{eq:OscBoundExp} we get $\lambda_{n+k} \leq e^{C \sqrt{k}} \lambda_n \lambda_k $.

\textit{Step 2:} Weak supermultiplicativity. We argue here that 
\begin{equation}
\label{eq:WeakSuperMulQuantile}
\lambda_{n+k} \geq e^{-C\sqrt{k}} \lambda_{n} \lambda_k.
\end{equation}
Using a slightly easier argument than \eqref{eq:Denum} (since we just have the field $\phi$ here), we have
$$
L_{1,1}^{(n+k)} \geq e^{- \xi \max_{P \in \mc{P}_k^1} \osc_{\hat{P}} (\phi_{0,k})} \left( \min_{P \in \mc{P}_k^1, 1\leq i \leq 4} L^{(k,k+n)}(R_i^S(P)) \right) \sum_{P \in \pi_{n+k}^k} e^{\xi \phi_{0,k}(P)},
$$
where $\pi_{n+k}^k$ denotes the $k$-coarse grained approximation of $\pi_{n+k}$,  the left-right geodesic of $[0,1]^2$ for the field $\phi_{0,n+k}$, and where we recall that $(R_{i}^S(P))_{1 \leq i \leq 4}$ denote the four rectangles of size $2^{-k}(1,3)$ surrounding $P$. Furthermore, by using a similar argument to \eqref{eq:CoarseToPath}, we have
$$
\sum_{P \in \pi_{n+k}^k} e^{\xi \phi_{0,k}(P)} \geq e^{-\xi \max_{P \in \mc{P}_k} \osc_P (\phi_{0,k})} 2^k L_{1,1}^{(k)}.
$$
Altogether, we get the following weak supermultiplicativity, 
\begin{equation}
\label{eq:WeakSuperMul}
L_{1,1}^{(n+k)} \geq L_{1,1}^{(k)}  \left(  \min_{P \in \mc{P}_k^1, 1\leq i \leq 4} 2^k L^{(k,k+n)}(R_i^S(P))\right)  e^{-2 \xi \max_{P \in \mc{P}_k} \osc_{P}(\phi_{0,k})}
\end{equation}
When $\xi$ satisfies Condition (T), by scaling and the tail estimates \eqref{eq:LowerTailsPhi},, $\Pro(  \min_{P \in \mc{P}_k^1, 1\leq i \leq 4} 2^k L^{(k,k+n)}(R_i^S(P)) \geq \lambda_{n} e^{-C \sqrt{k}}) \geq 1 - e^{-ck}$. Furthermore, using the gradient estimates \eqref{eq:OscBoundTail}, we get $\Pro(2^{-k} \norme{ \con \phi_{0,k} }_{[0,1]^2} \geq C \sqrt{k}) \geq 1 - e^{-ck}$ for $C$ large enough. Therefore, with probability $\geq 1/2$, $L_{1,1}^{(n)} \leq e^{-C \sqrt{k}} \lambda_{n} \lambda_k$ hence the bound $\lambda_{n+k} \geq e^{-C \sqrt{k}} \lambda_k \lambda_{n}$.
\end{proof}

\subsection{Tightness of the log of the diameter}
\label{Sec:TightDiam}

\begin{Prop}
\label{pro:diam}
If $\gamma \in (0,2)$ and $\xi = \gamma/d_{\gamma}$ then $\left( \log \mathrm{Diam}\left([0,1]^2, \lambda_{n}^{-1} e^{\xi \phi_{0,n}} ds \right) \right)_{n \geq 0}$ is tight.
\end{Prop}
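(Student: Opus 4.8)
The plan is to prove tightness of $(\log D_n-\log\lambda_n)_{n\ge 0}$, where $D_n:=\mathrm{Diam}([0,1]^2,e^{\xi\phi_{0,n}}ds)$, by controlling it from both sides. The lower bound is immediate: since $e^{\xi\phi_{0,n}}ds$ is a length metric, $D_n$ is at least the distance between the left and right sides of $[0,1]^2$, so $D_n\ge L_{1,1}^{(n)}(\phi)$ and hence $\log D_n-\log\lambda_n\ge \log L_{1,1}^{(n)}(\phi)-\log\lambda_n$, which is tight by Theorem \ref{thm:AssTthm} together with the comparability of all the characteristic lengths to $\lambda_n$. It then remains to bound $D_n$ from above; I will show $\E[D_n]\le C\lambda_n$ uniformly in $n$, after which Markov's inequality yields tightness from above.

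For the upper bound I would first establish the deterministic multiscale chaining inequality
\[
D_n\ \le\ C\sum_{j=0}^{n}\ \max_{P\in\mc{P}_j}\ L^{(n)}\!\big(R_j(P)\big),
\]
where $R_j(P)$ is a rectangle of fixed aspect ratio and side-length $\asymp 2^{-j}$ around $P$, and $L^{(n)}(R_j(P))$ is its crossing distance for $e^{\xi\phi_{0,n}}ds$. This is the same type of geometric construction used in \cite{DF18,DD16}: one connects an arbitrary point $x$ to the centre of $[0,1]^2$ through its nested dyadic boxes $Q_0^x\supset\cdots\supset Q_n^x\ni x$, routing each scale-$j$ step through only $O(1)$ boxes, and treating the innermost scale with the gradient bound \eqref{eq:OscBoundTail}. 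The crucial point in phrasing the bound with \emph{crossing} distances of the $R_j(P)$, rather than with diameters of sub-boxes, is that crossing distances are exactly the quantities whose upper tails are controlled by \eqref{eq:UpperTailsPhi}; a recursion directly on sub-box diameters would not close.

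Next I would estimate $\E[\max_{P\in\mc{P}_j}L^{(n)}(R_j(P))]$. Writing $\phi_{0,n}=\phi_{0,j}+\phi_{j,n}$ and using that on $R_j(P)$ the coarse field $\phi_{0,j}$ is constant up to $\osc_{R_j(P)}(\phi_{0,j})\le C2^{-j}\norme{\con\phi_{0,j}}_{[0,1]^2}$,
\[
\max_{P\in\mc{P}_j}L^{(n)}(R_j(P))\ \le\ \Big(\max_{P\in\mc{P}_j}e^{\xi\phi_{0,j}(P)}e^{\xi\osc_{R_j(P)}(\phi_{0,j})}\Big)\cdot\max_{P\in\mc{P}_j}L^{(j,n)}(R_j(P)),
\]
and the two factors are independent. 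By Hölder, the maximum bound \eqref{eq:MaxBoundExp} (with exponent $r\xi<2$ for $r>1$ close to $1$, which is fine since $\xi<2$) and the gradient bound \eqref{eq:OscBoundExp}, the first factor has expectation $\le 4^{\xi j}e^{O(\sqrt j)}$. For the second, the scaling property of $\phi$ shows that each $L^{(j,n)}(R_j(P))$ is equal in law to $2^{-j}$ times a crossing distance of a fixed rectangle for $e^{\xi\phi_{0,n-j}}ds$, which by the RSW estimates is $\le C2^{-j}L_{3,1}^{(n-j)}(\phi)$ in law; its upper tail is \eqref{eq:UpperTailsPhi}, where $\Lambda_{n-j}(\phi,p)\le\Lambda_\infty(\phi,p)<\infty$ by Theorem \ref{thm:AssTthm} and $\ell_{n-j}(\phi,p)\asymp\lambda_{n-j}$. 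A union bound over the $\asymp 4^j$ boxes and integration of the $e^{-cs^2/\log s}$ tail give $\E[\max_{P}L^{(j,n)}(R_j(P))]\le C2^{-j}\lambda_{n-j}e^{O(\sqrt{j\log j})}$. Combining the two factors, and then using weak multiplicativity \eqref{eq:WeakMul} in the form $\lambda_{n-j}\le e^{C\sqrt j}\lambda_n/\lambda_j$ together with the lower bound $\lambda_j\ge 2^{-j(1-\xi Q)}e^{-C\sqrt j}$ from \eqref{eq:ExpoError}, I obtain
\[
\frac{\E\big[\max_{P\in\mc{P}_j}L^{(n)}(R_j(P))\big]}{\lambda_n}\ \le\ C\,4^{\xi j}\,2^{-j}\,2^{j(1-\xi Q)}\,e^{O(\sqrt{j\log j})}\ =\ C\,2^{-\xi(Q-2)j}\,e^{O(\sqrt{j\log j})}.
\]
This is where subcriticality enters: for $\gamma\in(0,2)$ one has $Q=\tfrac2\gamma+\tfrac\gamma2>2$, so $2^{-\xi(Q-2)j}$ decays geometrically and dominates the subexponential factor $e^{O(\sqrt{j\log j})}$. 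Summing over $j$ gives $\E[D_n]\le C\lambda_n\sum_{j\ge 0}2^{-\xi(Q-2)j}e^{O(\sqrt{j\log j})}\le C'\lambda_n$ uniformly in $n$, which completes the upper bound and hence the proof.

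The step I expect to be the main obstacle is the chaining inequality, together with the structural insight that $D_n$ must be dominated by a sum of \emph{crossing} distances of sub-boxes at every scale — so that \eqref{eq:UpperTailsPhi}, the quantile-ratio bound and the exponent identification \eqref{eq:ExpoError} can all be brought to bear — rather than by a recursion on sub-box diameters. The geometric bookkeeping (routing with $O(1)$ boxes per scale, replacing each $P$ by a slightly larger rectangle $R_j(P)$, and handling the finest scale separately) is routine but must be carried out carefully, and the whole argument hinges on the summability of $2^{-j}\cdot 4^{\xi j}\cdot\lambda_{n-j}/\lambda_n\asymp 2^{-\xi(Q-2)j}$, which holds precisely because $Q>2$ when $\gamma<2$.
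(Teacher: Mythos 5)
Your proposal follows essentially the same route as the paper: the same chaining decomposition of the diameter into a sum over scales of maxima of rectangle crossing lengths, the same decoupling of the coarse field $\phi_{0,j}$ (controlled by the supremum and oscillation bounds) from the fine crossing lengths $L^{(j,n)}$ (controlled by scaling, RSW, the uniform quantile‑ratio bound, and the upper‑tail estimate), the same use of weak multiplicativity and the exponent identification to reduce the sum to $\sum_j 2^{-\xi(Q-2)j}e^{O(\sqrt{j\log j})}$, and the same observation that $Q>2$ for $\gamma<2$ makes this converge; the lower bound via $D_n\ge L_{1,1}^{(n)}$ is also how the paper handles the left tail.
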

\begin{proof} 
\textit{Step 1:} Chaining. By a standard chaining argument, (see (6.1) in \cite{DF18} for more details), we have  
\begin{equation}
\label{eq:Chaining}
\mathrm{Diam} \left( [0,1]^2 , e^{\xi \phi_{0,n}} ds^2 \right) \leq C \sum_{k=0}^n  \underset{P \in \mc{C}_k}{\max} \ L^{(n)}(P) + C \times 2^{-n} e^{\xi \sup_{[0,1]^2} \phi_{0,n}},
\end{equation}
where $\mc{C}_k$ is a collection of no more than $C 4^k$ long rectangles of side length $2^{-k}(3,1)$. 

Using the bound for the maximum \eqref{eq:MaxBoundExp}, when $\xi < 2$, we have $\E (2^{-n} e^{\xi \sup_{[0,1]^2} \phi_{0,n}}) \leq 2^{-n} 2^{2 \xi n} e^{C \sqrt{n}}$.

Fix $ 0 \leq k \leq n$ and $P \in \mc{C}_k$.  We can bound $L^{(n)}(P)$ by taking a left-right geodesic $\pi_{k,n}$ for $\phi_{k,n}$. Therefore, 
$$
L^{(n)}(P) \leq L^{(n)}(\pi_{k,n}) \leq e^{\xi \max_{[0,1]^2} \phi_{0,k}} L^{(k,n)}(P),
$$
and consequently, 
\begin{equation}
\label{eq:Decouplage}
\max_{P \in \mc{C}_k} L^{(n)}(P)  \leq e^{\xi \max_{[0,1]^2} \phi_{0,k}} \max_{P \in \mc{C}_k} L^{(k,n)}(P).
\end{equation}

Using independence, the maximum bound \eqref{eq:MaxBoundExp},  scaling of the field $\phi$ and the tail estimates \eqref{eq:UpperTailsPhi}, we get
\begin{equation}
\label{eq:inequa}
\E \left(e^{\xi \max_{[0,1]^2} \phi_{0,k}} \max_{P \in \mc{C}_k} L^{(k,n)}(P)\right) \leq 2^{-k} 2^{2\xi k} e^{C \sqrt{k}} \lambda_{n-k} e^{C k^{\frac{1}{2} + \eps}}
\end{equation}
for some fixed small $\eps >0$ (again, the term $k^{\eps}$ could in fact be $\log k$). Taking the expectation in \eqref{eq:Chaining}, using \eqref{eq:Decouplage} and \eqref{eq:inequa}, we obtain the following bound for the expected value of the diameter,
\begin{equation}
\label{eq:SumObtained}
\E ( \mathrm{Diam} ( [0,1]^2 , e^{\xi \phi_{0,n}} ds ) )  \leq C \sum_{k=0}^n 2^{-k} 2^{2 \xi k} \lambda_{n-k} e^{C k^{\frac{1}{2} + \eps}}.
\end{equation}

\textit{Step 2:} Right tail. By Proposition \ref{Prop:WeakMul}, $\lambda_{n-k} \leq \lambda_n \frac{e^{C \sqrt{k}} }{\lambda_k} \leq \lambda_n 2^{k(1-\xi Q)} e^{C \sqrt{k}}$. Together with \eqref{eq:SumObtained}, this implies that
$$
\E ( \mathrm{Diam} ( [0,1]^2 , e^{\xi \phi_{0,n}} ds ) ) \leq C \sum_{k=0}^n 2^{-k} 2^{2 \xi k} \lambda_{n-k} e^{C k^{\frac{1}{2} + \eps}} 
\leq \lambda_n C \sum_{k=0}^{\infty}  2^{-k \xi(Q-2)} e^{C k^{\frac{1}{2} + \eps}}.
$$
Since $Q  > 2$, Markov's inequality gives $\Pro \left( \mathrm{Diam} ( [0,1]^2 , \lambda_{n}^{-1} e^{\xi \phi_{0,n}} ds ) \geq e^s \right) \leq C e^{-s}$.

\textit{Step 3:} Left tail. Finally, since the diameter of the square $[0,1]^2$ is larger than the left-right distance, by our tail estimates \eqref{eq:LowerTailsPhi}, we get $\Pro \left( \mathrm{Diam} ( [0,1]^2 , \lambda_{n}^{-1} e^{\xi \phi_{0,n}} ds )  \leq e^{-s}  \right) \leq \Pro \left( L_{1,1}^{(n)} \leq \lambda_n e^{-s} \right) \leq C e^{-cs^2}$.
\end{proof}

\subsection{Tightness of the metrics}

\label{Sec:TightMetric}

\begin{Prop}
\label{Prop:Holder}
If $\gamma \in (0,2)$ and $\xi = \gamma/d_{\gamma}$ then the sequence of metrics $\left( \lambda_n^{-1} e^{\xi \phi_{0,n}} ds \right)_{n \geq 0}$ is tight. Moreover, if we define 
$$
C_{\alpha}^{n} := \sup_{x,x' \in [0,1]^2} \frac{|x-x'|^{\alpha}}{d_{0,n}(x,x')} \quad \text{and}  \quad
C_{\beta}^{n} :=  \sup_{x,x' \in [0,1]^2} \frac{d_{0,n}(x,x')}{|x-x'|^{\beta}}
$$
then, for $\alpha > \xi(Q+2)$ and $\beta < \xi (Q-2)$, the sequence $(C_{\alpha}^n, C_{\beta}^n)_{n \geq 0}$ is tight.
\end{Prop}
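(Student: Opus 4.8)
The plan is to reduce the statement to two uniform estimates and then invoke a standard tightness criterion. First, once $(C_\beta^n)_{n\ge 0}$ is shown to be tight for $\beta<\xi(Q-2)$, the family of metrics $d_{0,n}:=\lambda_n^{-1}e^{\xi\phi_{0,n}}ds$ is stochastically equicontinuous on $([0,1]^2)^2$: indeed $|d_{0,n}(x,x')-d_{0,n}(y,y')|\le d_{0,n}(x,y)+d_{0,n}(x',y')\le 2C_\beta^n\,(|x-y|\vee|x'-y'|)^\beta$, so for any $\epsilon>0$, $\Pro(\sup_{|x-y|\vee|x'-y'|\le\eta}|d_{0,n}(x,x')-d_{0,n}(y,y')|>\epsilon)\le\Pro(C_\beta^n>\epsilon/(2\eta^\beta))$, which tends to $0$ as $\eta\to 0$ uniformly in $n$. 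Combined with the tightness of $\log\mathrm{Diam}([0,1]^2,\lambda_n^{-1}e^{\xi\phi_{0,n}}ds)$ from Proposition~\ref{pro:diam} — which bounds the supremum of $d_{0,n}$, hence controls its value at any fixed point — the Arzel\`a--Ascoli tightness criterion for random continuous functions yields tightness of the metrics. Tightness of $(C_\alpha^n)$ for $\alpha>\xi(Q+2)$ is the remaining part (and, as a statement that subsequential limits are genuine metrics, an essential one).

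\textbf{Upper bound on $C_\beta^n$.} I would rerun the chaining argument behind Proposition~\ref{pro:diam} while tracking the starting scale. For $x,x'$ with $2^{-k_0-1}\le|x-x'|\le 2^{-k_0}$ and $k_0\le n$, chaining from scale $k_0$ down to scale $n$ gives $d_{0,n}(x,x')\le C\lambda_n^{-1}\big(\sum_{k=k_0}^n\max_{P\in\mc{C}_k}L^{(n)}(P)+2^{-n}e^{\xi\sup_{[0,1]^2}\phi_{0,n}}\big)$ with $\mc{C}_k$ the same collections of $O(4^k)$ long rectangles as in \eqref{eq:Chaining}. Taking expectations and plugging in the bound \eqref{eq:inequa} (which already combines the decoupling $\phi_{0,n}=\phi_{0,k}+\phi_{k,n}$, the scaling of $\phi$, the maximum bound \eqref{eq:MaxBoundExp}, the oscillation bound \eqref{eq:OscBoundExp}, and the upper tail \eqref{eq:UpperTailsPhi}), together with $\lambda_{n-k}/\lambda_n\le e^{C\sqrt k}/\lambda_k\le 2^{k(1-\xi Q)}e^{C\sqrt k}$ (from the weak multiplicativity \eqref{eq:WeakMul} and the exponent \eqref{eq:ExpoError}), one gets $\E(\max_{P\in\mc{C}_k}L^{(n)}(P))/\lambda_n\le 2^{-k(\xi(Q-2)-\delta)}e^{Ck^{1/2+\eps}}$ for $\delta>0$ arbitrarily small. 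Since $\beta<\xi(Q-2)$ and $Q>2$ (as $\gamma<2$), the quantity $2^{k_0\beta}\sum_{k\ge k_0}2^{-k(\xi(Q-2)-\delta)}e^{Ck^{1/2+\eps}}$ is bounded uniformly in $k_0$ for $\delta$ small, and the $2^{-n}$-term contributes a quantity with expectation $O(2^{-n(\xi(Q-2)-\beta)}e^{O(\sqrt n)})$; together with the straight-line bound $d_{0,n}(x,x')\le\lambda_n^{-1}e^{\xi\sup_{[0,1]^2}\phi_{0,n}}|x-x'|$ for $|x-x'|<2^{-n}$ (using that $\xi(Q-2)\le 1$, so $\beta<1$), this yields $\sup_n\E(C_\beta^n)<\infty$, hence tightness of $C_\beta^n$ by Markov.

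\textbf{Lower bound and tightness of $C_\alpha^n$.} Fix $x,x'$, let $|x-x'|\asymp 2^{-k}$, and let $P\in\mc{P}_k$ be a dyadic block at scale $k$ containing $x$ with $x'\notin 3P$, where $3P$ is the concentric block of three times the side length. As in the ``easy-crossing'' observation, any path from $x$ to $x'$ induces a crossing, in the thin direction, of one of the four rectangles $R_i(P)$ of size $2^{-k}(1,3)$ tiling the frame $3P\setminus P$, so $d_{0,n}(x,x')\ge\lambda_n^{-1}\min_{1\le i\le 4}L^{(n)}(R_i(P))$. Decoupling gives $L^{(n)}(R_i(P))\ge e^{\xi\phi_{0,k}(P)}e^{-\xi\osc_{3P}(\phi_{0,k})}L^{(k,n)}(R_i(P))$ with $\phi_{0,k}(P)$ the value at the center of $P$, and by scaling $L^{(k,n)}(R_i(P))\overset{(d)}{=}2^{-k}L^{(n-k)}_{1,3}(\phi)$. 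I would then combine: the lower tail \eqref{eq:LowerTailsPhi}, union-bounded over the $O(4^k)$ rectangles and summed over $k$, to get $L^{(k,n)}(R_i(P))\ge 2^{-k}\lambda_{n-k}e^{-C\sqrt k}$; the maximum bound \eqref{eq:MaxBoundTail} applied to the $4^k$ block centers to get $\min_{P\in\mc{P}_k}\phi_{0,k}(P)\ge-(2+\delta)k\log 2$ for all $k$; and the oscillation bound \eqref{eq:OscBoundTail} to absorb the $\osc$ term into $e^{-C\sqrt k}$. This gives, with probability tending to $1$ uniformly in $n$ as a threshold parameter grows, for all such $x,x'$ simultaneously, $d_{0,n}(x,x')\ge 2^{-(2+\delta)\xi k}\,2^{-k}\,(\lambda_{n-k}/\lambda_n)\,e^{-C\sqrt k}$. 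Using $\lambda_{n-k}/\lambda_n\ge e^{-C\sqrt k}/\lambda_k\ge 2^{k(1-\xi Q-\delta)}e^{-C\sqrt k}$ (from \eqref{eq:WeakMul} and the upper-quantile bound \eqref{eq:DGupperQuantile}), this is $\ge 2^{-k(\xi(Q+2)+\delta')}e^{-C\sqrt k}$, so $|x-x'|^\alpha/d_{0,n}(x,x')\le 2^{-k(\alpha-\xi(Q+2)-\delta')}e^{C\sqrt k}$, bounded uniformly in $k$ once $\alpha>\xi(Q+2)$ and $\delta'$ is small; the regime $|x-x'|<2^{-n}$ is handled by $d_{0,n}(x,x')\ge\lambda_n^{-1}e^{\xi\inf_{[0,1]^2}\phi_{0,n}}|x-x'|$ together with \eqref{eq:MaxBoundTail}. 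Hence $C_\alpha^n$ is tight.

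\textbf{Main obstacle.} The delicate point is the lower bound. Two features are essential. First, because of the union bound over the $\sim 4^k$ blocks at each scale and over all scales, the coarse field must be evaluated only at the finitely many block centers rather than through infima over subboxes; this is what lets the maximum bound \eqref{eq:MaxBoundTail} deliver the \emph{sharp} prefactor $2^{-2\xi k}$ (up to $\delta$) — the oscillation estimates being there precisely to make the replacement of the field by its value at the center essentially lossless — and hence the sharp exponent $\xi(Q+2)$ rather than something cruder. Second, all estimates are relative to $\lambda_n$, and the ratios $\lambda_{n-k}/\lambda_n$ must be controlled with an error that is only $e^{O(\sqrt k)}$ and not $e^{O(\sqrt n)}$; this is exactly what the weak multiplicativity \eqref{eq:WeakMul} provides, while the identification $\lambda_n=2^{-n(1-\xi Q)+O(\sqrt n)}$ from \cite{DG18} is what pins the two H\"older exponents to $\xi(Q-2)$ and $\xi(Q+2)$.
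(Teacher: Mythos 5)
Your proposal is correct and follows essentially the same route as the paper: chaining from scale $k$ plus the diameter estimates for the upper H\"older bound, and for the lower bound, a union bound over scales $k$ combined with the observation that a path between $x,x'$ at separation $\asymp 2^{-k}$ must cross one of $O(4^k)$ thin rectangles, with the decomposition $\phi_{0,n}=\phi_{0,k}+\phi_{k,n}$, the tail estimates, and the weak multiplicativity/exponent identification $\lambda_n=2^{-n(1-\xi Q)+O(\sqrt n)}$. The only cosmetic difference is in the lower bound: where you evaluate the coarse field at block centers and absorb the discrepancy via the oscillation bound, the paper simply takes $e^{\xi\inf_{[0,1]^2}\phi_{0,k}}$ and feeds the global maximum bound \eqref{eq:MaxBoundTail} directly — which already gives the sharp $2^{-2\xi k}$ prefactor up to $\delta$, so the block-center/oscillation refinement you flag as essential in your ``main obstacle'' paragraph is not actually needed.
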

Henceforth, we use the notation $d_{0,n}$ for the renormalized metric $\lambda_n^{-1} e^{\xi \phi_{0,n}} ds$ restricted to $[0,1]^2$.
\begin{proof}

The proof has  two parts. In the first part we show the tightness of the metrics in the space of continuous function from $[0,1]^2 \times [0,1]^2 \to \R^{+}$ and in the second part we show that subsequential limits are metrics. 
A byproduct result of the argument is explicit bi-H\"older bounds. 

\textbf{Part 1.} Upper bound on the modulus of continuity. We suppose $\gamma \in (0,2)$. We start by proving that for every  $0 < \beta <  \xi (Q-2)$,  if $\eps > 0$, there exists a large $C_{\eps} > 0$ so that for every $n \geq 0$
\begin{equation}
\label{UpperHolder}
\Pro \left( \exists x,x' \in [0,1]^2 ~ : ~ d_{0,n}(x,x') \geq C_{\eps} |x-x'|^{\beta} \right) \leq \eps,
\end{equation} 
i.e. $\left( \norme{ d_{0,n} }_{C^{\beta}([0,1]^2 \times [0,1]^2)}\right)_{n \geq 0}$ is tight, where the $C^{\beta}$-norm is defined for $f : [0,1]^2 \times [0,1]^2 \to \R$ as
$$
\norme{f}_{C^{\beta}([0,1]^2 \times [0,1]^2)} := \norme{f}_{[0,1]^2 \times [0,1]^2} + \sup_{(x,y) \neq (x',y') \in [0,1]^2 \times [0,1]^2} \frac{ | f(x,y) - f(x',y')|}{|(x,y)-(x',y')|^{\beta}}.
$$

By a union bound it suffices to estimate $\Pro (  \exists x,x' : \  |x-x'|  <  2^{-n}  ,   d_{0,n}(x,x') \geq  e^{s} |x-x'|^{\beta} ) $ and
$$ \sum_{k=0}^{n} \Pro \left( \exists x,x' : 2^{-k} \leq |x-x'| \leq 2^{-k+1},  d_{0,n}(x,x') \geq e^{s} |x-x'|^{\beta}  \right).
$$

\textit{Step 1:} We start with the term $\Pro ( \exists x,x' :  2^{-k} \leq |x-x'| \leq 2^{-k+1},  d_{0,n}(x,x') \geq e^{s} |x-x'|^{\beta} )$.  We use the chaining argument \eqref{eq:Chaining} at scale $k$ which gives
$$
\sup_{2^{-k} \leq |x-x'| \leq 2^{-k+1}} d_{0,n}(x,x') \leq C \lambda_n^{-1} \sum_{i=k}^n  \underset{P \in \mc{C}_i}{\max }\ L^{(n)}(P) + C \lambda_n^{-1} \times 2^{-n} e^{\xi \underset{[0,1]^2}{\sup} \phi_{0,n}}.
$$
Taking the expected value and using the same bounds as those obtained in the proof of Proposition \ref{pro:diam}, we get
$$
\E \left( \sup_{2^{-k} \leq |x-x'| \leq 2^{-k+1}} d_{0,n}(x,x') \right) \leq  \sum_{i = k}^{n} 2^{-i \xi (Q-2)} e^{C i^{\frac{1}{2}+\eps}} \leq C 2^{-k \xi (Q - 2)} e^{C k^{\frac{1}{2} + \eps}}.
$$
Therefore, using Markov's inequality we get the bound
\begin{multline*}
\sum_{k=0}^{n} \Pro \left( \exists x,x' : 2^{-k} \leq |x-x'|  \leq 2^{-k+1},  d_{0,n}(x,x') \geq e^{s} |x-x'|^{\beta}  \right) \\
 \leq \sum_{k = 0}^n \Pro \left( \sup_{2^{-k} \leq |x-x'| \leq 2^{-k+1}} d_{0,n}(x,x')  \geq e^s 2^{-k \beta} \right) \leq e^{-s}  \sum_{k=0}^{n} 2^{k \beta} 2^{-k \xi (Q - 2)}.
\end{multline*}
The series is convergent since $\xi(Q-2) - \beta > 0$. 

\textit{Step 2:} We bound from above $\Pro (  \exists x,x' \  |x-x'|  <  2^{-n}  ,   d_{0,n}(x,x') \geq  e^{s} |x-x'|^{\beta} )$ using a bound on the supremum of the field. Indeed, for such $x$ and $x'$, note that
$$
e^{s} |x-x'|^{\beta} \leq d_{0,n}(x,x') \leq \lambda_n^{-1} e^{\xi \sup_{[0,1]^2} \phi_{0,n}} |x-x'|
$$
Writing $\beta = \xi(Q-2) -\eps \xi$ for some $\eps > 0$, it follows that $1-\beta = (1-\xi Q + 2\xi) + \eps \xi >0$ since the LFPP exponent $1-\xi Q \geq -2\xi$ by a simple  uniform bound. Therefore, $|x-x'|^{\beta-1} \geq 2^{n(1-\beta)}$ and $\lambda_n^{-1} 2^{n(1-\beta)} = 2^{n(2\xi + \eps \xi + o(1))}$. Altogether, this probability is bounded from above by $\Pro ( \sup_{[0,1]^2} \phi_{0,n} \geq n \log 4 + \eps n \log 2 + o(n) + \xi^{-1} s )$ and using \eqref{eq:MaxBoundTail} gives a uniform tail estimate.

Therefore, we obtain the tightness of $\left( d_{0,n} \right)_{n \geq 0}$ as a random element of $C([0,1]^2 \times [0,1]^2, \R^{+})$ and every subsequential limit is (by Skorohod's representation theorem) a pseudo-metric. 

\textbf{Part 2.} Lower bound on the modulus of continuity. We prove that if $\alpha>  \xi(Q+2)$ and  $\eps > 0$ then there exists a small constant $c_{\eps}>0$ such that for every $n \geq 0$,
\begin{equation}
\label{LowerHolder}
\Pro \left( \exists x,x' \in [0,1]^2 ~ : ~ d_{0,n}(x,x') \leq c_{\eps} |x-x'|^{\alpha} \right) \leq \eps.
\end{equation}
Similarly as before, by union bound it is enough to estimate the term 
\begin{equation}
\label{eq:FstPart}
\Pro(  \exists x,x' \in [0,1]^2 :  |x-x'|  <  2^{-n}  ,   d_{0,n}(x,x') \leq e^{-\xi s} |x-x'|^{\alpha}  )
\end{equation} 
and the term
\begin{equation}
\label{eq:SndPart}
\sum_{k=0}^{n} \Pro \left(\underbrace{ \exists x,x' : 2^{-k} \leq |x-x'| \leq 2^{-k+1},  d_{0,n}(x,x') \leq e^{-\xi s} |x-x'|^{\alpha}}_{:=E_{k,n,s}}  \right).
\end{equation}

\textit{Step 1:} We give an upper bound for \eqref{eq:SndPart}. Fix $ x,x' \in [0,1]^2$ such that $2^{-k} \leq |x-x'| \leq 2^{-k+1}$. Note that any path from $x$ to $x'$ crosses one of the rectangles in the collection $\lbrace R_i^S(P) : P \in \mc{P}_{k+2}^1, 1\leq i \leq 4 \rbrace$. Hence, under the event $E_{k,n,s}$, there exists $x, x'$ such that
\begin{equation}
\label{eq:BoundHolderLowR}
2^{-k \alpha} \geq d_{0,n}(x,x') \geq \lambda_n^{-1} 2^{-k} e^{\xi \inf_{[0,1]^2} \phi_{0,k}} \left(\min_{P \in \mc{P}_{k+2}^1, 1\leq i \leq 4}  2^k L^{(k,n)}(R_i^S(P))\right).
\end{equation}

Since $\alpha = \xi (Q+2) + \xi \delta$ for a small $\delta >0$, by using Proposition \ref{Prop:WeakMul} we get
\begin{equation}
\label{eq:BoundHolderLow}
2^{-k \alpha} \lambda_n 2^k \leq 2^{-k \alpha} \lambda_{k} \lambda_{n-k} e^{C \sqrt{k}} \leq 2^{-k(\alpha- \xi Q)}  \lambda_{n-k}  e^{C \sqrt{k}} = 2^{- k (2+\delta) \xi} (\lambda_{n-k} 2^{- \xi \delta k} e^{C \sqrt{k}})
\end{equation}
Now, using \eqref{eq:BoundHolderLowR}, \eqref{eq:BoundHolderLow} and scaling, we get
\begin{align*}
\Pro \left( E_{k,n,s} \right) & \leq \Pro \left ( e^{\xi \inf_{[0,1]^2} \phi_{0,k}}  \left( \min_{P \in \mc{P}_{k+2}^1, 1\leq i \leq 4}  2^k L^{(k,n)}(R_i^S(P))\right) \leq 2^{-k \alpha} \lambda_n 2^k   e^{-\xi s}   \right) \\
& \leq \Pro \left(  \sup_{[0,1]^2} |\phi_{0,k}| \geq k \log 4 + k \delta \log 2  + s/2  \right)  + \Pro \left (\min_{P \in \mc{P}_{k+2}^1, 1\leq i \leq 4}  L^{(n-k)}(R_i^S(P)) \leq \lambda_{n-k}   2^{-k \delta \xi} e^{C \sqrt{k}} e^{-\xi s/2}   \right)  \\
& \leq C e^{-ck} e^{-cs},
\end{align*}
where we used in the last inequality the supremum bounds \eqref{eq:MaxBoundTail} and the left tail estimate \eqref{eq:LowerTailsPhi}.

\textit{Step 2:} Finally, we control \eqref{eq:FstPart}.  We write
\begin{align*}
\Pro(  \exists x,x' :   |x-x'|  <  2^{-n}  ,   d_{0,n}(x,x') \leq e^{-\xi s} |x-x'|^{\alpha}) & \leq  \Pro \left( \inf_{|x-x'| \leq 2^{-n}} \frac{d_{0,n}(x,x')}{|x-x'|^\alpha} \leq e^{-\xi s} \right) \\
& \leq \Pro \left( \lambda_n^{-1} e^{\xi \inf_{[0,1]^2} \phi_{0,n}}  \inf_{|x-x'| \leq 2^{-n}} |x-x'|^{1-\alpha} \leq e^{-\xi s} \right).
\end{align*}
We recall that $\alpha > \xi Q + 2\xi$, and in particular $\alpha > 1$: indeed, $ 1-\xi Q \leq 2 \xi$ follows from a  comparison with the infimum of the field. In this case, $\inf_{|x-x'| \leq 2^{-n}} |x-x'|^{1-\alpha} = 2^{-n(1-\alpha)},$ and by Proposition \ref{Prop:WeakMul},
$$
2^{-n(1-\alpha)} \lambda_n^{-1} \geq 2^{-n(1-\alpha)} 2^{n(1 - \xi Q)} e^{-C \sqrt{n}} = 2^{n (\alpha-\xi Q)} e^{-C \sqrt{n}}
$$
Therefore, since $\alpha - \xi Q = 2 \xi + \delta \xi$ for some $\delta > 0$, we have for $n$ large that
\begin{align*}
\Pro \left( \lambda_n^{-1} e^{\xi \inf_{[0,1]^2} \phi_{0,n}}  \inf_{|x-x'| \leq 2^{-n}} |x-x'|^{1-\alpha} \leq e^{-\xi s} \right)  \leq \Pro \left( \sup_{[0,1]^2} | \phi_{0,n} | \geq n \log 4 + n  \frac{\delta}{2} \log 2  + s \right)
\end{align*}
Using \eqref{eq:MaxBoundTail} completes the proof. 
\end{proof}

\section{Appendix}

\subsection{Comparison with the GFF mollified by the heat kernel}

\label{Sec:HeatKernel}

Let $h$ be a GFF with Dirichlet boundary condition on a domain $D$ and $U \subset \subset D$ be a subdomain of $D$. We recall that we denote by $p_{t}$ the two-dimensional heat kernel at time $t$ i.e. $p_t(x) = \frac{1}{2\pi t} e^{-\frac{|x|^2}{2t}}$.  The goal of this section is to obtain a uniform estimate to conclude on the tightness of the renormalized metric associated to $p_{\frac{t}{2}} \ast h$ assuming the one associated to $\phi_{\sqrt{t}}$. In particular, the second assertion of Theorem \ref{thm:MainTheorem} is a corollary of the following proposition. 

\begin{Prop}
\label{Prop:GffHT} There exist constants $C, c > 0$ such that for all $t \in (0,1/2)$, there is a coupling of $h$ and $\varphi_t \overset{(d)}{=} \phi_{\sqrt{t}} $ such that  for all $x \geq 0$, we have
$$
\Pro \left( \norme{ \varphi_t - p_{\frac{t}{2}} \ast h }_{U} \geq x \right) \leq C e^{-cx^2}.
$$
\end{Prop}

\paragraph{Mollification of the GFF by the heat kernel.} 
The covariance of the Gaussian field $p_{\frac{t}{2}} \ast h$ is given for $x, x' \in U$ by
$$
\E \left( p_{\frac{t}{2}} * h(x)  \ p_{\frac{t}{2}} * h(x') \right) = \int_{D} \int_D p_{\frac{t}{2}}(x-y) G_D(y,y') p_{\frac{t}{2}}(y'-x') dy dy',
$$
where $G_D$ is the Green function associated to the Laplacian operator on $D$. For an open set $A$, we denote by $p_{t}^A(x,y)$ the transition probability density of a Brownian motion killed upon exiting $A$. 

\paragraph{White noise representation.} Take a space-time white noise $W$ and define the field $\eta_t$ on $U$ by
\begin{equation}
\label{Def:eta}
\eta_t(x) := \int_0^{\infty} \int_{D} p_{\frac{t}{2}} * p_{\frac{s}{2}}^D(x,y) W(dy,ds) \quad \text{where} \quad p_{\frac{t}{2}} * p_{\frac{s}{2}}^D(x,y) := \int_{D} p_{\frac{t}{2}}(x-y') p_{\frac{s}{2}}^D(y',y) dy',
\end{equation}
so that $( \eta_t(x) )_{x \in U} \overset{(d)}{=} (p_{\frac{t}{2}} * h (x))_{x \in U}$. Indeed, by Fubini, we have
\begin{align*}
\E(\eta_t(x) \eta_t(x')) & =  \int_0^{\infty} \int_{D} p_{\frac{t}{2}} * p_{\frac{s}{2}}^D(x,y) \  p_{\frac{t}{2}} * p_{\frac{s}{2}}^D(x',y) dy ds \\
&  = \int_{0}^{\infty} \int_D \int_D \int_D p_{\frac{t}{2}}(x-y') p_{\frac{s}{2}}^D(y',y) \ p_{\frac{t}{2}}(x'-y'') \ p_{\frac{s}{2}}^D(y'',y) dy dy' dy'' ds \\
& = \int_D \int_D  p_{\frac{t}{2}}(x-y')  \left( \int_{0}^{\infty}  \int_D p_{\frac{s}{2}}^D(y',y) p_{\frac{s}{2}}^D(y,y'') dy ds \right) p_{\frac{t}{2}}(x'-y'') dy' dy'' \\
& = \int_{D} \int_D p_{\frac{t}{2}}(x-y') G_D(y',y'') p_{\frac{t}{2}}(y''-x') dy' dy''.
\end{align*}

\paragraph{Coupling.} Note that for $t \in (0,1/2)$ $\phi_{\sqrt{t}}(x) = \int_{t}^{1} \int_{\R^2} p_{\frac{s}{2}}(x-y) W(dy,ds)  \overset{(d)}{=} \varphi_t(x)$, where we set
\begin{align*}
\varphi_t(x) := \int_{0}^{1-t} \int_{\R^2}  p_{\frac{t+s}{2}}(x-y) W(dy,ds).
\end{align*}
Furthermore, we can decompose $\varphi_t(x)  = \varphi_t^1(x) + \varphi_t^2(x)$, where
\begin{align}
\label{Def:phi1}
\varphi_t^1(x) :=  \int_{0}^{1-t} \int_{D} p_{\frac{t+s}{2}}(x-y) W(dy,ds); \\
\varphi_t^2(x) := \int_{0}^{1-t} \int_{D^c} p_{\frac{t+s}{2}}(x-y) W(dy,ds).
\end{align}
Recalling the definition of $\eta$ in \eqref{Def:eta}, we introduce $\eta_t^1$ and $\eta_t^2$ so that
\begin{equation}
\label{DefEtai}
\eta_t(x)  =  \int_0^{1-t} \int_{D} p_{\frac{t}{2}} * p_{\frac{s}{2}}^D(x,y) W(dy,ds) +  \int_{1-t}^{\infty} \int_{D} p_{\frac{t}{2}} * p_{\frac{s}{2}}^D(x,y) W(dy,ds) =: \eta_t^1(x) + \eta_t^2(x).
\end{equation}
Therefore, under this coupling (viz. using the same white noise $W$), we have
\begin{equation}
\label{eq:Coupling}
\varphi_t^1(x) - \eta_t^1(x) = \int_0^{1-t} \int_D \left( p_{\frac{t+s}{2}}(x-y) - p_{\frac{t}{2}} * p_{\frac{s}{2}}^D(x,y) \right) W(dy,ds).
\end{equation}

\paragraph{Comparison between kernels.} We will consider $x,y \in U$, subdomain of $D$. Set $d := d(U,D^c) >0$.
$$
p_{\frac{t}{2}} * p_{\frac{s}{2}}^D(x,y) := \int_{D} p_{\frac{t}{2}}(x-y') p_{\frac{s}{2}}^D(y',y) dy' = \int_{D} p_{\frac{t}{2}}(x-y') p_{\frac{s}{2}}(y'-y) q_{\frac{s}{2}}^D(y',y)  dy',
$$
where $q_t^D(x,x')$ is the probability that a Brownian bridge between $x$ and $x'$ with lifetime $t$ stays in $D$. Therefore, using Chapman-Kolmogorov,
$$
p_{\frac{t}{2}} * p_{\frac{s}{2}}^D(x,y) - p_{\frac{t+s}{2}}(x,y) =  - \int_{D^c} p_{\frac{t}{2}}(x-y') p_{\frac{s}{2}}(y'-y)  dy' + \int_{D} p_{\frac{t}{2}}(x-y') p_{\frac{s}{2}}(y'-y) (q_{\frac{s}{2}}^D(y',y) - 1)  dy'.
$$
Note that the first term can be bounded by using that $|y-y'| \geq d$ for $y \in U$ and $y' \in D^c$. For the second term, we can split the integral over $D$ in two parts: one over the $\eps$-neighborhood of $\partial D$  (within $D$), denoted by $(\partial D)^{\eps}$, and one over its complement. To give an upper bound on the first, we use that for $y \in U$ and $y' \in (\partial D)^{\eps}$, $| y - y' | \geq d(U, (\partial D)^{\eps})$. Finally, we bound the second part by using a uniform estimate on the probability that a Brownian bridge between a point in $U$ and a point $D \setminus (\partial D)^{\eps}$ exits $D$ in time less than $s/2$. (Note that $ 1- q_{\frac{s}{2}}^D(y,y')$ is the probability that a Brownian bridge between $y$ and $y'$ with time length $s/2$ exits $D$.) Therefore, we get that uniformly in $x,y \in U$ and $t$, 
\begin{equation}
\label{eq:KernelComp}
| p_{\frac{t}{2}} * p_{\frac{s}{2}}^D(x,y) - p_{\frac{t+s}{2}}(x,y) | \leq  C e^{-\frac{c}{s}}.
\end{equation}

\paragraph{Comparison between $\varphi_{t}$ and $p_{\frac{t}{2}} * h$.} By the triangle inequality,
\begin{equation}
\label{eq:ComparionHeat}
\norme{ \varphi_t - p_{\frac{t}{2}} \ast h }_{U} \leq \norme{ \varphi_t^1 - \eta_t^1 }_{U} + \norme{ \varphi_t^2 }_{U} +  \norme{ \eta_t^2 }_{U}.
\end{equation}
We look for a uniform  right tail estimate (in $t$) of each term in the right-hand side of \eqref{eq:ComparionHeat}. In order to do so, we will use the Kolmogorov continuity criterion. Therefore, we derive below some pointwise and difference estimates.

\textbf{First term.} We derive first a pointwise estimate. For $x \in U$, using the kernel comparison \eqref{eq:KernelComp}, there exists some $C' > 0$ such that, uniformly in $t$,
$$
\Var \left( \left(\eta_{t}^1(x) - \varphi_{t}^2(x) \right)^2  \right) = \int_0^{1-t} \int_{D} \left( p_{\frac{t}{2}} * p_{\frac{s}{2}}^D(x,y) -  p_{\frac{t+s}{2}}(x,y)  \right)^2 dy ds \leq C \int_{0}^{1-t} e^{- \frac{c}{s}} ds \leq C'.
$$
We now give a difference estimate: introducing $\Delta_t(x) := \varphi_t^1(x) - \eta_t^1(x)$, for $x,x' \in U$,
$$
\E \left( \left( \Delta_t(x) -\Delta_t(x') \right)^2 \right) = \int_0^{1-t} \int_D \left( \left( p_{\frac{t+s}{2}}(x-y) - p_{\frac{t}{2}} * p_{\frac{s}{2}}^D(x,y) \right)  -  \left( p_{\frac{t+s}{2}}(x'-y) - p_{\frac{t}{2}} * p_{\frac{s}{2}}^D(x',y) \right)  \right)^2 dy ds,
$$
which is uniformly bounded in $t \in (0,1/2)$ by a quantity of size $O(|x-x'|)$. (By splitting the integral at $\sqrt{|x-x'|}$, one can use \eqref{eq:KernelComp} for the small values of $s$ and gradient estimates for both kernels for larger values of $s$.)

\textbf{Second term.} We recall here that $\varphi_t^2(x)$ is defined for $x \in U$ by
$$
\varphi_t^2(x) = \int_{0}^{1-t} \int_{D^c} p_{\frac{t+s}{2}}(x-y) W(dy,ds) \overset{(d)}{=} \int_{t}^{1} \int_{D^c} p_{\frac{s}{2}}(x-y) W(dy,ds).
$$
We have, for $x,x' \in U$, with $d \coloneqq d(U,D^c)$, 
\begin{align*}
\E &\left( \left( \varphi_t^2(x) - \varphi_t^2(x') \right)^2 \right)  \leq \int_{t}^1 \int_{D^c} \left( p_{\frac{s}{2}}(x-y) - p_{\frac{s}{2}}(x'-y) \right)^2 dy ds \\
&  \leq \int_{\sqrt{|x-x'|}}^1 \int_{\R^2 } \left( p_{\frac{s}{2}}(x-y) - p_{\frac{s}{2}}(x'-y) \right)^2 dy ds  + \int_{0}^{\sqrt{|x-x'|}} \int_{D^c} \left( p_{\frac{s}{2}}(x-y) - p_{\frac{s}{2}}(x'-y) \right)^2 dy ds \\
& \leq  2\int_{\sqrt{|x-x'|}}^1   \left(p_{s}(0) - p_{s}(x-x')\right) ds + 4 \int_{0}^{\sqrt{|x-x'|}}p_{\frac{s}{2}}(d) ds \leq C |x-x'|,
\end{align*}
where we use $1-e^{-z} \leq z$ in the last inequality. Similarly, we can prove that there exists $C >0$ independent of $t$ such that $\E(\phi_t(x)^2) \leq C$. 

\textbf{Third term.} We recall here that $\eta_t^2(x)$ is defined for $x \in U$ by $\eta_t^2(x) =  \int_{1-t}^{\infty} \int_{D} p_{\frac{t}{2}} * p_{\frac{s}{2}}^D(x,y) W(dy,ds)$. Similarly, there exists $C >0$ such that for $t \in (0,1/2)$, $x,x' \in U$, we have 
$$
\E \left( \left( \eta_t^2(x)  - \eta_t^2(x')\right)^2  \right) \leq \int_{1/2}^{\infty} \int_D \left(p_{\frac{t}{2}} * p_{\frac{s}{2}}^D(x,y) - p_{\frac{t}{2}} * p_{\frac{s}{2}}^D(x',y)  \right)^2 dy ds \leq C |x-x'|.
$$
Furthermore, the pointwise variance is uniformly bounded.

\textbf{Result.} Altogether, coming back to \eqref{eq:ComparionHeat} and combining Kolmogorov continuity criterion with Fernique's theorem (see Section 1.3 in \cite{Fernique}), we get the following tail estimate on the above coupling: there exist $C,c > 0$ such that for all $t \in (0,1/2)$, $x \geq 0$, we have
$$
\Pro \left( \norme{ \varphi_t - p_{\frac{t}{2}} \ast h }_{U} \geq x \right) \leq C e^{-cx^2}.
$$

\subsection{Approximations for $\delta \in (0,1)$}

We explain here how results obtained along the sequence $\lbrace 2^{-n} : n \geq 0 \rbrace$ can be extended to $\delta \in (0,1)$. For each $\delta \in (0,1)$, let $n \geq 0$ and $r \in [0,1]$ such that $\delta = 2^{-(n+r)}$. Then by decoupling the field $\phi_{0,r}$, using a uniform estimate for $r\in [0,1]$ and a scaling argument, we generalize our previous results obtained along the sequence $2^{-n}$ to $\delta \in (0,1)$.

\paragraph{Decoupling low frequency noise.} Note that there exists $C >0$ such that for $n \geq 0$ and $r \in [0,1]$ we have
\begin{equation}
\label{eq:AprioriMul}
e^{-C} \lambda_n \leq  \lambda_{n+r} \leq \lambda_n e^{C}.
\end{equation}
Indeed, note that a.s. $e^{-\xi \inf_{[0,1]^2}  \phi_{0,r}} L_{1,1}^{(r,n+r)} \leq L_{1,1}^{(n+r)} \leq e^{\xi \sup_{[0,1]^2} \phi_{0,r}} L_{1,1}^{(r,n+r)}$. Furthermore, with high probability $\sup_{[0,1]^2} | \phi_{0,r} | \leq C_r \leq C$. Then, note that $ L_{1,1}^{(r,n+r)} \overset{(d)}{=} 2^{-r} L_{2^{r},2^r}^{(n)}$ and a.s.  $L_{1,2}^{(n)} \leq L_{2^r,2^r}^{(n)} \leq L_{2,1}^{(n)}$. By the tightness result, there exists a constant $C > 0$ such that uniformly in $n$, with high probability, $L_{1,2}^{(n)} \geq e^{-C} \lambda_n$ and $L_{2,1}^{(n)} \leq e^{C} \lambda_n$, therefore, with high probability, $ e^{-C} \lambda_n  \leq L_{1,1}^{(r,n+r)}  \leq e^{C} \lambda_n$, hence \eqref{eq:AprioriMul}.

\paragraph{Weak multiplicativity.}

In this paragraph, we will use the notation $\lambda_{\delta}$ from the introduction. We recall that writing $\lambda_n$ instead of $\lambda_{2^{-n}}$ was an abuse of notation. Now we prove that there exists $C > 0$ such that for $\delta, \delta' \in (0,1)$ we have
\begin{equation}
\label{eq:MulCont}
C^{-1} e^{-C \sqrt{ | \log \delta \vee \delta' |}} \lambda_{\delta} \lambda_{\delta'}  \leq \lambda_{\delta \delta'} \leq  C e^{C \sqrt{ | \log \delta \vee \delta' |}}  \lambda_{\delta} \lambda_{\delta'}.
\end{equation}
Similarly as \eqref{eq:AprioriMul}, there exists $C > 0$ such that for $r, r' \in [0,1]$, $n, n' \geq 0$,
\begin{equation}
\label{eq:S1}
e^{-C} \lambda_{2^{-n-n'}} \leq \lambda_{2^{-n - r - n' - r'}} \leq \lambda_{2^{-n-n'}} e^C.
\end{equation}
For $\delta, \delta' \in (0,1)$, let $n, n' \geq 0$ and $r, r' \in [0,1]$ such that $\delta = 2^{-(n+r)}$, $\delta' = 2^{-(n'+r')}$. Note that $n = [ -\log_2 \delta]$. Using the weak multiplicativity for powers of $2$, we have
\begin{equation}
\label{eq:S2}
e^{- C \sqrt{n \wedge n' }}  \lambda_{2^{-n}} \lambda_{2^{-n'}} \leq \lambda_{2^{-n-n'}} \leq  \lambda_{2^{-n}} \lambda_{2^{-n'}} e^{C \sqrt{n \wedge n' }}.
\end{equation}
Without loss of generality, we consider just the upper bound in \eqref{eq:MulCont}. The lower bound follows along the same lines. By using first \eqref{eq:S1} and then \eqref{eq:S2} we get
$$
\lambda_{\delta \delta'} = \lambda_{2^{-n-r - n' -r'}} \leq \lambda_{2^{-n-n'}} e^C \leq \lambda_{2^{-n}} \lambda_{2^{-n'}} e^{C \sqrt{n \wedge n'}} e^C.
$$
Now, the result follows by using \eqref{eq:AprioriMul}:
$$
\lambda_{2^{-n}} \lambda_{2^{-n'}} e^{C \sqrt{n \wedge n'}} \leq \lambda_{2^{-n-r}} \lambda_{2^{-n'-r'}}  e^{C \sqrt{n+r \wedge n'+r'}} e^{2C} = \lambda_{\delta} \lambda_{\delta'} e^{C \sqrt{\log | \delta \vee \delta'|}} e^{2C}.
$$

\paragraph{Tail estimates and tightness of metrics.} Using the same argument as in the two previous paragraphs and the tail estimates obtained along the sequence $\lbrace 2^{-n} : n \geq 1 \rbrace$, we have the following tail estimates for crossing lengths of the rectangles $[0,a] \times [0,b]$: there exists $c, C >0$ (depending only on $a$, $b$ and $\gamma$) such that for $s > 2$, uniformly in $\delta \in (0,1)$, we have
\begin{align}
\label{eq:RightTails}
\Pro \left( \lambda_{\delta}^{-1} L_{a,b}^{(\delta)} \geq e^{s} \right) \leq C e^{-c \frac{s^2}{\log s}}; \\
\label{eq:LeftTails}
\Pro \left( \lambda_{\delta}^{-1} L_{a,b}^{(\delta)} \leq e^{-s} \right) \leq C e^{-c s^2}.
\end{align}
Furthermore, the sequence of metrics $( \lambda_{\delta}^{-1} e^{\xi \phi_{\delta}} ds)_{\delta \in (0,1)}$ on $[0,1]^2$ is tight.

\paragraph{Acknowledgments.} We would like to thank the referees for many helpful comments which helped to improve the exposition of the manuscript.

\bibliographystyle{habbrv}
\bibliography{biblio}

\end{document}